\newtheorem{theorem}{Theorem}[section]
\newtheorem{corollary}{Corollary}[section]
\newtheorem{lemma}{Lemma}[section]
\newtheorem{definition}{Definition}[section]
\newtheorem{proposition}{Proposition}[section]
\newtheorem{example}{Example}[section]
\newtheorem{examples}{Examples}[subsection]
\newtheorem{remark}{Remark}[section]
\newcommand\blue{\color{blue}}
\tikzstyle{vertex}=[circle, draw, inner sep=0pt, minimum size=6pt]
\newcommand{\vertex}{\node[vertex]}
\title{Drawing Graphs using TikZ in {\LaTeX}}
\definecolor{cadmiumgreen}{rgb}{0.0, 0.42, 0.24}
\newcommand\red{\color{red}}
\begin{document}
\title{ Monge solutions and uniqueness in multi-marginal optimal transport via graph theory\thanks{BP is pleased
		to acknowledge support from Natural Sciences and Engineering Research
		Council of Canada Grant 04658-2018. The work of AVJ was completed in partial fulfillment of the requirements for a doctoral degree in mathematics at the University of Alberta.}}
\author{BRENDAN PASS \thanks{%
		Department of Mathematical and Statistical Sciences, University of Alberta,
		Edmonton, Alberta, Canada pass@ualberta.ca.}\\	 
	\and 
	ADOLFO VARGAS-JIM\'{E}NEZ\thanks{%
		Department of Mathematical and Statistical Sciences, University of Alberta,
		Edmonton, Alberta, Canada vargasji@ualberta.ca.}}
\date{\today}
\maketitle
\begin{abstract}
We study a multi-marginal optimal transport problem with surplus $b(x_{1}, \ldots, x_{m})=\sum_{\{i,j\}\in P} x_{i}\cdot x_{j}$, where $P\subseteq Q:=\{\{i,j\}: i, j \in \{1,2,...m\}, i \neq j\}$. We reformulate this problem by associating each surplus of this type with a graph with $m$ vertices whose set of edges is indexed by $P$.  We then establish uniqueness and Monge solution results for two general classes of surplus functions. Among many other examples, these classes encapsulate  the Gangbo and \'{S}wi\c{e}ch surplus \cite{Gangbo} and the surplus $\sum_{i=1}^{m-1}x_{i}\cdot x_{i+1} + x_{m}\cdot x_{1}$ studied in an earlier work by the present authors \cite{Pass5}.
 \end{abstract}
\section{\textbf{Introduction
 }}
Multi-Marginal optimal transport is the problem of correlating probability measures to maximize a given surplus function. There are two formulations of this problem: the Kantorovich formulation and the Monge formulation. In the Kantorovich formulation, given Borel probability measures $\mu_{i}$ on open bounded sets $X_{i}\subseteq \mathbb{R}^{n}$, with $i=1, \ldots, m$,  and $b$ a real-valued surplus function  on the product space $X_{1}\times  \ldots \times X_{m}$, the goal is to maximize
 \begin{equation}\label{KP}
\displaystyle \int_{X_{1}\times \ldots \times X_{m}} b(x_{1},\ldots, x_{m})d\mu,\tag{KP}
 \end{equation}
 among all Borel probability measures $\mu$ on the product space  $X_{1}\times  \ldots \times X_{m}$  whose marginals are the $\mu_i$; that is, for each fixed $i\in\{1,\ldots,m\}$, $\mu(X_{1}\times\ldots \times X_{i-1}\times A \times X_{i+1}\times \ldots \times X_{m})=\mu_{i}(A)$ for any Borel set $A\subseteq X_{i}$.\par
 In the Monge formulation, one seeks to maximize
 \begin{equation}\label{MP}
 \displaystyle \int_{X_{1}} b(x_{1},T_{2}x_{1},\ldots, T_{m}x_{1})d\mu_{1},\tag{MP}
 \end{equation}
 among all $(m-1)$-tuples of maps $(T_{2}, \ldots, T_{m})$ such that  $(T_{i})_{\sharp} \mu_{1}=\mu_{i}$ for all $i=2, \ldots, m$, where $(T_{i})_{\sharp} \mu_{1}$ denotes the \textit{image measure} of $\mu_{1}$ through $T_{i}$, defined by $(T_{i})_{\sharp} \mu_{1}(A) = \mu_{1}(T_{i}^{-1}(A))$, for any Borel set $A\subseteq X_{i}$. It is well known that problem \eqref{KP} is a relaxation of problem \eqref{MP}, as  for any $(m-1)$-tuple of maps $(T_{2}, \ldots, T_{m})$ satisfying the image measure constraint in \eqref{MP}, we can define $\mu=(Id,T_{2}, \ldots, T_{m})_{\sharp} \mu_{1}$, which satisfies the constraint in \eqref{KP} and 
$$\displaystyle \int_{X_{1}\times \ldots \times X_{m}} b(x_{1},\ldots, x_{m})d\mu=\displaystyle \int_{X_{1}} b(x_{1},T_{2}x_{1},\ldots, T_{m}x_{1})d\mu_{1}.$$
 Under mild conditions, a maximizer to \eqref{KP} exists.\par
When $m=2$, the classical optimal transport problems of Monge and Kantorovich arise in \eqref{KP} and \eqref{MP}. This case  has been widely studied  and it is reasonably well understood; in particular, if $\mu_{1}$ is absolutely continuous with respect to Lebesgue measure $\mathcal{L}^n$ and the \emph{twist} condition holds (for each fixed $x_{1}$, the map $x_{2}\mapsto D_{x_{1}}b(x_{1},x_{2})$ is injective), the Kantorovich solution $\mu$ induces a Monge solution and it is unique \cite{Brenier}\cite{McCann}\cite{Gangbo1}\cite{Gangbo2}. The classical optimal transportation problem has deep connections with many different areas of mathematics, including analysis, probability, PDE and  geometry, and a great wealth  of applications in, for example, economics,  fluid mechanics,  physics, among many other areas \cite{Filippo}\cite{Villani}\cite{Villani2}. 

 A  wide variety of applications has also recently emerged for the $m \geq 3$ case, including, for example,  matching in economics \cite{Carlier}\cite{McCann2}\cite{Pass6},  density functional theory in computation \cite{BBCP2021}\cite{BBCP2022}\cite{Gori}\cite{BCP2018}\cite{Codina}, interpolating among distributions in machine learning and statistics \cite{BentoMi, ZemelPanaretos} (see also \cite{Pass3} for an overview and additional references). However, determining whether solutions to the multi-marginal Kantorovich problem \eqref{KP} are unique and of Monge form 
has  proven much more challenging, as the answer depends on the form of the surplus $b$ in subtle ways which are still not understood. A fairly general condition, called \emph{twist on splitting sets}, together with absolute continuity of $\mu_1$, was shown in \cite{Kim} to guarantee unique Monge solutions, unifying results in \cite{ Carlier03, Gangbo,   Heinich, PassPinamontiVedovato20, KimPass2015, Pass4, Pass6}. However, this condition is cumbersome and difficult to verify for many surpluses, and, as our recent work reveals, there  exist surplus functions outside of this class for which Monge solution and uniqueness results hold (albeit stronger conditions on the marginals are required than for surplus functions which are twisted on splitting sets) \cite{Pass5}. One of our goals here is to shed some light on the general problem of classifying which surplus functions yield unique Monge solutions, by investigating a certain class of surpluses.

One of the best known surplus functions in the multi-marginal setting is the Gangbo and \'{S}wi\c{e}ch surplus function \cite{Gangbo}
\begin{equation}\label{Gangbo cost}
\sum_{\substack{1\leq i<j\leq m}} x_{i}\cdot x_{j}.
\end{equation}
 In their seminal work, they prove that every solution to \eqref{KP} is concentrated on a graph of a measurable map, thus obtaining a unique solution to the Monge-Kantorovich problem (in the subsequently developed terminology of \cite{Kim}, \eqref{Gangbo cost} is twisted on splitting sets). In \cite{AC2011}, Agueh and Carlier proved that solving the multi-marginal Kantorovich problem  with a weighted version of \eqref{Gangbo cost} is equivalent to finding the barycenter of the marginals $\mu_1, \ldots, \mu_m$.
On the other hand, our recent paper \cite{Pass5} focused on \emph{cyclic costs} of the form
	
	\begin{equation}\label{cycle cost}
	\sum_{i=1}^{m-1}x_{i}\cdot x_{i+1} + x_{m}\cdot x_{1},
	\end{equation}
	whose origin lies in the time discretization of Arnold's variational interpretation of the incompressible Euler equation \cite{Arnold66}\cite{Brenier89}; we showed that unique Monge solutions are obtained for $m \leq 4$ (although when $m=4$, $\mu_2$, in addition to $\mu_1$, is required to be absolutely continuous -- in this case \eqref{cycle cost} violates the twist on splitting sets condition from \cite{Kim}), whereas higher dimensional solutions exist when $m \geq 5$.
	In the present work, we encapsulate the surplus functions \eqref{Gangbo cost} and \eqref{cycle cost} by studying a more general form in which arbitrary interaction structures between the variables are permitted.  More precisely, we set
\begin{equation}\label{Main cost}
b(x_{1}, \ldots, x_{m})=\sum_{\{i,j\}\in P} x_{i}\cdot x_{j},
\end{equation}
 where $P\subseteq Q:=\{\{i,j\}: i, j \in \{1,2,...m\}, i \neq j\}$ (note  that \eqref{Main cost} takes shape \eqref{Gangbo cost} when $P=Q$, and shape \eqref{cycle cost} when  $P= \big\{ \{ i,i+1\}: i=1, \ldots, m-1\big \}\cup \big\{ \{1,m\}\big \}$; our main goal is then to identify conditions on $P$ which lead to Monge solutions. For this, we exploit a natural connection to graph theory;
in particular, we associate the surplus function \eqref{Main cost} with a graph whose vertices we label $\{v_1,...,v_m\}$ and whose set of edges is indexed by $P$. For instance, it is evident that surplus \eqref{Gangbo cost} is associated to a complete graph with vertices  $\{v_{1}, \ldots, v_{m}\}$, denoted by $C_{m}$. See figure below for the case $m=7$.
\begin{figure}[H]  
\begin{center}
\begin{tikzpicture}[x=1.5cm, y=1.5cm]
	\vertex (v1) at (51:1) [label=51:$v_1$]{};
	\vertex (v2) at (103:1) [label=103:$v_2$]{};
	\vertex (v3) at (154:1) [label=154:$v_3$]{};
	\vertex (v4) at (206:1) [label=206:$v_4$]{};
	\vertex (v5) at (257:1) [label=257:$v_5$]{};
	\vertex (v6) at (309:1) [label=309:$v_6$]{};
	\vertex (v7) at (360:1) [label=360:$v_7$]{};
	\path 
		
		(v1) edge (v6)
		(v1) edge (v4)
		(v6) edge (v7)
		(v6) edge (v4)
		(v6) edge (v2)
		(v6) edge (v3)
		(v7) edge (v1)
		(v7) edge (v2)
		(v7) edge (v4)
		(v7) edge (v3)
		(v7) edge (v5)
		(v5) edge (v1)
		(v5) edge (v3)
		(v5) edge (v4)
		(v5) edge (v2)
		(v4) edge (v2)
		(v4) edge (v3)
		(v2) edge (v3)
		(v1) edge (v2)
		(v1) edge (v3)
		(v5) edge (v6)
		;
\end{tikzpicture}
\caption{$C_{m}$ \text{with} $m=7$.} \label{fig:M1}  
\end{center}
\end{figure}
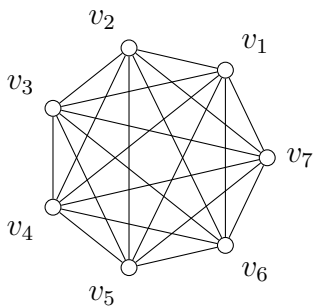 
 In this setting every subgraph with $m$ vertices $G$ of $C_{m}$ is associated to a surplus $\sum_{\{v_i,v_j\} \in E(G)} x_i\cdot x_j$, where $$E(G) =\{\{v_i,v_j\}: \text{ }G \text{ has an edge between } v_i\text{ and }v_j, v_i\neq v_j\}.$$ 
For instance, the "border" of $C_{m}$, that is, the cycle graph with vertex sequence $(v_1,\ldots, v_m,v_1)$ (see definition in Subsection 2.2 and Figure 2-b for the case $m=7$) is associated to  surplus \eqref{cycle cost}. Hence, according to our work in \cite{Pass5}, we can conclude that if $m\geqslant 4$, the only cycle graph providing unique Monge solution is the graph cycle with $m=4$ (under regularity conditions of the first and fourth marginal).  See Figure 2-a below. 
\begin{figure}[H]   
\begin{center}
\begin{subfigure}[b]{0.4\linewidth}
\begin{tikzpicture}[x=1.5cm, y=1.5cm]
	\vertex (v1) at (90:1) [label=90:$v_{1}$]{};
	\vertex (v2) at (180:1) [label=180:$v_{2}$]{};
	\vertex (v3) at (270:1) [label=270:$v_{3}$]{};
	\vertex (v4) at (360:1) [label=360:$v_{4}$]{};
	\path 
		(v1) edge (v2)
		(v2) edge (v3)
		(v3) edge (v4)
		(v4) edge (v1)
		;	
\end{tikzpicture}
\caption{Graph cycle with $m=4$.} \label{Graph cycle.}
  \end{subfigure}
  \qquad \qquad
\begin{subfigure}[b]{0.4\linewidth}
\begin{tikzpicture}[x=1.5cm, y=1.5cm]
	\vertex (v1) at (51:1) [label=51:$v_1$]{};
	\vertex (v2) at (103:1) [label=103:$v_2$]{};
	\vertex (v3) at (154:1) [label=154:$v_3$]{};
	\vertex (v4) at (206:1) [label=206:$v_4$]{};
	\vertex (v5) at (257:1) [label=257:$v_5$]{};
	\vertex (v6) at (309:1) [label=309:$v_6$]{};
	\vertex (v7) at (360:1) [label=360:$v_7$]{};
	\path 
		(v6) edge (v7)
		(v7) edge (v1)
		(v5) edge (v4)
		(v4) edge (v3)
		(v2) edge (v3)
		(v1) edge (v2)
		(v5) edge (v6)
		;
	
\end{tikzpicture}
\caption{Cycle graph with $m=7$.} \label{fig:M2}
\end{subfigure}
\caption{}
\end{center}
\end{figure}

The connection between multi-marginal surplus functions and  graphs described above recently appeared in a computational setting in \cite{Haasler2}, where a regularized (through an entropy term) multi-marginal optimal transport problem with surplus associated to a tree was studied.  
 Although the scope of that work is restricted to a more basic graph structure (only trees were considered), the edges $\{v_i, v_j\}$ are associated to more general symmetric surplus $c_{ij}(x_i,x_j)$. Also, \cite{Haasler1}  established an equivalence of the regularized  multi-marginal optimal transport  and the inference problem for a probabilistic graphical model when both problems are associated to a common graph structure. On the other hand,  the same relationship was noted in \cite{Eckstein}, where connectedness of the graph played an important role in solving a  
  one dimensional multi-marginal martingale optimal transport problem under various assumptions; see Theorem 5.3 in \cite{Eckstein}.

 As we shall see in sections \ref{Section 3} and  \ref{Section 4},  our main results (Theorem \ref{Theorem 2} and Theorem \ref{Theorem 1}, as well as the related Propositions \ref{Proposition 2} and \ref{Proposition 3} ), provide a broad class of graphs providing unique Monge solutions; some of these are classical, well known graphs, whereas others are less standard and more exotic.  In particular, we highlight in Corollary \ref{cor: one missing edge} a special subclass of graphs encompassed by our theory, offering a generalization of the  Gangbo and \'{S}wi\c{e}ch result which we find conceptually appealing: the class in which each vertex is connected to all,  except at most one, of the other vertices.   Generally speaking, the graphs for which we establish Monge solution results come in two complementary classes; one  (see Section \ref{Section 3}) result from the extraction from the complete graph of subgraphs with a particular structure, while the other (see Section  \ref{Section 4}) is obtained by joining complete graphs in a special way.

We would like to emphasize that, in addition to the regularity assumption on $\mu_{1}$, which is standard in optimal transport, many of our results require extra regularity conditions on certain other marginals; these assumptions are not typical in optimal transport theory, but are necessary in our setting, since many surplus functions counterexamples to Monge solutions and uniqueness exist in their absence (see the second assertion of Proposition \ref{prop: non ToSS}).  Note that these examples confirm that the framework developed here reaches well beyond the twist on splitting sets theory, the most general currently known condition implying Monge solution and uniqueness results for multi-marginal problems; indeed, Proposition \ref{prop: non ToSS} verifies that the twist on splitting sets condition is violated by a wide variety of surplus functions, many of which fall within the scope of either Theorem \ref{Theorem 2} or the results in section 4 (Theorem \ref{Theorem 1} and Propositions \ref{Proposition 2} and \ref{Proposition 3}).

 In the next section, we recall and formulate some definitions and concepts from the theory of optimal transportation and graph theory, as well as establish some preliminary results connecting these two areas, one of which in particular will be used later on the paper.
 In section 3, we establish and prove our first main result and provide some examples. In section 4 we state and prove our second main result and provide examples. The short fifth section contains a (standard) proof of the fact that when every solution is of Monge type, as we prove for the graphs considered in sections 3 and 4, the solution must in fact be unique. The final section is reserved for discussion, including examples of graphs which fall outside the scope of the results proved in this paper, and for which the Monge solution and uniqueness questions therefore remain open.
\section{\textbf{Definitions and preliminaries
 }}
In this section, we formulate the main concepts used in this work.
\subsection{The dual problem}
For the surplus function \eqref{Main cost}, set
\begin{equation*}\label{Dp}
\mathcal{U}=\left\lbrace (u_{1}, u_{2},\ldots, u_{m})\in \prod_{i=1}^{m}L^{1}(\mu_{i}):b(x_{1},\ldots,x_{m})\leq \sum_{i=1}^{m}u_{i}(x_{i}), \forall (x_{1},\ldots,x_{m})\in X_{1}\times \ldots \times X_{m}
\right\rbrace.
\end{equation*}

 The dual of \eqref{KP} asks to minimize on $\mathcal{U}$ the map:
 
\begin{equation}\label{DP}
(u_{1}, u_{2},\ldots, u_{m})\mapsto\sum_{i=1}^{m}\int_{X_{i}}u_{i}(x_{i})d\mu_{i}(x_i)\tag{DP}.
  \end{equation}
The following subclass of $\mathcal{U}$ plays a key role in multi-marginal optimal transport theory:
\begin{definition}
An $m$-tuple of functions $(u_{1}, u_{2},\ldots, u_{m})$ is $b$-conjugate if for all i, $$u_{i}(x_{i})= sup_{x_j\in X_{j}, j\neq i}\Big ( b(x_{1},\ldots,x_{m}) -\sum_{j\neq i}u_{j}(x_{j})\Big ). $$
\end{definition}
Since in our setting the surplus $b$ is locally Lipschitz  and semi-convex, it follows that for any $b$-conjugate $m$-tuple $(u_{1}, u_{2},\ldots, u_{m})$, $u_{i}$ is locally Lipschitz  and semi-convex for each $i$ \cite{McCann}.

We now introduce a duality theorem that shows the connection between \eqref{DP} and \eqref{KP}. We refer to \cite{Gangbo}\cite{Pass4} for a proof that solutions can be taken to be  $b$-conjugate; the remaining assertions can be found in \cite{Kellerer}.
\begin{theorem}
Assume $X_{i}$ is compact for every $i$ and let $spt(\mu)$ be the support of $\mu$. Hence, there exists a solution $\mu$ to the Kantorovich problem and a $b$-conjugate solution $(u_{1}, u_{2},\ldots,u_{m})$ to its dual. The minimum and maximum  values in \eqref{DP} and \eqref{KP} respectively agree, and  $\sum_{i=1}^{m}u_{i}(x_{i})=  b(x_{1},\ldots,x_{m})$ for each $(x_{1},\ldots,x_{m})\in spt(\mu)$.
\end{theorem}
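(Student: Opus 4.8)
The statement is the classical Kantorovich duality for a bounded continuous surplus over a product of compact spaces, specialised here to the polynomial \eqref{Main cost}; the only part that is not entirely soft is that a dual optimiser may be taken $b$-conjugate, and this is where the explicit structure of $b$ enters. First I would settle existence on the primal side together with weak duality. The admissible set $\Pi(\mu_1,\dots,\mu_m)$ of probability measures on $X_1\times\cdots\times X_m$ with the prescribed marginals contains $\mu_1\otimes\cdots\otimes\mu_m$, hence is nonempty, and it is a weak-$*$ closed subset of the weak-$*$ compact set of Borel probability measures on the compact space $X_1\times\cdots\times X_m$; since $b$ is continuous, $\pi\mapsto\int b\,d\pi$ is weak-$*$ continuous, so a maximiser $\mu$ of \eqref{KP} exists. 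For any $(u_1,\dots,u_m)\in\mathcal U$ and any admissible $\pi$, integrating the pointwise inequality $b\le\sum_i u_i$ against $\pi$ and using that each $\mu_i$ is a probability gives $\int b\,d\pi\le\sum_i\int u_i\,d\mu_i$, so $\sup\eqref{KP}\le\inf\eqref{DP}$.

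For the absence of a duality gap I would invoke the Fenchel--Rockafellar theorem on $E=C(X_1\times\cdots\times X_m)$, applied to the convex functionals $\Theta_1(\varphi)=0$ if $\varphi\ge b$ on all of $X_1\times\cdots\times X_m$ and $+\infty$ otherwise, and $\Theta_2(\varphi)=\sum_i\int u_i\,d\mu_i$ if $\varphi(x_1,\dots,x_m)=\sum_i u_i(x_i)$ for some $u_i\in C(X_i)$ (a value independent of the chosen representation because the $\mu_i$ are probabilities) and $+\infty$ otherwise, so that $\inf_\varphi(\Theta_1+\Theta_2)$ is precisely $\inf\eqref{DP}$. At the constant $\varphi_0\equiv\|b\|_\infty+1$ one has $\Theta_1(\varphi_0)=0$ with $\Theta_1$ continuous there and $\Theta_2(\varphi_0)<\infty$, so the hypotheses hold. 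A direct computation of the Legendre transforms, identifying $E^*$ with the signed Radon measures, shows that $\Theta_1^*(-\pi)$ equals $-\int b\,d\pi$ when $\pi\ge0$ and $+\infty$ otherwise, while $\Theta_2^*(\pi)$ equals $0$ when $\pi$ has marginals $\mu_1,\dots,\mu_m$ and $+\infty$ otherwise; the theorem then gives $\inf\eqref{DP}=-\min_\pi\big(\Theta_1^*(-\pi)+\Theta_2^*(\pi)\big)=\max\{\int b\,d\pi:\pi\in\Pi(\mu_1,\dots,\mu_m)\}=\sup\eqref{KP}$. (Alternatively one may quote the measure-theoretic duality theorem of \cite{Kellerer} directly.)

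What remains is the construction of a $b$-conjugate minimiser of \eqref{DP}, and this is the step I expect to be the main obstacle. Starting from a minimising sequence $(u_1^k,\dots,u_m^k)$ for \eqref{DP}, I would apply to it, successively in the variables $1,2,\dots,m$, the conjugation operator $u_i\mapsto\sup_{x_j,\,j\ne i}\big(b-\sum_{j\ne i}u_j\big)$; each replacement keeps the tuple in $\mathcal U$ and does not increase $\int u_i\,d\mu_i$, and after one full sweep the tuple is $b$-conjugate while the objective still tends to $\inf\eqref{DP}$. Since $b$ is a polynomial, hence Lipschitz and semiconvex on the bounded sets $X_i$, these conjugates are Lipschitz and semiconvex with constants controlled by $b$ and $\max_i\mathrm{diam}(X_i)$ (precisely the regularity recorded after the definition above, following \cite{McCann}); redistributing additive constants among the $u_i$ makes them uniformly bounded as well, so Arzel\`a--Ascoli extracts a uniformly convergent subsequence whose limit $(u_1,\dots,u_m)$ lies in $\mathcal U$, is $b$-conjugate (the defining suprema pass to uniform limits by uniform continuity of $b$), and attains $\inf\eqref{DP}$. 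The delicate points are the uniform Lipschitz and semiconvexity estimates and the normalisation of constants needed for Arzel\`a--Ascoli; the rest is bookkeeping. Finally, combining the two optimisers, no gap gives $\int b\,d\mu=\sum_i\int u_i\,d\mu_i=\int\big(\sum_i u_i(x_i)\big)\,d\mu$, hence $\int\big(\sum_i u_i(x_i)-b(x_1,\dots,x_m)\big)\,d\mu=0$ with a continuous nonnegative integrand, which therefore vanishes on $\mathrm{spt}(\mu)$ — the last assertion.
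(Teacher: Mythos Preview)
The paper does not give its own proof of this theorem: it cites \cite{Kellerer} for primal existence, absence of a duality gap, and the equality $\sum_i u_i=b$ on $\mathrm{spt}(\mu)$, and \cite{Gangbo}, \cite{Pass4} for the existence of a $b$-conjugate dual optimiser. Your sketch is precisely the standard argument underlying those references---weak-$*$ compactness for the primal, Fenchel--Rockafellar (or Kellerer directly) for the gap, sequential conjugation followed by Arzel\`a--Ascoli for the $b$-conjugate dual solution, and integration of the nonnegative continuous defect $\sum_i u_i-b$ against $\mu$ for the last assertion---so there is nothing further to compare: you have supplied what the paper outsources.
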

\subsection{Some graph theory}
First, let us recall some definitions from Graph Theory. An \textit{undirected simple graph } $G$ is an ordered pair $(V(G), E(G))$, consisting of a finite \textit{set of vertices} $V(G)$ and a \textit{set of edges} $E(G)\subseteq\left\lbrace \{v,w\}:\text{ $v,w \in V(G)$ and $v\neq w$} \right\rbrace$. Throughout this work, every graph $G$ is an undirected simple graph. A \textit{trail} is a finite sequence $\left\lbrace\{v_{i_1}, v_{i_2}\}, \{v_{i_2}, v_{i_3}\},\ldots, \{v_{i_l}, v_{i_{l+1}}\}\right\rbrace $ of pairwise distinct edges which joins a sequence of vertices. A \textit{path} is a trail in which all vertices are distinct: $v_{i_{j}}\neq v_{i_k}$ for all $j \neq k$.  A \textit{cycle graph} is a trail in which the first and last vertex are the only one repeated: $\left\lbrace\{v_{i_1}, v_{i_2}\}, \{v_{i_2}, v_{i_3}\},\ldots, \{v_{i_l}, v_{i_{l+1}}\}\right\rbrace, $  where $\left\lbrace\{v_{i_1}, v_{i_2}\}, \{v_{i_2}, v_{i_3}\},\ldots, \{v_{i_{l-1}}, v_{i_{l}}\}\right\rbrace $  is a path and $ v_{i_{l+1}}=v_{i_1}$. A \textit{tree} is a graph where any two distinct vertices are connected by a unique path. A graph $G$ is \textit{connected} if for every $v,w \in V(G)$ there exists a path in the graph joining them.  We will denote by $I(V(G))$ the \textit{set of indices }of $V(G)$ (that is, for $V(G) =\{v_1,...,v_m\}$, $I(V(G))=\{1,2,...,m\}$) and $|V(G)|$ the cardinality of $V(G)$.
\par
A \textit{subgraph}  $S$ of a graph $G$  is a graph whose sets of vertices and edges are subsets of $V(G)$ and $E(G)$ respectively. In this case, we call the graph $G\setminus S:= (V(G), E(G)\setminus E(S))$ the \textit{extraction} of $S$ from $G$. Note that if $G$ is complete and $V(G) =V(S)$, $G\setminus S$ coincides with the complement of $S$;  that is, $G\setminus S=\left( V(S), E(S^{c})\right)$, where $E(S^{c}):= \left\lbrace \{v,w\}:v,w\in V(S)\;\; and \;\;\{v,w\}\notin E(S)\right\rbrace.$
\par
 Given $v,w\in V(G)$, $v$ and $w$ are called \textit{adjacent} if $\{v,w\}\in E(G)$. The \textit{ open neighborhood} of a vertex $v$, denoted $N_{G}(v)$ (or simply $N(v)$ if there is not danger of confusion), is the set of vertices that are adjacent to $v$; that is,  $$N(v)= \big\{w\in V(G):   \{v,w\}\in E(G) \big\}.$$ 
 The \textit{closed neighborhood} of a vertex $v$, denoted $\overline{N}_{G}(v)$(or simply $\overline{N}(v)$), is the set $N(v)\cup \{v\}$.\\
 A graph $G$ is \emph{complete} if $N(v)=V(G)\setminus \{v\}$ for every $v\in V(G)$. A \textit{clique} $\tilde G =(V(\tilde G), E(\tilde G))$ of a graph  $G=(V(G), E(G))$ is a complete subgraph of $G$; that is,  $\tilde G$ is a subgraph of $G$ and satisfies $N_{\tilde G }(v)=V(\tilde G )\setminus \{v\}$ for every $v\in V(\tilde G )$ . A clique is \emph{maximal} if it is not a proper subgraph of any other clique of $G$.

 The \textit{union} of two given graphs $G_{1}$ and $G_{2}$ (denoted by $G_{1}\cup G_{2}$) is the graph with set of vertices $V(G_{1})\cup V(G_{2})$ and  edges $E(G_{1})\cup E(G_{2})$. A \textit{complete $k$-partite graph} $G$ is a graph whose set of vertices $V(G)$, can be partitioned into $k$ subsets $V_{1}, V_{2},\ldots, V_{k}$ such that for every $v\in V_{j}$, $N(v)=\bigcup_{\underset{\alpha\neq j}{\alpha=1}}^{k}V_{\alpha}$ for any fixed $j\in \{1, \ldots, k\}$.  A complete $k$-partite graph $G$ is denoted as $K_{m_{1},\ldots,m_{k}}$, where $|V_{j}|=m_{j}$ for every $j\in \{1, \ldots, k\}$.
\begin{remark}
Note that the definition of a graph implies $v\notin N(v)$ for every  $v\in V(G)$.  Furthermore, from now on, if $S$ is a subgraph of $G$ and $v\in V(S)$, we will write $N(v)$ for the open neighborhood of $v$ in $G$; that is, $N(v)=N_{G}(v)$. Similarly, $\overline{N}(v)=\overline{N}_{G}(v)$.
\end{remark}
The next two concepts are used to facilitate the description of our main results, established in Theorem \ref{Theorem 2} and \ref{Theorem 1}.
\begin{definition}\label{Def 1}  
 We say that a subset $A$ of $V(G)$  is the \emph{inner hub} of $G$, if $A=V(S_1)\bigcap V(S_2)$ for any two maximal cliques $S_1$ and $S_2$ of $G$.

\end{definition}
\begin{example}\label{Example 1}
The picture below shows the graph $G:=S_{1}\cup S_{2} \cup S_{3}$, where $S_1, S_2$ and $S_3$ are complete graphs with $V(S_{1})=\{v_{6}, v_{7},v_{8},v_{9}\}$, $V(S_{2})=\{v_{6}, v_{7},v_{8},v_{4},v_{5}\}$ and $V(S_{3})=\{v_{6}, v_{7},v_{8},v_{1},v_{2},v_{3},v_{10}\}$. Clearly, $\{S_{1}, S_{2},S_{3}\}$ is the collection of maximal cliques of $G$, and the inner hub of $G$ is the set formed by the vertices of the triangle colored blue; that is, $A=\{v_{6},v_{7},v_{8}\}$.
\end{example}
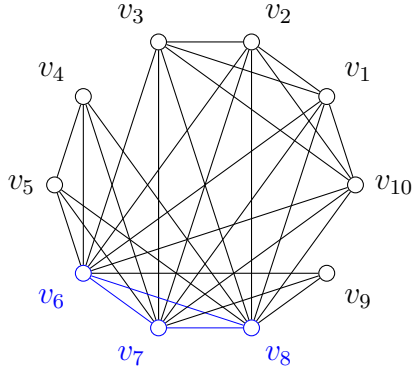
\begin{figure}[H]  
\begin{center}
\begin{tikzpicture}[x=2cm, y=2cm]
	\vertex (v1) at (36:1) [label=36:$v_{1}$]{};
	\vertex (v2) at (72:1) [label=72:$v_{2}$]{};
	\vertex (v3) at (108:1) [label=108:$v_{3}$]{};
	 \vertex (v4) at (144:1) [label=144:$v_{4}$]{};
	\vertex (v5) at (180:1) [label=180:$v_{5}$]{};
	{\blue \vertex (v6) at (216:1) [label=216:$v_{6}$]{};
	\vertex (v7) at (252:1) [label=252:$v_{7}$]{};
	\vertex (v8) at (288:1) [label=288:$v_{8}$]{};}
	\vertex (v9) at (324:1) [label=324:$v_{9}$]{};
	\vertex (v10) at (360:1) [label=360:$v_{10}$]{};
	\path 
		(v1) edge (v2)	
		(v1) edge (v3)
		(v1) edge (v6)
		(v1) edge (v7)
		(v1) edge (v8)
		(v1) edge (v10)
		(v2) edge (v3)
		(v2) edge (v6)
		(v2) edge (v7)
		(v2) edge (v8)
		(v2) edge (v10)
		(v3) edge (v6)
		(v3) edge (v7)
		(v3) edge (v7)
		(v3) edge (v8)
		(v3) edge (v10)
		(v4) edge (v5)
		(v4) edge (v6)
		(v4) edge (v7)
		(v4) edge (v8)
		(v5) edge (v6)
		(v5) edge (v7)
		(v5) edge (v8)
        (v6) edge[ultra thin, draw=blue,-](v7)
		(v6) edge [ultra thin, draw=blue,-](v8)
		(v6) edge (v9)
		(v6) edge (v10)
		(v7) edge [ultra thin, draw=blue,-](v8)
		(v7) edge (v9)
		(v7) edge (v10)
		(v8) edge (v9)
		(v8) edge (v10)	
		;
\end{tikzpicture}
 \caption{Graph $G=S_{1}\cup S_{2} \cup S_{3.}$} \label{fig:M1}  
\end{center}
\end{figure}
Not all graphs have an inner hub. Letting $\{ S_{j}\}_{j=1}^{l}$ be the maximal cliques of a graph $G$, it is clear that  $G =\bigcup_{j=1}^lS_j$; $A$ is the inner hub of $G$ if $ V(S_{j})\cap V(S_{k})=A$ for all $j\neq k$, $j,k\in \{1, \ldots, l\}$.  Note that we allow the inner hub $A$ to be the empty set, which is the case when $G$ is disconnected and each connected component is complete.  At the other extreme, we could have $A=G$, which is the case when $G$ is complete.\\

\begin{definition}\label{Def 2}
Let $\{ S_{1j}\}_{j=1}^{l_{1}}$ and $\{ S_{2j}\}_{j=1}^{l_{2}}$ be  the collection of maximal cliques of given graphs $G_1$ and $G_2$, respectively. Assume that $G_1$ and $G_2$ have inner hubs $A_{1}$ and $A_{2}$, respectively. We say the graphs $G_{1}$ and $G_{2}$ are glued on a clique if:
\begin{enumerate}
\item There are $j\in \{1, \ldots, l_{1}\}$ and $k\in\{1,\ldots, l_{2}\}$ such that $S_{1j}=S_{2k}$.
\item $V(G_{1})\cap V(G_{2})=V(S_{1j})$
\end{enumerate}
\end{definition}
\begin{example}\label{Example 2}
The picture below shows the graph $S_{1}\cup S_{2} \cup S_{3}$, where $\{S_{1}, S_{2}, S_{3}\}$ is the collection of maximal cliques in Example \ref{Example 1}, glued with the graph $S_{1}^{\prime}\cup S_{2}^{\prime} \cup S_{3}^{\prime}\cup S_{4}^{\prime}$ on $S_2$, where $S_{1}^{\prime},S_{2}^{\prime},S_{3}^{\prime}$ and $S_{4}^{\prime}$  are complete graphs with $V(S_{1}^{\prime})=\{v_{4}, v_{5},v_{14}\}$, $V(S_{2}^{\prime})=\{v_{6}, v_{7},v_{8},v_{4},v_{5}\}=V(S_{2})$, $V(S_{3}^{\prime})=\{v_{4}, v_{5},v_{15},v_{16}\}$ and $V(S_{4}^{\prime})=\{v_{4}, v_{5},v_{11},v_{12},v_{13}\}$. Note that the inner hub of $S_{1}^{\prime}\cup S_{2}^{\prime} \cup S_{3}^{\prime}\cup S_{4}^{\prime}$, whose collection of maximal cliques is $\{ S_{j}^{\prime}\}_{j=1}^{4}$,  is formed by the vertices of the edges colored red; that is, $A^{\prime}=\{v_{4},v_{5}\}$.
\end{example}
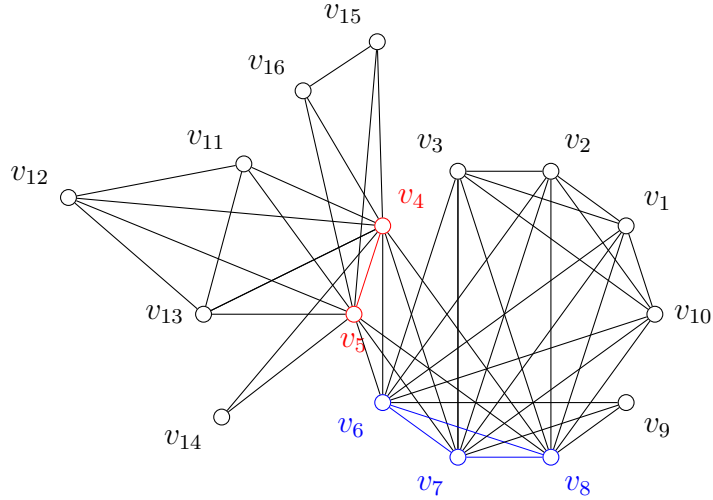
\begin{figure}[H]  
\begin{center}
\begin{tikzpicture}[x=2cm, y=2cm]
	\vertex (v1) at (36:1) [label=36:$v_{1}$]{};
	\vertex (v2) at (72:1) [label=72:$v_{2}$]{};
	\vertex (v3) at (108:1) [label=108:$v_{3}$]{};
	{\red \vertex (v4) at (144:1) [label=60:$v_{4}$]{};}
	\vertex (v15) at (115:2) [label=115:$v_{15}$]{};
   \vertex (v16) at (132:2) [label=132:$v_{16}$]{};
	\vertex (v11) at (150:2) [label=150:$v_{11}$]{};
	\vertex (v12) at (165:3) [label=165:$v_{12}$]{};
	\vertex (v13) at (180:2) [label=180:$v_{13}$]{};
	{\red\vertex (v5) at (180:1) [label=268:$v_{5}$]{};}
	\vertex (v14) at (200:2) [label=200:$v_{14}$]{};
	{\blue \vertex (v6) at (216:1) [label=216:$v_{6}$]{};
	\vertex (v7) at (252:1) [label=252:$v_{7}$]{};
	\vertex (v8) at (288:1) [label=288:$v_{8}$]{};}
	\vertex (v9) at (324:1) [label=324:$v_{9}$]{};
	\vertex (v10) at (360:1) [label=360:$v_{10}$]{};
	\path 
	    (v4) edge (v11)
		(v4) edge (v12)
		(v4) edge (v13)
		(v4) edge (v14)
		(v4) edge (v15)
		(v4) edge (v16)
		(v5) edge (v11)
		(v5) edge (v12)
		(v5) edge (v13)
		(v5) edge (v14)
		(v5) edge (v15)
		(v5) edge (v16)
		(v4) edge (v13)
		(v11) edge (v12)
		(v11) edge (v13)
		(v12) edge (v13)
		(v15) edge (v16)
		(v1) edge (v2)	
		(v1) edge (v3)
		(v1) edge (v6)
		(v1) edge (v7)
		(v1) edge (v8)
		(v1) edge (v10)
		(v2) edge (v3)
		(v2) edge (v6)
		(v2) edge (v7)
		(v2) edge (v8)
		(v2) edge (v10)
		(v3) edge (v6)
		(v3) edge (v7)
		(v3) edge (v7)
		(v3) edge (v8)
		(v3) edge (v10)
		(v4) edge [ultra thin, draw=red,-](v5)
		(v4) edge (v6)
		(v4) edge (v7)
		(v4) edge (v8)
		(v5) edge (v6)
		(v5) edge (v7)
		(v5) edge (v8)
        (v6) edge[ultra thin, draw=blue,-](v7)
		(v6) edge [ultra thin, draw=blue,-](v8)
		(v6) edge (v9)
		(v6) edge (v10)
		(v7) edge [ultra thin, draw=blue,-](v8)
		(v7) edge (v9)
		(v7) edge (v10)
		(v8) edge (v9)
		(v8) edge (v10)	
		;
\end{tikzpicture}
 \caption{Graph $\left(\bigcup_{j=1}^{3}S_{j}\right)\bigcup \left(\bigcup_{j=1}^{4}S_{j}^{\prime}\right)$} \label{fig:M2}  
\end{center}
\end{figure}

\subsection{Preliminary results connecting graph theory and multi-marginal optimal transport}

In this subsection, we establish some initial results connecting solutions of the multi-marginal optimal transport problem \eqref{KP} with cost \eqref{Main cost} and the structure of the corresponding graph.  These include a couple of very basic observations (Proposition \ref{prop: non ToSS}), as well as  a technical lemma which will be used throughout the paper (Lemma \ref{Lemma:1}).
\begin{proposition} \label{prop: non ToSS} Let $G$ be the graph corresponding to some $P \subseteq Q$ and $b$ the suplus \eqref{Main cost}.
\begin{enumerate}
		\item  Assume $G$ is not connected and let $x_i$ be any vertex such that there is no path between $x_1$ and $x_i$, and assume that $\mu_i$ is not a dirac mass. Then there exist non Monge solutions to 
		\eqref{KP}, and, if $\mu_1$ is not a dirac mass, the solution to \eqref{KP} is not unique.
		\item Assume $\{v_1, v_i\}$ is not an edge of $G$ for some $i$, and all the marginals are dirac measures except $\mu_1$ and $\mu_i$, with $\mu_1$ absolutely continuous with respect to Lebesgue measure. Then, there exist solutions of non-Monge form to \eqref{KP} and the solution to \eqref{KP} is not unique.

	\end{enumerate}
	\begin{proof}
		Consider the first assertion. Let $G_1$ be the connected component of $G$ satisfying $v_1\in Z:=V(G_1)$, and $G_2$ the graph union of the other components of $G$, with $W:=V(G_2)$. Then the surplus \eqref{Main cost} takes the separable form:
		$$
		b(x_1,\ldots, x_m) =b_{Z}(x_{Z}) +b_{W}(x_{W}),
		$$
		where we decompose $x=(x_{Z},x_{W})$ into components $x_{Z}$ and $x_{W}$ whose indices of their coordinates lie in $I(Z)$ and $I(W)$, respectively, and $b_{Z}(x_{Z}) =\sum_{\{v_s,v_t\}\in E(G_1)}x_s\cdot x_t$, $b_{W}(x_{W}) =\sum_{\{v_s,v_t\}\in E(G_2)}x_s\cdot x_t$. Solutions to $\eqref{KP}$ are then exactly measures $\gamma$ whose projections $\gamma_Z$ and $\gamma_{W}$ onto the appropriate subspaces are optimal for the multi-marginal optimal transport problem with costs $b_Z$ and $b_{W}$, respectively, and the appropriate marginals.  In particular, the dependence structure between $\gamma_Z$ and $\gamma_W$ is completely arbitrary, and so, if $\mu_i$ is not a dirac mass for some $v_i \in W$, we immediately get the existence of non-Monge solutions (for instance, the product measure  $\gamma_Z \otimes\gamma_{W}$), and if in addition $\mu_1$ is  not a dirac mass, solutions are non-unique. 
		
		Turning to assertion 2, without loss of generality, assume $\{v_1, v_2\}$ is not an edge of $G$. Take $\mu_1$  be absolutely continuous with respect to Lebesgue measure, $\mu_2$ be any measure other than a dirac mass (so that $\mu_2$ charges at least two points) and let all other marginals be dirac masses, $\mu_i=\delta_{\bar x_i}$.  In this case, measures $\gamma$ whose marginals are the $\mu_i$ all take the form
		$\gamma= \sigma(x_1,x_2) \otimes \delta_{\bar x_3}\otimes \ldots\otimes  \delta_{\bar x_m}$, where $\sigma \in P(X_1 \times X_2)$ has marginals $\mu_1$ and $\mu_2$.  For any such $\gamma$, we have
		\begin{eqnarray*}
		\int_{X_1 \times X_2 \times\ldots\times X_m}b(x_1,\ldots,x_m)d\gamma(x_1,\ldots x_m)&=&\int_{X_1 \times X_2}b(x_1,x_2,\bar x_3,\ldots,\bar x_m)d\sigma(x_1,x_2)\\
		&=&\int_{X_1}b_1(x_1,\bar x_3,\ldots,\bar x_m)d\mu_1(x_1)\\
		&& +\int_{X_2}b_2(x_2,\bar x_3,\ldots,\bar x_m)d\mu_2(x_2)\\
		\end{eqnarray*}
	where $b_1(x_1,x_3,\ldots,x_m) =\sum_{\{v_s,v_t\}\in E(G)\; s,t\neq 2}x_s\cdot x_t$ and $b_2(x_2,x_3,\ldots,x_m) =\sum_{s\in I\left(N(v_2)\right)}x_2 \cdot x_s$.  Thus, the Kantorovich functional is independent of $\sigma$, and so any $\sigma$ with marginals $\mu_1$ and $\mu_2$ is optimal.  We conclude that solutions are non-unique, and can be of non-Monge form (as is the case when, for example, $\sigma =\mu_1 \times \mu_2$ is  the product measure).
	\end{proof}
	\end{proposition}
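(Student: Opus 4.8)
The plan is to exploit the additive structure of the surplus \eqref{Main cost}: both missing edges and disconnectedness force the cost to separate into a sum of functions of disjoint blocks of variables, and such a separation decouples the Kantorovich problem \eqref{KP}, leaving the cross-dependence between the blocks entirely unconstrained by optimality. Nothing beyond the existence of optimizers for the relevant subproblems is needed; in particular the duality theorem is not required.

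For the first assertion I would write $V(G)=Z\sqcup W$, where $Z=V(G_1)$ is the vertex set of the connected component containing $v_1$ and $W$ is the union of the remaining components. Since no edge of $G$ joins $Z$ to $W$, the surplus splits as $b(x)=b_Z(x_Z)+b_W(x_W)$, with $b_Z$ and $b_W$ the restrictions of \eqref{Main cost} to the edges inside $Z$ and inside $W$. The key step is then the observation that a measure $\gamma$ with the prescribed marginals is optimal for \eqref{KP} if and only if its projections $\gamma_Z$ onto $\prod_{s\in I(Z)}X_s$ and $\gamma_W$ onto $\prod_{t\in I(W)}X_t$ are optimal for the two subproblems (cost $b_Z$ with marginals $\{\mu_s\}_{s\in I(Z)}$, and cost $b_W$ with marginals $\{\mu_t\}_{t\in I(W)}$): indeed $\int b\,d\gamma=\int b_Z\,d\gamma_Z+\int b_W\,d\gamma_W$, and any admissible pair $\gamma_Z,\gamma_W$ arises as the projections of some admissible $\gamma$, e.g. of the product $\gamma_Z\otimes\gamma_W$. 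Fixing optimizers $\gamma_Z,\gamma_W$ of the subproblems, the product $\gamma_Z\otimes\gamma_W$ is then an optimal plan for \eqref{KP} in which $x_i$ (for $v_i\in W$) is independent of $x_1$; since $\mu_i$ is not a Dirac mass, such a plan cannot be concentrated on the graph of a map over $x_1$ (that would force the transport map in coordinate $i$ to be $\mu_1$-a.e.\ constant, making $\mu_i$ a Dirac mass), so it is non-Monge. If in addition $\mu_1$ is not a Dirac mass then $\gamma_Z$ is not a Dirac mass either, so one can also pair $\gamma_Z$ and $\gamma_W$ by a non-product admissible coupling, which is again optimal, producing a second optimal plan and hence non-uniqueness.

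For the second assertion, after relabelling so that $\{v_1,v_2\}\notin E(G)$, I would note that when $\mu_k=\delta_{\bar x_k}$ for all $k\ge 3$, every admissible $\gamma$ has the form $\gamma=\sigma\otimes\delta_{\bar x_3}\otimes\cdots\otimes\delta_{\bar x_m}$ with $\sigma$ a coupling of $\mu_1$ and $\mu_2$. Substituting these Dirac values into \eqref{Main cost} and using that there is no term $x_1\cdot x_2$ (precisely because $\{v_1,v_2\}$ is not an edge), the integrand $b(x_1,x_2,\bar x_3,\dots,\bar x_m)$ separates as $b_1(x_1)+b_2(x_2)$, where $b_1$ gathers the edges not incident to $v_2$ and $b_2$ gathers the edges incident to $v_2$ (each of which pairs $x_2$ with some $\bar x_s$, $s\neq 1$). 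Hence $\int b\,d\gamma=\int b_1\,d\mu_1+\int b_2\,d\mu_2$ is independent of $\sigma$, so every coupling $\sigma$ of $\mu_1$ and $\mu_2$ is optimal. Since $\mu_1$ and $\mu_2$ each charge at least two points, the product coupling $\sigma=\mu_1\times\mu_2$ gives a non-Monge optimal plan and differs from, say, a monotone coupling, which yields non-uniqueness.

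The arguments are essentially elementary; the only points requiring care are the ``if and only if'' decoupling of optimality for the subproblems in part (1) --- really the assertion that the dependence between the two projections is unconstrained by optimality --- and, in part (2), verifying that substituting the Dirac masses genuinely leaves a surplus separable in $x_1$ and $x_2$, which hinges exactly on the absence of the edge $\{v_1,v_2\}$. Once these are in place, exhibiting the explicit non-Monge and second optimal plans is routine, needing only that the relevant marginals are non-atomic enough to charge two points.
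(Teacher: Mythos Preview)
Your proposal is correct and follows essentially the same approach as the paper: the same $Z/W$ separation and product coupling for part~(1), and the same reduction to a $\sigma$-independent functional via the Dirac substitutions for part~(2). If anything, you supply slightly more detail than the paper does (e.g.\ spelling out why the product plan fails to be Monge, and naming an explicit second coupling for non-uniqueness).
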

	 Let us now establish an immediate consequence of the above proposition. For this, let us invoke the main definitions in \cite{Kim}.
\begin{definition}
A set $S\subseteq \prod_{i=1}^{m}X_i$ is called a $b$-splitting set if there are Borel functions $u_i:X_i \mapsto \mathbb{R}$ such that 
\begin{equation*}
\sum_{i=1}^{m}u_i(x_{i})\geq b(x_1, \ldots,x_m)
\end{equation*}
for every $(x_1, \ldots, x_m)\in \prod_{i=1}^{m}X_i$, and whenever $(x_1, \ldots, x_m)\in S$ equality holds. 
\end{definition}
\begin{definition}\label{Art3:44}
Let $b$ be a continuous semi-convex surplus function. It is called twisted on $b$-splitting sets, whenever for each fixed $x_1\in X_1$ and $b$-splitting set $S\subseteq \{x_1\}\times X_2\times \ldots X_m$, the map 
$$(x_{2}, \ldots, x_{m})\mapsto D_{x_{1}}b(x_{1}^{0},x_2,\ldots, x_{m}) $$ is injective on
the subset of  $S$ where $D_{x_1}b(x_1, x_2, \ldots, x_m)$ exists. 
\end{definition}
\begin{remark}\label{Art3:45}
The main result in \cite{Kim} establishs that if $b$ is twisted on $b$-splitting sets, then every solution to $(\ref{KP})$ is induced by a map, whenever $\mu_1$ is absolutely continuous with respect to local coordinates. 
\end{remark}
\begin{corollary}
Under the hypothesis in any of assertion 1 or assertion 2 of Proposition \ref{prop: non ToSS}, the surplus $b$ is not twisted on $b$-splitting sets.
\begin{proof}
Suppose $b$ is twisted on $b$-splitting sets. From the above remark every solution to (KP) is induced by map; that is, every solution to (KP) is of Monge type. This clearly contradicts Proposition \ref{prop: non ToSS}, completing the proof of the corollary.
\end{proof}   
\end{corollary}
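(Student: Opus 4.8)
The plan is to argue by contradiction, leveraging the main theorem of \cite{Kim} as recorded in Remark \ref{Art3:45}. First I would note that the surplus $b$ in \eqref{Main cost}, being a finite sum of the bilinear maps $(x_i,x_j)\mapsto x_i\cdot x_j$, is smooth and semi-convex, so it falls within the scope of Definition \ref{Art3:44} and Remark \ref{Art3:45}. Then, assuming for contradiction that $b$ is twisted on $b$-splitting sets, Remark \ref{Art3:45} tells us that \emph{for every} choice of marginals with $\mu_1$ absolutely continuous with respect to local coordinates, every solution of \eqref{KP} is induced by a map. The key point is that this is a statement about $b$ (and the domains $X_i$) alone, uniform over all admissible marginals.

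Next I would exhibit a configuration of marginals contradicting this. Under the hypotheses of assertion 2 of Proposition \ref{prop: non ToSS}, such a configuration is already provided with $\mu_1$ absolutely continuous, and the proposition guarantees a non-Monge (indeed non-unique) solution of \eqref{KP}; this directly contradicts the previous paragraph. Under the hypotheses of assertion 1, I would specialize the construction used there by additionally taking $\mu_1$ to be absolutely continuous with respect to Lebesgue measure; this is compatible with the requirement that $\mu_1$ not be a Dirac mass, since any absolutely continuous probability measure on the nonempty open bounded set $X_1$ charges no point. Proposition \ref{prop: non ToSS} then again produces a non-Monge solution of \eqref{KP}, contradicting the conclusion forced by the twist hypothesis. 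In either case the assumption that $b$ is twisted on $b$-splitting sets is untenable, which proves the corollary.

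The argument is essentially immediate once this bookkeeping is in place; I do not anticipate a genuine obstacle. The closest thing to one is reconciling the marginal hypotheses of Proposition \ref{prop: non ToSS} with the absolute-continuity hypothesis required to invoke \cite{Kim} — handled trivially by the remark that absolutely continuous measures are not Dirac masses — together with the (equally trivial) verification that \eqref{Main cost} is continuous and semi-convex, so that the twist-on-splitting-sets machinery applies to it at all.
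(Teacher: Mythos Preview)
Your proof is correct and follows essentially the same contradiction argument as the paper's proof, invoking Remark \ref{Art3:45} and Proposition \ref{prop: non ToSS}. You include a bit more bookkeeping than the paper does (verifying semi-convexity of $b$ and ensuring $\mu_1$ can be taken absolutely continuous so that Remark \ref{Art3:45} applies), but the underlying idea is identical.
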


Clearly, in light of the first assertion there is no hope of obtaining Monge solution results for disconnected graphs (except in the trivial case when each $\mu_i$ with $x_i$ not connected to $\mu_1$ is a dirac mass, in which case the problem reduces to a problem on the connected component containing $x_1$.)  We therefore will focus on connected graphs throughout this paper.  On the other hand, our work in \cite{Pass5} suggests that at least for some surplus functions where $\{v_1, v_i\}$ is not an edge of $G$ for some $i$, unique Monge solutions may exist when extra regularity conditions on the marginals are imposed (even though the twist on splitting sets condition fails).   Our results in the following sections confirm that this is indeed the case.

The proofs of our main results will require the following technical lemma.
\begin{lemma}\label{Lemma:1}
Let $G$ be a graph, with $V(G)=\{v_{1}, \ldots, v_{m}\}$, and  $b(x_1,\ldots,x_m) = \sum_{\{v_s,v_t\}\in E(G)}x_s \cdot x_t$ be the surplus associated to $G$. 
Let $(u_{1}, \ldots, u_{m})$ a $b$-conjugate $m$-tuple. Set $$ W:=\left\lbrace (x_{1}, \ldots, x_{m})\in X_{1}\times\ldots \times X_{m}: \sum_{i=1}^{m}u_{i}(x_{i})=  b(x_{1},\ldots,x_{m})\right\rbrace.$$ Fix $x_{1}^{0}\in X_{1}$ and for convenience of notation set $x_{1}^{1}=x_{1}^{2}=x_{1}^{0}$.  Let  $(x_{1}^{1}, x_{2}^{1}, \ldots, x_{m}^{1}), (x_{1}^{2}, x_{2}^{2}, \ldots, x_{m}^{2}) \in W$.
\begin{enumerate}
\item \label{Part 1} Assume there are sets  $V_{1}, V_{2}\subseteq V(G)$ such that $N(v_s)=V_{2}$  for every $s\in I(V_{1})$, and set 
\[y_{s}= \begin{cases} 
      x_{s}^{1} & \text{if}\quad s\in \{1, \ldots, m\}\setminus I(V_{1}) \\
      x_{s}^{2} & \text{if}\quad  s\in I(V_{1}). \\

   \end{cases}
\]
If 
\begin{equation}\label{eqn:1.2-21}
\sum_{{s\in I(V_{2})}}x_{s}^{1}=\sum_{{s\in  I(V_{2})}}x_{s}^{2},  
\end{equation}
then $y:=(y_{1}, y_{2}, \ldots, y_{m})\in W$.
\item \label{Part 2} For all $t\in \{1,\ldots, m\}$ we have 
\begin{equation}\label{eqn:1.2-5}
\left(x_{t}^{2}-x_{t}^{1}\right)\cdot\sum_{s\in I(N(v_t))}\left(x_{s}^{1}-x_{s}^{2}\right )\leq 0.
\end{equation}
\item \label{Part 4} If there exists $t\in \{1,\ldots, m\}$ such that 
\begin{equation}\label{eqn:1.2-1}
\sum_{s\in I(\overline{N}(v_t))}x_{s}^{1}=\sum_{s\in I(\overline{N}(v_t))}x_{s}^{2}, 
\end{equation}
 then $x_{t}^{1}=x_{t}^{2}$.
 \item \label{Part 5} Assume $x_{p}^{1}=x_{p}^{2}$ and $Du_{p}(x_{p}^{1})$ exists for some $p\in \{1, \ldots, m\}$. 
 \begin{enumerate}
 \item \label{Part a} For every $t\in \{2,\ldots, m\}\setminus \{p\}$ satisfying 
 \begin{equation}\label{eqn:1.2-6}
\overline{N}(v_p)=\overline{N}(v_t), 
 \end{equation}
 we have $x_{t}^{1}=x_{t}^{2}$.
 \item \label{Part b} Assume there are sets $F_{1}, F_{2}, F_{3}$ such that $F_{1}, F_{2}\subseteq N(v_p)$ and $N(v_s)=F_{2}\cup F_{3}$  for every $s\in I(F_{1})$. If $x_{s}^{1}=x_{s}^{2}$ for every $s\in I( N(v_p) \setminus F_{1}\cup F_{2})\cup I(F_{3})$,
then $x_{s}^{1}=x_{s}^{2}$ for every $s\in I(F_{1})$.
 \end{enumerate}

\end{enumerate}
\end{lemma}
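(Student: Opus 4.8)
The entire lemma is driven by one elementary fact, which I would establish first as a sub-claim: if $(x_1,\dots,x_m)\in W$, then for each fixed $t$ the vector $\sum_{s\in I(N(v_t))}x_s$ lies in the superdifferential $\partial^+ u_t(x_t)$. Indeed, by $b$-conjugacy the function $x_t\mapsto u_t(x_t) - x_t\cdot\sum_{s\in I(N(v_t))}x_s$ is $\leq$ the analogous expression in all other variables evaluated at the optimal point; more precisely, for any $z\in X_t$ one has $u_t(z)\geq b(x_1,\dots,z,\dots,x_m)-\sum_{j\neq t}u_j(x_j)$, while $u_t(x_t)=b(x_1,\dots,x_m)-\sum_{j\neq t}u_j(x_j)$ since $(x_1,\dots,x_m)\in W$; subtracting and noting $b(x_1,\dots,z,\dots,x_m)-b(x_1,\dots,x_m)=(z-x_t)\cdot\sum_{s\in I(N(v_t))}x_s$ (the sum of the $x_i\cdot x_j$ over edges is linear in the $t$-th coordinate with exactly this gradient) gives $u_t(z)-u_t(x_t)\geq (z-x_t)\cdot\sum_{s\in I(N(v_t))}x_s$, i.e. the claimed superdifferential statement. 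This is the backbone; everything else is a consequence.

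\textbf{Part \ref{Part 2}} then follows by writing the superdifferential inequality at $(x^1_\cdot)$ with test point $z=x^2_t$ and at $(x^2_\cdot)$ with test point $z=x^1_t$, and adding: the $u_t$ terms cancel and one is left with $(x^2_t-x^1_t)\cdot\sum_{s\in I(N(v_t))}(x^1_s-x^2_s)\leq 0$. \textbf{Part \ref{Part 1}} I would prove directly from optimality: compute $\sum_i u_i(y_i)$ and compare to $b(y)$. Split the edge set $E(G)$ according to whether an edge has $0$, $1$, or $2$ endpoints in $V_1$; edges with no endpoint in $V_1$ contribute as in the $(x^1_\cdot)$ point, edges inside $V_1$ as in $(x^2_\cdot)$, and the cross edges — which by hypothesis $N(v_s)=V_2$ for $s\in I(V_1)$ are exactly the pairs $\{v_s,v_t\}$ with $s\in I(V_1)$, $v_t\in V_2$ — have the form $y_s\cdot y_t = x^2_s\cdot x^1_t$ when $v_t\notin V_1$; using hypothesis \eqref{eqn:1.2-21} one checks these cross terms match what they would be at either labelled point. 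Combining with the two membership statements $(x^1_\cdot),(x^2_\cdot)\in W$ yields $\sum_i u_i(y_i)\leq b(y)$, and since the reverse inequality always holds (membership in $\mathcal U$), equality holds and $y\in W$. The bookkeeping here is the fussiest part, but it is only bookkeeping; I expect this to be the main technical obstacle, in the sense of the step most prone to index errors.

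\textbf{Part \ref{Part 4}} I would deduce from Part \ref{Part 2}: condition \eqref{eqn:1.2-1} says $\sum_{s\in I(\overline N(v_t))}(x^1_s-x^2_s)=0$, i.e. $\sum_{s\in I(N(v_t))}(x^1_s-x^2_s) = -(x^1_t-x^2_t) = x^2_t-x^1_t$. Substituting into \eqref{eqn:1.2-5} gives $|x^2_t-x^1_t|^2\leq 0$, hence $x^1_t=x^2_t$. \textbf{Part \ref{Part a}} is the differentiable refinement: since $x^1_p=x^2_p$ and $Du_p$ exists there, the superdifferential of $u_p$ at that point is the single vector $Du_p(x^1_p)$, so $\sum_{s\in I(N(v_p))}x^1_s = Du_p(x^1_p) = \sum_{s\in I(N(v_p))}x^2_s$; because $\overline N(v_p)=\overline N(v_t)$ and $x^1_p=x^2_p$, this also gives $\sum_{s\in I(\overline N(v_t))}x^1_s=\sum_{s\in I(\overline N(v_t))}x^2_s$, and Part \ref{Part 4} applied at index $t$ finishes it. Finally \textbf{Part \ref{Part b}}: again from differentiability at $p$ we get $\sum_{s\in I(N(v_p))}(x^1_s-x^2_s)=0$; splitting $I(N(v_p))$ into $I(F_1)$, $I(F_2)$ and $I(N(v_p)\setminus(F_1\cup F_2))$ and using the vanishing hypothesis on $I(N(v_p)\setminus(F_1\cup F_2))$, we learn $\sum_{s\in I(F_1)}(x^1_s-x^2_s) = -\sum_{s\in I(F_2)}(x^1_s-x^2_s)$. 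Now apply Part \ref{Part 1} with $V_1 = F_1$ and $V_2=F_2\cup F_3$: the hypothesis $N(v_s)=F_2\cup F_3$ for $s\in I(F_1)$ is exactly what Part \ref{Part 1} needs, and condition \eqref{eqn:1.2-21} reads $\sum_{s\in I(F_2\cup F_3)}x^1_s=\sum_{s\in I(F_2\cup F_3)}x^2_s$, which holds since $x^1_s=x^2_s$ on $I(F_3)$ and on $I(F_2)$ (as $F_2\subseteq N(v_p)\setminus F_1$ — here one must be slightly careful that $F_2$ is among the indices assumed equal; it is, because $I(F_2)\subseteq I(N(v_p)\setminus(F_1\cup F_2))$ is false, so instead one uses the displayed identity together with Part \ref{Part 2} applied coordinatewise). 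I would actually run Part \ref{Part b} by first using Part \ref{Part 1} to produce an auxiliary point $y\in W$ obtained from $(x^1_\cdot)$ by swapping the $F_1$-coordinates to their $(x^2_\cdot)$ values, then comparing $y$ with $(x^1_\cdot)$ via Part \ref{Part 2} at each $t\in I(F_1)$ to force $x^1_s=x^2_s$ there. The only delicate point throughout is keeping straight which index sets are assumed equal and making sure \eqref{eqn:1.2-21} is genuinely available each time Part \ref{Part 1} is invoked.
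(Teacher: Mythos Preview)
Your plan for Parts~\ref{Part 1}--\ref{Part 4} and \ref{Part a} is correct and essentially identical to the paper's argument: the superdifferential inequality you isolate is exactly what the paper uses (phrased there as an Argmax condition), and your deductions of \ref{Part 2}, \ref{Part 4}, \ref{Part a} from it match the paper line for line. Your edge-counting scheme for Part~\ref{Part 1} is a repackaging of the paper's decomposition $b=g+(\sum_{V_2}x_s)\cdot(\sum_{V_1}x_s)$; it works, though note that ``membership of $(x^2_\cdot)$ in $W$'' must be used as an \emph{inequality} (an Argmax statement in the $V_1$-coordinates), not just as an equality.

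There is a genuine gap in Part~\ref{Part b}. You correctly obtain the intermediate identity $\sum_{s\in I(F_2)}x_s^1=\sum_{s\in I(F_2)}x_s^2$ by summing Part~\ref{Part 2} over $t\in I(F_1)$ and using $\sum_{F_1}(x_s^1-x_s^2)=-\sum_{F_2}(x_s^1-x_s^2)$; this matches the paper. But your final step does not work as written. If you apply Part~\ref{Part 1} with $V_1=F_1$ to build an auxiliary $y$ by swapping \emph{all} of the $F_1$-coordinates to their $x^2$ values, then for any $t\in I(F_1)$ one has $N(v_t)=F_2\cup F_3\subseteq\{1,\dots,m\}\setminus I(F_1)$, so $y_s=x_s^1$ for every $s\in I(N(v_t))$; Part~\ref{Part 2} applied to the pair $(x^1,y)$ at such $t$ therefore reads $(x_t^2-x_t^1)\cdot 0\le 0$, which is vacuous. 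Comparing $y$ with $(x^2_\cdot)$ is equally vacuous since $y_t=x_t^2$ for $t\in I(F_1)$. Neither Part~\ref{Part 2} nor Part~\ref{Part 4} can separate individual coordinates here, because you have only secured equality of the \emph{sums} over $F_1$ and $F_2$.

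The missing idea, which the paper supplies, is to swap only $F_1\setminus\{v_t\}$ for a \emph{fixed} $t\in I(F_1)$ (so $y_t=x_t^1$ remains unswapped) and then use the differentiability of $u_p$ a \emph{second} time at the auxiliary point $y$: since $v_p\notin F_1$ one has $y_p=x_p^1=x_p^2$, so $\sum_{s\in I(N(v_p))}y_s=Du_p(y_p)=\sum_{s\in I(N(v_p))}x_s^2$. Expanding both sides over $F_1,F_2,N(v_p)\setminus(F_1\cup F_2)$ and cancelling (using $\sum_{F_2}x_s^1=\sum_{F_2}x_s^2$ and the hypothesis on $N(v_p)\setminus(F_1\cup F_2)$) leaves exactly $x_t^1=x_t^2$.
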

\begin{proof}
Since for every $s \in I(V_{1})$ we have $N(v_s)=V_{2}$, and  $v\notin N(v)$ for all $v\in V(G)$, we get $V_1\cap V_2=\emptyset$. Hence, we can write  $$b(x_{1},\ldots, x_{m})=g(x_{1}, \ldots, x_{m}) + \Big (\sum_{s\in I(V_{2})}x_{s}\Big )\cdot \Big (\sum_{s\in I(V_{1})}x_{s}\Big ), $$ where $g(x_{1}, \ldots, x_{m})$ does not depend on $\{x_{s}\}_{s\in I(V_{1})}$. Hence
$$\{x_{s}^{2}\}_{s\in I(V_{1})}\in  \text{Argmax}\Big\{\{x_{s}\}_{s\in I(V_{1})} \mapsto \Big (\sum_{s\in I(V_{2})}x_{s}^{2}\Big )\cdot \Big (\sum_{s\in I(V_{1})}x_{s}\Big )-\sum_{s\in I(V_{1})}u_{s}(x_{s})\Big\},$$
as  $(x_{1}^{2}, x_{2}^{2}, \ldots, x_{m}^{2})\in W$. Then, If \eqref{eqn:1.2-21} holds we get 
$$\{x_{s}^{2}\}_{s\in I(V_{1})}\in  \text{Argmax}\Big\{\{x_{s}\}_{s\in I(V_{1})} \mapsto \Big (\sum_{s\in I(V_{2})}x_{s}^{1}\Big )\cdot \Big (\sum_{s\in I(V_{1})}x_{s}\Big )-\sum_{s\in I(V_{1})}u_{s}(x_{s})\Big\},$$
which implies  $y\in W$, as $(x_{1}^{1}, x_{2}^{1}, \ldots, x_{m}^{1})\in W$. This completes the proof of part \ref{Part 1}.  Using the arguments of the previous proof, and taking $V_{1}=\{v_{t}\}$ and $V_{2}=N(v_t)$ for any fixed $t\in \{2, \ldots, m\}$, we deduce
$$x_{t}^{2}\in  \text{Argmax}\Big\{x_{t} \mapsto \Big (\sum_{s\in I(N(v_t))}x_{s}^{2}\Big )\cdot x_{t}-u_{t}(x_{t})\Big\}.$$ Similarly,
$$x_{t}^{1}\in  \text{Argmax}\Big\{x_{t} \mapsto \Big (\sum_{s\in I(N(v_t))}x_{s}^{1}\Big )\cdot x_{t}-u_{t}(x_{t})\Big\}.$$
Then,
\begin{equation}\label{eqn:1.2-4}
\Big (\sum_{s\in I(N(v_t))}x_{s}^{2}\Big )\cdot x_{t}^{1}-u_{t}(x_{t}^{1})\leq \Big (\sum_{s\in I(N(v_t))}x_{s}^{2}\Big )\cdot x_{t}^{2}-u_{t}(x_{t}^{2})
\end{equation}
and
\begin{equation}\label{eqn:1.2-3}
\Big (\sum_{s\in I(N(v_t))}x_{s}^{1}\Big )\cdot x_{t}^{2}-u_{t}(x_{t}^{2})\leq \Big (\sum_{s\in I(N(v_t))}x_{s}^{1}\Big )\cdot x_{t}^{1}-u_{t}(x_{t}^{1}).
\end{equation}

Adding \eqref{eqn:1.2-4} and \eqref{eqn:1.2-3}  (and eliminating terms) we obtain inequality \eqref{eqn:1.2-5}, completing the proof of the second part. The proof of part \ref{Part 4} follows immediately from part \ref{Part 2}, as if there exists $t\in \{2,\ldots, m\}$ satisfying \eqref{eqn:1.2-1} we get $$x_{t}^{1} + \sum_{s\in I(N(v_t))}x_{s}^{1}=x_{t}^{2} + \sum_{s\in I(N(v_t))}x_{s}^{2},$$
hence, $\sum_{s\in I(N(v_t))}\left(x_{s}^{1}-x_{s}^{2}\right )=x_{t}^{2}-x_{t}^{1}$. Substituting it into inequality \eqref{eqn:1.2-5} we get  $\Vert x_{t}^{2}-x_{t}^{1}\Vert^{2} \leq 0$; that is, $x_{t}^{2}=x_{t}^{1}$. To prove part \ref{Part 5}, first note that 
\begin{equation}\label{eqn:1.2-49}
\sum_{s\in I(N(v_p))}x_{s}^{1}=D_{x_{p}}b(x_{1}^{1},\ldots, x_{m}^{1})=Du_{p}(x_{p}^{1})=Du_{p}(x_{p}^{2})=D_{x_{p}}b(x_{1}^{2},\ldots, x_{m}^{2})=\sum_{s\in I(N(v_p))}x_{s}^{2}.
\end{equation}
Hence, 
for any $t\in \{2,\ldots, m\}\setminus \{p\}$ satisfying \eqref{eqn:1.2-6} we obtain
\begin{flalign}
\sum_{s\in I\left( \overline{N}(v_t)\right)}x_{s}^{1}& =\sum_{s\in I\left( \overline{N}(v_p)\right)}x_{s}^{1} \nonumber\\
    &= \sum_{s\in I\left(\overline{N}(v_p)\right)}x_{s}^{2}\qquad\qquad \text{by \eqref{eqn:1.2-49} and the equality $x_{p}^{1}=x_{p}^{2}$.}\nonumber\\
    &= \sum_{s\in I\left( \overline{N}(v_t)\right)}x_{s}^{2}\nonumber\\
\end{flalign} 
 Then, by part \ref{Part 4} we conclude $x_{t}^{1}=x_{t}^{2}$, completing the proof of part \ref{Part a}. To prove part \ref{Part b} observe that $F_{1}\cap F_{2}=\emptyset$, as  $N(v_s)=F_{2}\cup F_{3}$ and $v_{s}\notin N(v_s)$  for every $s\in I(F_{1})$. Then, from \eqref{eqn:1.2-49} we get
\begin{equation*}
\sum_{{s\in I(F_{1})}}x_{s}^{1} + \sum_{{s\in I(F_{2})}}x_{s}^{1} + \sum_{{s\in I(N(v_p)\setminus F_{1}\cup F_{2})}}x_{s}^{1} =\sum_{{s\in  I(F_{1})}}x_{s}^{2} + \sum_{{s\in  I(F_{2})}}x_{s}^{2} + \sum_{{s\in I( N(v_p) \setminus F_{1}\cup F_{2})}}x_{s}^{2},
\end{equation*} 
 as $F_{1}, F_{2}\subseteq N(v_p)$. Since $x_{s}^{1}=x_{s}^{2}$ for every $s\in I( N(v_p) \setminus F_{1}\cup F_{2})$, the above equality reduces to
  \begin{equation}\label{eqn:1.2-22}
\sum_{{s\in I(F_{1})}}x_{s}^{1} + \sum_{{s\in I(F_{2})}}x_{s}^{1} =\sum_{{s\in  I(F_{1})}}x_{s}^{2} + \sum_{{s\in  I(F_{2})}}x_{s}^{2},
\end{equation} 
and applying part \ref{Part 2} we get 
\begin{equation*}
\left(x_{t}^{2}-x_{t}^{1}\right)\cdot\sum_{s\in I(F_{2}\cup F_{3})}\left(x_{s}^{1}-x_{s}^{2}\right )\leq 0,
\end{equation*}
for any $t\in I(F_{1})$. Summing over $t\in I(F_{1})$ and using the equalities $x_{s}^{1}=x_{s}^{2}$ on $I(F_{3})$, we obtain
$$\sum_{t\in I(F_{1})}\left(x_{t}^{2}-x_{t}^{1}\right )\cdot\sum_{s\in I(F_{2})}\left(x_{s}^{1}-x_{s}^{2}\right )\leq 0.$$
Furthermore, by \eqref{eqn:1.2-22} we get $\sum_{t\in I(F_{1})}\left(x_{t}^{2}-x_{t}^{1}\right )=\sum_{s\in I(F_{2})}\left(x_{s}^{1}-x_{s}^{2}\right )$. Substituting it into the above inequality we get
 $$\Vert\sum_{s\in I(F_{2})}\left(x_{s}^{1}-x_{s}^{2}\right )\Vert^{2}\leq 0;  \quad \text{that is,}\quad$$
\begin{equation}\label{eqn:1.2-33}
\sum_{{s\in I(F_{2})}}x_{s}^{1}=\sum_{{s\in  I(F_{2})}}x_{s}^{2}.
\end{equation}
Now, fix $t\in I(F_{1})$ and set  $V_{1}=F_{1}\setminus \{v_{t}\}$, $V_{2}=F_{2}\cup F_{3}$ and $y=(y_{1}, y_{2}, \ldots, y_{m})$ such that
\[y_{s}= \begin{cases} 
      x_{s}^{1} & \text{if}\quad s\in \{1, \ldots, m\}\setminus  I(V_{1}) \\
      x_{s}^{2} & \text{if}\quad  s\in I(V_{1}).\\
    
   \end{cases}
\]

Since  $x_{s}^{1}=x_{s}^{2}$ on $I(F_{3})$, \eqref{eqn:1.2-33} can be written as $\sum_{{s\in I(F_{2}\cup F_{3})}}x_{s}^{1}=\sum_{{s\in  I(F_{2}\cup F_{3})}}x_{s}^{2}$. Therefore, by part \ref{Part 1} we get $y\in W$, as $N(v_s)=V_{2}$ for every $s\in I(V_{1})$. Hence, 
$$\sum_{s\in I(N(v_p))}y_{s}=Du_{p}(y_{p})=Du_{p}(x_{p}^{1})=Du_{p}(x_{p}^{2})=\sum_{s\in I(N(v_p))}x_{s}^{2},$$
or equivalently,
\begin{equation*}
y_{t} + \sum_{{s\in I(F_{1})\setminus \{t\}}}y_{s} + \sum_{{s\in I(F_{2})}}y_{s}+ \sum_{{s\in I( N(v_p) \setminus F_{1}\cup F_{2})}}y_{s} = x_{t}^{2} + \sum_{{s\in  I(F_{1})\setminus \{t\}}}x_{s}^{2} + \sum_{{s\in  I(F_{2})}}x_{s}^{2} + \sum_{{s\in I( N(v_p) \setminus F_{1}\cup F_{2})}}x_{s}^{2}.
\end{equation*} 
From \eqref{eqn:1.2-33}, construction of $y$ and the equalities $x_{s}^{1}=x_{s}^{2}$ on $I( N(v_p) \setminus F_{1}\cup F_{2})$, we get $x_{t}^{1}=x_{t}^{2}$, completing the proof of part \ref{Part b}. 
\end{proof}
\section{Monge solutions under extraction of graphs}\label{Section 3}

The main theorem of this section establishs that, roughly speaking, the extraction from $C_{m}$ of a subgraph with an inner hub provides a unique Monge solution, possibly under an additional regularity condition on one of the marginals. 

We will present several examples of graphs obtained in this way later on, but for now we mention that the graph in Figure \ref{cycle cost}-a is obtained by extracting the edges $\{x_{1}, x_{3}\}$ and $\{x_{2}, x_{4}\}$ from $C_{4}$, which can be interpreted as maximal cliques of the graph with edges $\{x_{1}, x_{3}\}$ and $\{x_{2}, x_{4}\}$, and inner hub $A=\emptyset$. 
\subsection{Monge solutions}
We now state and prove our first main result.
\begin{theorem}\label{Theorem 2}
Let $\{ S_{j}\}_{j=1}^{l}$ be the collection of maximal cliques of a given subgraph $S$ of $C_m$ with inner hub $A$, for some $m\in \mathbb{N}$. Let $G:=C_{m}\setminus S$ be connected, $b$ the surplus function associated to $G$ and $\mu_{i}$ be probability measures over $X_{i}$, $i=1,\ldots, m$, with $\mu_{1}$ absolutely continuous with respect to  $\mathcal{L}^{n}$. Assume that one of the following conditions is met:
\begin{enumerate}[label=(\roman*)]
\item $v_{1}\in V(G)\setminus V(S)$,
\item  There exists $p\in I(N_{G}(v_1))$ such that $ A\subseteq N_{G}(v_p)$, with $\mu_{p}$ is absolutely continuous with respect to  $\mathcal{L}^{n}$, and, if $S$ is not complete, $v_{1}\notin A$.
\end{enumerate}
Then every solution to the Kantorovich problem (KP) is induced by a map. 
\end{theorem}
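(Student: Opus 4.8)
The plan is to combine the duality theorem of Section~2 with the structural description of $G=C_{m}\setminus S$ and the algebraic identities of Lemma~\ref{Lemma:1}. First I would fix, via that duality theorem, a Kantorovich solution $\mu$ and a $b$-conjugate dual solution $(u_{1},\ldots,u_{m})$, so that $\mathrm{spt}(\mu)\subseteq W$ with $W$ the equality set appearing in Lemma~\ref{Lemma:1}. Since $b$ is locally Lipschitz and semi-convex, so is each $u_{i}$, and hence each $u_{i}$ is differentiable $\mathcal{L}^{n}$-a.e.; as $\mu_{1}$ (and, under hypothesis~(ii), $\mu_{p}$) is absolutely continuous, $u_{1}$ is differentiable at $x_{1}$ for $\mu$-a.e.\ $(x_{1},\ldots,x_{m})$ (resp.\ $u_{p}$ at $x_{p}$ for $\mu$-a.e.\ $(x_{1},\ldots,x_{m})$). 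By the usual disintegration argument, ``$\mu$ is induced by a map'' follows once we show: whenever $(x_{1}^{0},x_{2}^{1},\ldots,x_{m}^{1}),(x_{1}^{0},x_{2}^{2},\ldots,x_{m}^{2})\in W$ with $x_{1}^{1}=x_{1}^{2}=x_{1}^{0}$ a differentiability point of $u_{1}$ (and, in case~(ii), $x_{p}^{1},x_{p}^{2}$ differentiability points of $u_{p}$), then $x_{i}^{1}=x_{i}^{2}$ for all $i$. We adopt the notation of Lemma~\ref{Lemma:1} and abbreviate $e_{s}:=x_{s}^{1}-x_{s}^{2}$.

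The next step is to read off the structure of $G$ dictated by the inner hub. Put $B:=V(G)\setminus V(S)$, let $A$ be the inner hub, and $D_{j}:=V(S_{j})\setminus A$. Directly from Definition~\ref{Def 1} one obtains $N_{G}(v)=V(G)\setminus\{v\}$ for $v\in B$ (so $B$ is a clique joined to all of $G$), $N_{G}(v)=B$ for every $v\in A$, $N_{G}(v)=V(G)\setminus V(S_{j})$ for every $v\in D_{j}$, and $A$ and each $D_{j}$ are independent sets. Connectedness of $G$ forces $B\neq\emptyset$ whenever $A\neq\emptyset$, and, since $A\subseteq V(S_{j})$ for every $j$, forces the vertex $v_{p}$ of hypothesis~(ii) to lie in $B$ whenever $A\neq\emptyset$ (so that then $A\subseteq N_{G}(v_{p})$ automatically). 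The point of this structure is that the vertices of $A$ have the common neighbourhood $B$ and those of each $D_{j}$ the common neighbourhood $V(G)\setminus V(S_{j})$, which is exactly the configuration required by part~\ref{Part 1} of Lemma~\ref{Lemma:1}. Throughout I shall use the following \emph{first-order mechanism}: if $z\in W$ and $u_{q}$ is differentiable at $z_{q}$, then $\sum_{s\in I(N(v_{q}))}z_{s}=Du_{q}(z_{q})$ (this is \eqref{eqn:1.2-49}, isolated from the proof of part~\ref{Part 5}); hence, if part~\ref{Part 1} produces $y\in W$ agreeing with $(x_{1}^{1},\ldots,x_{m}^{1})$ off a set $I(V_{1})$ with $v_{q}\notin V_{1}\subseteq N_{G}(v_{q})$ and $y_{q}$ a differentiability point of $u_{q}$, then comparing this identity at $v_{q}$ for $y$ and for $(x_{1}^{1},\ldots,x_{m}^{1})$ gives $\sum_{s\in I(V_{1})}e_{s}=0$.

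\textbf{Case (i): $v_{1}\in B$.} Here $N_{G}(v_{1})=V(G)\setminus\{v_{1}\}$, so the first-order mechanism at $v_{1}$ together with $x_{1}^{1}=x_{1}^{2}$ gives $\sum_{s}e_{s}=0$; as $\overline{N}(v_{t})=V(G)$ for every $v_{t}\in B$, part~\ref{Part 4} yields $x_{t}^{1}=x_{t}^{2}$ for all $v_{t}\in B$. Applying part~\ref{Part 1} with $V_{1}=A$, $V_{2}=B$ (whose hypothesis \eqref{eqn:1.2-21} now holds) and then the first-order mechanism at $v_{1}$ gives $\sum_{s\in I(A)}e_{s}=0$. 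Next, summing the inequality of part~\ref{Part 2} over the vertices of a fixed $D_{j}$ and inserting these two sum-identities collapses everything to $\Vert\sum_{s\in I(D_{j})}e_{s}\Vert^{2}\le 0$, so $\sum_{s\in I(D_{j})}e_{s}=0$ for every $j$. Finally, re-running the part~\ref{Part 1}/first-order mechanism with $V_{1}=D_{j}\setminus\{v_{t}\}$ (for each $v_{t}\in D_{j}$) and with $V_{1}=A\setminus\{v_{t}\}$ (for each $v_{t}\in A$) --- the hypothesis \eqref{eqn:1.2-21} being supplied each time by the block-sum identities already proved --- peels the block sums down to single coordinates, giving $x_{t}^{1}=x_{t}^{2}$ for all $t$. (The last peeling step may equally be carried out via part~\ref{Part 5}.)

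\textbf{Case (ii), and the main obstacle.} When $v_{1}\notin B$ the first-order identity at $v_{1}$ controls only the partial sum $\sum_{s\in I(N(v_{1}))}e_{s}=0$, and the crux of the argument is to \emph{transfer} the differentiability to $v_{p}$, which (when $A\neq\emptyset$) lies in $B$, so $N_{G}(v_{p})=V(G)\setminus\{v_{p}\}\supseteq A$. Say $v_{1}\in D_{j}$. Then I would: (a) rewrite $\sum_{s\in I(N(v_{1}))}e_{s}=0$ as $\sum_{s\in I(B)}e_{s}+\sum_{j'\ne j}\sum_{s\in I(D_{j'})}e_{s}=0$; (b) observe that this is precisely hypothesis \eqref{eqn:1.2-21} for $V_{1}=D_{j}$, $V_{2}=V(G)\setminus V(S_{j})$, build via part~\ref{Part 1} the maximiser $y\in W$ that swaps the $D_{j}$-block, and apply the first-order mechanism at $v_{p}$ to $y$ (legitimate since $v_{p}\in B\setminus V(S_{j})$, so $y_{p}=x_{p}^{1}$) to conclude $\sum_{s\in I(D_{j})}e_{s}=0$; (c) feed this into part~\ref{Part 2} summed over each $D_{j'}$ with $j'\ne j$ to get $\sum_{s\in I(D_{j'})}e_{s}=0$ for all $j'$, hence $\sum_{s\in I(B)}e_{s}=0$ by~(a), and then part~\ref{Part 2} summed over $B$ forces $x_{t}^{1}=x_{t}^{2}$ for all $v_{t}\in B$, in particular $x_{p}^{1}=x_{p}^{2}$. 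From here $v_{p}$ plays exactly the role $v_{1}$ played in case~(i), and re-running that argument with $v_{p}$ in place of $v_{1}$ --- the hypothesis $A\subseteq N_{G}(v_{p})$ entering precisely at the step that produces $\sum_{s\in I(A)}e_{s}=0$ --- finishes the proof. The remaining configurations need only minor variants: when $v_{1}\in A$ (allowed only when $S$ is complete) the first-order identity at $v_{1}$ already reads $\sum_{s\in I(B)}e_{s}=0$, shortcutting (a)--(c); when $A=\emptyset$ and $v_{p}$ happens to lie in some $D_{j'}$, one appeals to $v_{1}$'s first-order condition rather than $v_{p}$'s at those points where altering the $D_{j'}$-block leaves $N_{G}(v_{p})$ untouched. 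I expect steps (b)--(c) to be the genuine difficulty: one must order the auxiliary maximisers so that the hypothesis \eqref{eqn:1.2-21} is available at each stage, and verify that the ensuing cascade of block-sum identities really does force all the $D_{j'}$-sums, and then the $B$-sum, to vanish; the sign bookkeeping in the $D_{j'}$-summations of part~\ref{Part 2} is delicate, and the argument is visibly sensitive to the location of $v_{1}$ (in $B$, in $A$, or in a clique remainder $D_{j}$) and to the condition $A\subseteq N_{G}(v_{p})$. Once every solution of \eqref{KP} is known to be of Monge type, uniqueness follows from the separate, standard argument of Section~5.
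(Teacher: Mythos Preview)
Your proposal is correct and follows essentially the same route as the paper: reduce via duality to injectivity of $(x_2,\ldots,x_m)\mapsto D_{x_1}b$ on the equality set, record the neighbourhood structure $N_G(v)=V(G)\setminus\{v\}$ on $B$, $N_G(v)=B$ on $A$, $N_G(v)=V(G)\setminus V(S_j)$ on $D_j$, and then iterate Lemma~\ref{Lemma:1} together with the first-order identities at $v_1$ (and $v_p$ in case~(ii)) to force $e_s=0$ block by block. The only real difference is tactical: where the paper packages the ``build a maximiser via part~\ref{Part 1}, compare first-order conditions, then use part~\ref{Part 2}'' routine into a single invocation of part~\ref{Part b} (with carefully chosen $F_1,F_2,F_3$), you unpack that mechanism explicitly; and in case~(ii) your order of attack (first the $D_j$-sum via part~\ref{Part 1} and the first-order identity at $v_p$, then the remaining $D_{j'}$-sums via part~\ref{Part 2}, then $B$ pointwise, then transfer to $v_p$) differs from the paper's (first the $D_{j'}$'s for $j'\neq k$ pointwise via part~\ref{Part b}, then $B$, then $A$ and $D_k$). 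Both orderings work, and your sketch of the ``delicate'' sign bookkeeping in step~(c) checks out --- in particular, summing part~\ref{Part 2} over $B$ does kill $e_t$ on $B$ even before $\sum_{A}e_s$ is controlled, because the cross term vanishes by $\sum_{B}e_t=0$.
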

\begin{proof}
Let $\gamma$ be a solution to the Kantorovich problem with surplus $b$ and $(u_{1}, \ldots, u_{m})$ a $b$-conjugate solution to its dual. Consider:
\begin{align*}
\widetilde{W}&=\Big\{ (x_{1},\ldots, x_{m}): Du_{1}(x_{1})\quad\text{exists,}\quad \text{and} \quad\sum_{i=1}^{m}u_{i}(x_{i})=b(x_{1},\ldots, x_{m})\Big\}.
\end{align*}
The function $u_{1}$ is differentiable $\mathcal{L}^{n}$-a.e, as it is Lipschitz continuous. Hence, it is differentiable $\mu_{1}$ a.e,  as $\mu_{1}$ is absolutely continuous. It follows that $\gamma (\widetilde{W})=1$. Similarly, if in addition there exists $p\in \{2, \ldots, m\}$ such that $\mu_{p}$ is absolutely continuous with respect to  $\mathcal{L}^{n}$, we get $u_{p}$ is differentiable $\mu_{p}$ a.e and $\gamma (\widetilde{W_{p}})=1$, where 
\begin{align*}
\widetilde{W_{p}}&=\Big\{ (x_{1},\ldots, x_{m}): Du_{1}(x_{1})\quad \text{and}\quad Du_{p}(x_{p})\quad \text{exist,}\quad \text{and} \quad\sum_{i=1}^{m}u_{i}(x_{i})=b(x_{1},\ldots, x_{m})\Big\}.
\end{align*}
Fix $x_{1}^{0}\in spt(\mu_{1})$, where $u_{1}(x_{1})$ is differentiable, and $(x_{2}^{0},\ldots, x_{m}^{0})$ such that $(x_{1}^{0},\ldots, x_{m}^{0})\in \widetilde{W}$.
 Note that  
$b$ is differentiable with respect to $x_{1}$ at $(x_{1}^{0},\ldots, x_{m}^{0})$ and it satisfies 
\begin{equation}\label{Theorem 1.2-1}
Du_{1}(x_{1}^{0})= D_{x_{1}}b(x_{1}^{0},\ldots, x_{m}^{0}).
\end{equation}
\par
We will show that the map $$(x_{2},\ldots, x_{m})\mapsto D_{x_{1}}b(x_{1}^{0},x_{2}\ldots, x_{m})$$ is injective on $\widetilde{W}_{x_{1}^{0}}:=\left\lbrace( x_{2}, \ldots, x_{m}): (x_{1}^{0}, x_{2}, \ldots, x_{m})\in \widetilde{W}\right\rbrace$, if $v_{1}\in V(G)\setminus V(S)$, or on  $\widetilde{W}_{x_{1p}^{0}}:=\left\lbrace( x_{2}, \ldots, x_{m}): (x_{1}^{0}, x_{2}, \ldots, x_{m})\in \widetilde{W_{p}}\right\rbrace$, if there exists $p \in I \left( N(v_1)\right)$ such that $\mu_{p}$ is absolutely continuous with respect to  $\mathcal{L}^{n}$ and $A\subseteq N(v_p)$; this will imply that the equation \eqref{Theorem 1.2-1} defines $(x_{2}^{0},\ldots, x_{m}^{0})$ uniquely from $x_{1}^{0}$, which will complete the proof. Let $(x_{1}^{0},x_{2}^{1},\ldots, x_{m}^{1}),(x_{1}^{0},x_{2}^{2},\ldots, x_{m}^{2})\in \widetilde{W}$ and assume 
\begin{equation}\label{eqn:1.2-23}
D_{x_{1}}b(x_{1}^{0},x_{2}^{1},\ldots, x_{m}^{1})=\sum_{s\in I(N(v_1))}x_{s}^{1}=\sum_{s\in I(N(v_1))}x_{s}^{2}=D_{x_{1}}b(x_{1}^{0},x_{2}^{2},\ldots, x_{m}^{2}).
\end{equation}
We want to prove $x_{s}^{1}=x_{s}^{2}$ for every $s\in \{2, \ldots, m\}$.\\
  Set $x_{1}^{0}:=x_{1}^{1}:=x_{1}^{2}$ and $B_{j}=V(S_{j})\setminus A$, with $j\in \{1, \ldots, l\}$. First, note that $S=\bigcup_{j=1}^{l}S_j $ and 
\begin{equation}\label{eqn:1.2-41}
N(v_s)=V(G)\setminus \{v_{s}\}\quad \text{for any $s\in I(V(G)\setminus V(S))$, }\quad \quad\qquad
\end{equation}
\begin{equation}\label{eqn:1.2-42}
N(v_s)=V(G)\setminus V(S_{j})\quad\text{for any $s\in I(B_{j}),\quad  j\in\{1,\ldots,l\}$},\quad 
\end{equation}
\begin{equation}\label{eqn:1.2-26}
 N(v_s)=V(G)\setminus V(S), \quad\text{for any $s\in I(A)$}.\qquad\qquad\qquad\quad\quad\enspace
\end{equation}
Let us consider two cases:
\begin{enumerate}[label=\textbf{Case \arabic*.}]

\item Assume $v_{1}\in V(G)\setminus V(S)=\{v_{1}, \ldots,v_{m}\}\setminus V(S)$, then by \eqref{eqn:1.2-41} we get 
$\overline{N}(v_1)=V(G)=\overline{N}(v_s)$  for any $s\in I(V(G)\setminus V(S))\setminus \{1\}$. It follows from part \ref{Part a} of Lemma \ref{Lemma:1} that
\begin{equation}\label{eqn:1.2-24}
x_{s}^{1}=x_{s}^{2}\quad \text{for all}\quad s\in I(V(G)\setminus V(S))\setminus \{1\}.
\end{equation}
Fix $j\in \{1, \ldots, l\}$, and let us consider two sub-cases:
\begin{enumerate}
\item \label{Sub-case a} If $A=\emptyset$, then $V(S_{j})=B_{j}$. By defining $F_{1}= B_{j}$ and $F_{2}= \left( V(G)\setminus B_{j}\right) \setminus \{v_{1}\}$, we get
$ F_{1}\cup F_{2}=V(G)\setminus \{v_{1}\}=N(v_1)$. Also, from \eqref{eqn:1.2-42} we have  $N(v_s)= F_{2}\cup F_{3} $ for all $s\in I(F_{1})$, where $F_{3}=\{v_{1}\}$. Then, we can apply part \ref{Part b} of Lemma \ref{Lemma:1} to get $x_{s}^{1}=x_{s}^{2}$ for every $s\in I(B_{j})$; that is, $x_{s}^{1}=x_{s}^{2}$ on $\bigcup_{j=1}^{l}I(B_{j})=\bigcup_{j=1}^{l}I(V(S_{j}))=I(V(S))$. Combining this result with \eqref{eqn:1.2-24} we get $x_{s}^{1}=x_{s}^{2}$ on $I(V(G))\setminus \{1\}=\{2, \ldots, m\}$. This completes the proof of sub-case (a).
\item Assume $A\neq\emptyset$. By setting $V_{2}=V(G)\setminus V(S)$  and $V_{1}=A$ we can use \eqref{eqn:1.2-24} to get equality \eqref{eqn:1.2-21}, and then, by \eqref{eqn:1.2-26} and part \ref{Part 1} of Lemma \ref{Lemma:1} we get $y:=(y_{1}, y_{2}, \ldots, y_{m})\in \widetilde{W}$, where

\[y_{s}= \begin{cases} 
      x_{s}^{1} & \text{if}\quad s\in \{1, \ldots, m\}\setminus I(A) \\
      x_{s}^{2} & \text{if}\quad  s\in I(A). \\

   \end{cases}
\]
Now, set $F_{1}= B_{j}$, $F_{2}= \left( V(G)\setminus V(S_{j})\right) \setminus \{v_{1}\}$ and $F_{3}=\{v_{1}\}$. Clearly, $F_{1}, F_{2}\subseteq N(v_1) =V(G)\setminus \{v_{1}\}$ and $F_{1}\cup F_{2}=V(G)\setminus \left( A\cup \{v_{1}\}\right)$, then $N(v_1)\setminus \left( F_{1}\cup F_{2}\right)=A.$
Furthermore, $y$ and $(x_{1}^{0},x_{2}^{2},\ldots, x_{m}^{2})$ trivially satisfies $y_{s}=x_{s}^{2}$ on $I(A)$, and  by \eqref{eqn:1.2-42}, $N(v_s)= F_{2}\cup F_{3}$ for every $s\in I(F_{1})$. Hence, by part \ref{Part b} of Lemma \ref{Lemma:1} we get $x_{s}^{1}=y_{s}=x_{s}^{2}$ on $I(B_{j})$, which proves that, using \eqref{eqn:1.2-24} and the equality $\bigcup_{j=1}^{l}B_{j}=V(S)\setminus A$, $x_{s}^{1}=x_{s}^{2}$ on $I(V(G)\setminus A)\setminus \{1\}$. We combine this result with \eqref{eqn:1.2-26} and  part \ref{Part b} of Lemma \ref{Lemma:1} to get $x_{s}^{1}=x_{s}^{2}$ on $I(A)$; all the conditions needed to apply this part of the lemma are trivially satisfied by setting $F_{1}=A$, $F_{2}= \left(V(G)\setminus V(S)\right)\setminus \{v_{1}\}$, $F_{3}=\{v_{1}\}$ and $p=1$. We conclude  that $x_{s}^{1}=x_{s}^{2}$ on $I(V(G)\setminus \{v_{1}\})=\{ 2,\ldots, m\}$, completing the proof of sub-case (b).

\end{enumerate}
This completes the proof of case 1.
\item Assume $v_1\in V(S)$ and let $p\in I(N(v_1))$ be such that $\mu_{p}$ is absolutely continuous with respect to  $\mathcal{L}^{n}$ and $ A\subseteq N(v_p)$. Assume $Du_{p}(x_{p}^{1})$ and $Du_{p}(x_{p}^{2})$ exist. If $S$ is complete, $S_j=S$ for all $j=1,\ldots, l$, and so, $A=V(S)$ . Using \eqref{eqn:1.2-23} and \eqref{eqn:1.2-26} we obtain

\begin{equation}\label{eqn:1.2-70}
\sum_{s\in I\left( V(G)\setminus A\right)}x_{s}^{1}=\sum_{s\in I\left( V(G)\setminus A\right)}x_{s}^{2}. 
\end{equation}
Also, using \eqref{eqn:1.2-26} and part 1 of Lemma 2.1 we get  $z:=(z_{1}, \ldots, z_{m})\in \widetilde{W_{p}}$, where
\begin{equation}\label{eqn:1.2-53}
z_{s}= \begin{cases} 
      x_{s}^{1} & \text{if}\quad s\in \{1, \ldots, m\}\setminus I(A) \\
      x_{s}^{2} & \text{if}\quad  s\in I(A). \\

   \end{cases}
   \end{equation}
   
Fix $t\in I(V(G)\setminus A)$. Then
\begin{flalign*}
\sum_{s\in I\left(\overline{N}(v_t)\right)}z_{s}& = z_{t} + \sum_{s\in I\left(N(v_t)\right)}z_{s}\nonumber\\
&=z_{t} + \sum_{s\in I\left(V(G)\setminus \{v_{t}\}\right)}z_{s}\qquad\qquad\qquad\qquad \text{by \eqref{eqn:1.2-41}}  \nonumber\\
&=\sum_{s\in I\left(V(G)\right)}z_{s}\nonumber\\
  &=\sum_{s\in I\left(A\right)}z_{s}+\sum_{s\in I\left(V(G)\setminus A\right)}z_{s}\nonumber\\
   &=\sum_{s\in I\left(A\right)}x_{s}^{2}+\sum_{s\in I\left(V(G)\setminus A\right)}x_{s}^{1}\qquad\qquad\qquad \text{by construction of $z$}\nonumber\\
    &= \sum_{s\in I\left(A\right)}x_{s}^{2}+\sum_{s\in I\left(V(G)\setminus A\right)}x_{s}^{2}\qquad\qquad\qquad \text{by \eqref{eqn:1.2-70}} \nonumber\\
    &=\sum_{s\in I\left(V(G)\right)}x_{s}^{2}\nonumber\\
    &= x_{t}^{2} + \sum_{s\in I\left(V(G)\setminus \{v_{t}\}\right)}x_{s}^{2}\nonumber\\
    &=x_{t}^{2} + \sum_{s\in I\left(N(v_t)\right)}x_{s}^{2}\\
    &= \sum_{s\in I\left(\overline{N}(v_t)\right)}x_{s}^{2}.
\end{flalign*}
It follows that $z_{s}=x_{s}^{2}$ on $I(V(G)\setminus A)$,  by part \ref{Part 4} of Lemma \ref{Lemma:1}; that is,
\begin{equation}\label{eqn:1.2-71}
x_{s}^{1}=x_{s}^{2}\;\;\text{on}\;\; I(V(G)\setminus A),
\end{equation}

  by construction of $z$. Now, to prove  that $x_{s}^{1}=x_{s}^{2}$ on $I(A)$ we use part \ref{Part b} of Lemma \ref{Lemma:1}. For this, set $F_1=A$, $F_2=V(G)\setminus (A\cup \{v_p\})$ and $F_3=\{v_p\}$. Since $v_p\in N(v_1)=V(G)\setminus V(S)$, hence $N(v_p)=V(G)\setminus \{v_p\}$ by \eqref{eqn:1.2-41}, and so $F_1, F_2\subseteq N(v_p)$. Note that $F_1\cup F_2=N(v_p)$ and by \eqref{eqn:1.2-26}, $F_2\cup F_3=V(G)\setminus A=N(v_s)$ for every $s\in I(F_1)$. Also, from \eqref{eqn:1.2-71}, $x_{p}^{1}=x_{p}^{2}$. This allow us to apply part \ref{Part b} of Lemma \ref{Lemma:1} to get $x_{s}^{1}=x_{s}^{2}$ on $I(A)$. Hence,  $x_{s}^{1}=x_{s}^{2}$ on $I(V(G))=\{1, \ldots,m\}$.\\
Let us know assume that $S$ is not complete, then $v_{1}\notin A$ by assumption, which implies that $v_{1}\in B_{k}$ for some $k\in \{ 1, \ldots, l\}$. We first claim that   $x_{s}^{1}=x_{s}^{2}$ on $\bigcup_{\underset{j\neq k}{j=1}}^{l}I(B_{j})= I(V(S)\setminus V(S_{k}))$. Indeed, from  \eqref{eqn:1.2-42} and \eqref{eqn:1.2-23} we get
\begin{equation}\label{eqn:1.2-27}
\sum_{s\in I\left( V(G)\setminus V(S_{k})\right)}x_{s}^{1}=\sum_{s\in I\left( V(G)\setminus V(S_{k})\right)}x_{s}^{2}.
\end{equation}
It follows that, by setting $V_{1}= B_{k}$ and $V_{2}=  V(G)\setminus V(S_{k})$, we can use \eqref{eqn:1.2-42} and part \ref{Part 1} of  Lemma \ref{Lemma:1} to get $y:=(y_{1}, \ldots, y_{m})\in \widetilde{W_{p}}$, where
\begin{equation}\label{eqn:1.2-53}
y_{s}= \begin{cases} 
      x_{s}^{1} & \text{if}\quad s\in \{1, \ldots, m\}\setminus I(B_{k}) \\
      x_{s}^{2} & \text{if}\quad  s\in I(B_{k}). \\

   \end{cases}
   \end{equation}
Fix $j\in \{1, \ldots, l\}$, with $j\neq k$. Set $F_{1}=B_{j}$, $F_{2}= V(G)\setminus \left(V(S_{k})\cup B_{j}\right)$ and $F_{3}= B_{k}$. Note that from  \eqref{eqn:1.2-42} we get  $F_{1}\cup F_{2}= V(G)\setminus V(S_{k})=N(v_1)$ and $F_{2}\cup F_{3}= V(G)\setminus V(S_{j})=N(v_s)$, for any $s\in I(F_{1})$. Since $y$ and $(x_{1}^{0},x_{2}^{2},\ldots, x_{m}^{2})$ satisfies $y_{s}=x_{s}^{2}$ on $I(F_{3})$, we can apply part \ref{Part b} of Lemma \ref{Lemma:1} to get $x_{s}^{1}=y_{s}=x_{s}^{2}$ on $I(B_{j})$; that is, 
\begin{equation}\label{eqn:1.2-43}
 x_{s}^{1}=x_{s}^{2}\quad\text{on}\quad \bigcup_{\underset{j\neq k}{j=1}}^{l}I(B_{j})= I(V(S)\setminus V(S_{k})).
\end{equation}
Next, as in Case 1, let us consider the following sub-cases:
\begin{enumerate}
\item If $A= \emptyset$, then $V(S_{k})=B_{k}$. Also, for any $t\in I(V(G)\setminus V(S))$ we get
\begin{flalign*}
\sum_{s\in I\left(\overline{N}(v_t)\right)}y_{s}& =y_{t} + \sum_{s\in I\left(N(v_t)\right)}y_{s}\nonumber\\
&=y_{t} + \sum_{s\in I\left(V(G)\setminus \{v_{t}\}\right)}y_{s}\qquad\qquad\qquad\qquad \text{by \eqref{eqn:1.2-41}}  \nonumber\\
&=\sum_{s\in I\left(V(G)\right)}y_{s}\nonumber\\
  &=\sum_{s\in I\left(B_{k}\right)}y_{s}+\sum_{s\in I\left(V(G)\setminus B_{k}\right)}y_{s}\nonumber\\
   &=\sum_{s\in I\left(B_{k}\right)}x_{s}^{2}+\sum_{s\in I\left(V(G)\setminus B_{k}\right)}x_{s}^{1}\qquad\qquad\qquad \text{by construction of $y$}\nonumber\\
    &= \sum_{s\in I\left(B_{k}\right)}x_{s}^{2}+\sum_{s\in I\left(V(G)\setminus B_{k}\right)}x_{s}^{2}\qquad\qquad\qquad \text{by \eqref{eqn:1.2-27}} \nonumber\\
    &=\sum_{s\in I\left(V(G)\right)}x_{s}^{2}\nonumber\\
    &= x_{t}^{2} + \sum_{s\in I\left(V(G)\setminus \{v_{t}\}\right)}x_{s}^{2}\nonumber\\
    &=x_{t}^{2} + \sum_{s\in I\left(N(v_t)\right)}x_{s}^{2}\\
    &= \sum_{s\in I\left(\overline{N}(v_t)\right)}x_{s}^{2}
\end{flalign*}

Thus, by part \ref{Part 4} of Lemma \ref{Lemma:1} we obtain $x_{s}^{1}=x_{s}^{2}$ on $I(V(G)\setminus V(S))$, as $y_{s}=x_{s}^{1}$ on $I(V(G)\setminus V(S))$. Combining this with \eqref{eqn:1.2-43} we deduce
\begin{equation}\label{eqn:1.2-44}
 x_{s}^{1}=x_{s}^{2}\quad\text{on}\quad I(V(G)\setminus V(S_{k}))=I(V(G)\setminus B_{k})=I(N(v_1)).
\end{equation}
To prove that $x_{s}^{1}=x_{s}^{2}$ on $I(B_{k})$ we use part \ref{Part b} of Lemma \ref{Lemma:1}. Let us first recall that $p\in I(N(v_1))$, and then, the above result tell us that $x_{p}^{1}=x_{p}^{2}$. 
Furthermore, $p\in I(V(G)\setminus V(S))$ or  $p\in I(B_{j})$ for some $j\in \{1, \ldots, l\}$, $k\neq j$. Thus, from \eqref{eqn:1.2-41}, \eqref{eqn:1.2-42} and the disjointness  of $B_{k}$ and $B_{j}$, we deduce $B_{k}\subseteq N(v_p)$. 
Now, set $F_{1}=B_{k}$, $F_{2}=N(v_p)\setminus B_{k}$ and $F_{3}=V(G)\setminus N(v_p)$. Then, $F_{1}, F_{2}\subseteq N(v_p)$, $F_{1}\cup F_{2}=N(v_p)$ and $F_{2}\cup F_{3}= V(G)\setminus B_{k}= N(v_s)$, for every $s\in I(F_{1})$. Also, from \eqref{eqn:1.2-44} we get $x_{s}^{1}=x_{s}^{2}$ on $I(F_{3})$, as it is evident that $B_{k}\cap F_{3}=\emptyset$. We can then apply part \ref{Part b} of Lemma \ref{Lemma:1} to obtain $x_{s}^{1}=x_{s}^{2}$ on $I\left( B_{k}\right)$, which combined with \eqref{eqn:1.2-44} allow us to have  $x_{s}^{1}=x_{s}^{2}$ on $\{2, \ldots, m\}$. This completes the proof of sub-case (a).
\item Assume  $A\neq \emptyset$. Let us first prove that $x_{s}^{1}=x_{s}^{2}$ on $I(V(G)\setminus V(S))$; this will be achieved via part \ref{Part 4} of Lemma \ref{Lemma:1}.
\par 
Using  \eqref{eqn:1.2-27} and \eqref{eqn:1.2-43}, we can write 
\begin{equation*}
\sum_{s\in I\left( V(G)\setminus V(S)\right)}x_{s}^{1}=\sum_{s\in I\left( V(G)\setminus V(S)\right)}x_{s}^{2},
\end{equation*}
and defining $y$ as in \eqref{eqn:1.2-53} we can equivalently  write 
\begin{equation*}
\sum_{s\in I\left( V(G)\setminus V(S)\right)}y_{s}=\sum_{s\in I\left( V(G)\setminus V(S)\right)}x_{s}^{2}.
\end{equation*}
Hence, from \eqref{eqn:1.2-26} and part \ref{Part 1} of Lemma \ref{Lemma:1} we get $y^{\prime}=(y^{\prime}_{1},\ldots, y^{\prime}_{m})\in \widetilde{W_{p}}$, where 
\[y_{s}^{\prime}=\begin{cases} 
      y_{s} & \text{if}\quad s\in I(V(G) \setminus A) \\
      x_{s}^{2} & \text{if}\quad  s\in I(A) \\
   \end{cases}
\]
 \[\qquad \quad=\begin{cases} 
      x_{s}^{1} & \text{if}\quad s\in I(V(G) \setminus V(S_{k})) \\
      x_{s}^{2} & \text{if}\quad  s\in I(V(S_{k})). \\
   \end{cases}
\]
Then, 
$$\sum_{s\in I(V(G)\setminus V(S_{k}))}y_{s}^{\prime}=\sum_{s\in I(N(v_1))}y_{s}^{\prime}=Du_{1}(x_{1}^{0})=\sum_{s\in I(N(v_1))}x_{s}^{2}=\sum_{s\in I(V(G)\setminus V(S_{k}))}x_{s}^{2}.$$
By construction of $y^{\prime}$, one has,
\begin{equation*}
\sum_{s\in I\left( V(G)\right)}y_{s}^{\prime}=\sum_{s\in I\left( V(G)\right)}x_{s}^{2}.
\end{equation*}
Hence, using \eqref{eqn:1.2-41} we clearly might  express it as 
\begin{equation*}
\sum_{s\in I\left( \overline{N}(v_t)\right)}y_{s}^{\prime}=\sum_{s\in I\left( \overline{N}(v_t)\right)}x_{s}^{2},
\end{equation*}
for every fixed $t\in I(V(G)\setminus V(S))$. We can now apply  part \ref{Part 4} of Lemma \ref{Lemma:1} and get $y_{t}^{\prime}=x_{t}^{2}$ on $I(V(G)\setminus V(S))$, which implies $x_{t}^{1}=x_{t}^{2}$ on $I(V(G)\setminus V(S))$, since $I(V(G)\setminus V(S))\subseteq I(V(G)\setminus V(S_{k}))$ and $y_{t}^{\prime}=x_{t}^{1}$ on $ I(V(G)\setminus V(S_{k}))$. Thus, from  \eqref{eqn:1.2-43},
\begin{equation}\label{eqn:1.2-52}
 x_{s}^{1}=x_{s}^{2}\quad\text{on}\quad I(V(G)\setminus V(S_{k}))=I(N(v_1)).
\end{equation}
 \par
 It only remains to prove that $x_{s}^{1}=x_{s}^{2}$ on $I(V(S_{k}))$. Since $p\in I(N(v_1))$, from the above equalities  $p\in I\left( V(G)\setminus V(S)\right)$ or $p\in I(B_{j})$ for some $j\neq k$, and  $x_p^{1}=x_p^{2}$. Then $N(v_p)=V(G)\setminus \{v_{p}\}$ or $N(v_p)=V(G)\setminus V(S_{j})$. It follows that $N(v_p)=V(G)\setminus \{v_{p}\}$, as $A\subseteq N(v_p)$ and $A\cap \left( V(G)\setminus V(S_j)\right)=\emptyset$. Now, set $F_{1}=A$, $F_{2}=V(G)\setminus \left( V(S)\cup \{v_{p}\}\right)$ and $F_{3}=\{v_{p}\}$. It is clear that $F_{1},F_{2}\subseteq N(v_p)$ and $F_{2}\cup F_{3}=V(G)\setminus V(S)=N(v_s)$ for every $s\in I(A)$. Furthermore, for $y$ defined as in \eqref{eqn:1.2-53} and $(x_{1}^{0}, x_{2}^{2}, \ldots, x_{m}^{2})$, we have $y_{s}=x_{2}^{2}$ on $I(B_{k})$.  It follows from \eqref{eqn:1.2-52} and construction of $y$ that $y_{s}=x_{2}^{2}$ on $I(V(G)\setminus A)$; in particular, $y_{s}=x_{2}^{2}$ on $I\left(N(v_p)\setminus F_{1}\cup F_{2}\right)\subseteq I(V(G)\setminus A) $. Hence, $x_{s}^{1}=y_{s}=x_{2}^{2}$ on $I(A)$ by part \ref{Part b} of Lemma \ref{Lemma:1}, and then, we can easily obtain $x_{s}^{1}=x_{s}^{2}$ on $I(B_{k})$, by applying again part \ref{Part b} of Lemma \ref{Lemma:1}; this time we set $F_{1}=B_{k}$, $F_{2}=\left( V(G)\setminus V(S_{k})\right)\setminus \{v_{p}\}$ and $F_{3}=\{v_{p}\}$.  This completes the proof of sub-case (b)\par
\end{enumerate}
\end{enumerate}
This completes the proof of the theorem.
\end{proof}

 Before presenting some examples, we note the following consequence of the preceding theorem.  
\begin{corollary}\label{cor: one missing edge}
Let $G$ be a subgraph of $C_{m}$ with $|G|=m$, and  satisfying $|N(v)|\in \{m-1, m-2\}$ for every $v\in V(G)$.  Assume that $\mu_{1}$ is absolutely continuous with respect to Lebesgue measure, and that \emph{either} $|N(v_1)| =m-1$ \emph{or} that $\mu_{i}$ is absolutely continuous with respect to Lesbesgue measure for some $i \in I(N(v_1))$.  Then every solution to the Kantorovich problem \eqref{KP} is induced by a map.
\end{corollary}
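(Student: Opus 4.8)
The plan is to derive the corollary directly from Theorem~\ref{Theorem 2} by exhibiting a subgraph $S$ of $C_m$ with $C_m\setminus S=G$ and a well-defined inner hub. The cases $m\le 2$ are covered by classical optimal transport theory \cite{Brenier}, and if $G=C_m$ is complete the statement is the Gangbo and \'{S}wi\c{e}ch theorem \cite{Gangbo} (here $|N(v_1)|=m-1$, so the first alternative is in force); so I would assume $m\ge 3$ and that $G$ has at least one missing edge. Let $M:=E(C_m)\setminus E(G)$ be the set of missing edges. The hypothesis $|N(v)|\ge m-2$ for every vertex $v$ says exactly that each vertex is incident to at most one edge of $M$, i.e. $M$ is a matching. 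I would then let $S$ be the subgraph of $C_m$ whose vertex set $V(S)$ consists of the endpoints of the edges of $M$ and whose edge set is $M$; thus $S$ is a disjoint union of copies of $K_2$ and $C_m\setminus S=G$.

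Next I would compute the inner hub $A$ of $S$. If $|M|=1$, then $S$ is a single $K_2$, which is complete, so $A=V(S)$ is the pair of endpoints of the unique missing edge. If $|M|\ge 2$, the maximal cliques of $S$ are precisely its edges, any two of which are vertex-disjoint, so $A=\emptyset$. Either way $S$ has an inner hub, and $G=C_m\setminus S$ is connected: a disconnected graph on $m$ vertices has a component of size at most $m/2$, whose vertices have degree at most $m/2-1<m-2$ when $m\ge 3$, contradicting the minimum degree hypothesis.

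It then remains to verify one of the two hypotheses of Theorem~\ref{Theorem 2}. If $|N(v_1)|=m-1$, then $v_1$ is an endpoint of no edge of $M$, so $v_1\in V(G)\setminus V(S)$ and hypothesis (i) applies, requiring only that $\mu_1$ be absolutely continuous. If instead $|N(v_1)|=m-2$, then by assumption $\mu_p$ is absolutely continuous for some $p\in I(N_G(v_1))$, and I would check that this $p$ meets the requirements of hypothesis (ii). If $|M|\ge 2$ then $A=\emptyset\subseteq N_G(v_p)$, $v_1\notin A$, and $S$ is not complete, so (ii) holds. If $|M|=1$ then $v_1$ is an endpoint of the unique missing edge, so $V(S)=A=\{v_1,v_j\}$, where $\{v_1,v_j\}$ is that edge; since $v_p$ is adjacent to $v_1$ it differs from both $v_1$ and $v_j$, hence has degree $m-1$, so $A\subseteq V(G)\setminus\{v_p\}=N_G(v_p)$, while the clause ``$v_1\notin A$'' is inactive because $S=K_2$ is complete. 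In both cases Theorem~\ref{Theorem 2} applies, and every solution to \eqref{KP} is induced by a map.

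The only genuinely delicate part is the bookkeeping for $|M|=1$, where $S$ degenerates into a complete graph so that its inner hub swells to all of $V(S)$: one must check that the neighbour $v_p$ furnished by the hypothesis still has full degree (so that $A\subseteq N_G(v_p)$), and that the condition $v_1\notin A$ is precisely the one that is switched off when $S$ is complete. It is also important to take $V(S)$ to consist only of the matched endpoints rather than all of $V(G)$, since this is what keeps hypothesis (i) available when $v_1$ has full degree, matching the first alternative of the corollary.
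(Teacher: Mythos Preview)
Your proof is correct and follows the same route as the paper: identify the missing edges as a matching, set $S$ equal to that matching, and invoke Theorem~\ref{Theorem 2}. Your treatment is in fact more careful than the paper's own argument, which simply asserts that $\bigcup_{j=1}^{l}S_j$ has inner hub $A=\emptyset$ without singling out the case $l=1$ (where, as you note, $S$ is a complete $K_2$ and $A=V(S)$), and which does not pause to verify that $G$ is connected; your explicit handling of both points is a genuine, if minor, improvement.
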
  
\begin{proof}Note that if $G\neq C_{m}$, then $G=C_{m}\setminus \bigcup S_{j=1}^{l}$, for some disjoint collection of complete graphs $\{S_{j}\}_{j=1}^{l}$, where $|V(S_{j})|=2$ for every $j$ (that is, every $S_j$ consists on a single edge). Clearly, the graph $\bigcup_{j=1}^{l} S_j$ has inner hub $A=\emptyset$ and maximal cliques $S_1, \ldots, S_l$. The result then follows from Theorem \ref{Theorem 2}.
\end{proof}
Note that the Gangbo-Swiech surplus corresponds to a complete graph, or,  equivalently, to the graph $G$ satisfying $|N(v)| =m-1$ for each $v \in V(G)$; the Corollary is then a generalization to the case where each vertex can be missing at most one edge connecting it to the other vertices.
\subsection{Examples}
Here, we illustrate the result obtained in Theorem \ref{Theorem 2} throughout several  examples.
\begin{enumerate}[label=(\roman*)]
\item Let $G$ be a complete $k$-partite graph with set partition $\{V_{1}, \ldots, V_{k}\}$ and $m:=|V(G)|=|\bigcup_{j=1}^{k}V_{j}|$. Write $\bigcup_{j=1}^{k}V_{j}=\{v_{1}, \ldots, v_{m}\}$, and let $S_{1}, \ldots, S_{k}$ be $k$ complete graphs with sets of vertices $V_{1}, \ldots, V_{k}$, respectively. Note that $G:=C_{m}\setminus \bigcup_{j=1}^{k}S_{j}$ and $ N(v_1)=\bigcup_{\underset{j\neq \alpha}{j=1}}^{k}V_{j}$, for some $\alpha \in \{1, \ldots, k\}$. Hence, by assuming $\mu_{1}$ and $\mu_{p}$ absolutely continuous, for some $p\in N(v_1)$, we can conclude by Theorem \ref{Theorem 2} that the graph $G$ gives a unique Monge solution, as we can interpret $\{S_{j}\}_{j=1}^{k}$ as the collection of maximal cliques of the graph $\bigcup_{j=1}^{k}S_{j}$. Here, $A=\emptyset$ is clearly the inner hub of $\bigcup_{j=1}^{k}S_{j}$. \\
 A special case is the complete graph $C_k$; several other examples of $k$-partite graphs are below.
\begin{itemize}
\item \underline{Complete bipartite graphs $K_{m,n}$:}

\begin{figure}[H]   
\begin{center}
\begin{subfigure}[b]{0.4\linewidth}
\begin{tikzpicture}[x=1.5cm, y=1.5cm]
	\vertex (v1) at (60:1) [label=60:$v_{1}$]{};
	\vertex (v2) at (120:1) [label=120:$v_{2}$]{};
	\vertex (v3) at (180:1) [label=180:$v_{3}$]{};
	\vertex (v4) at (240:1) [label=240:$v_{4}$]{};
	\vertex (v5) at (300:1) [label=300:$v_{5}$]{};
	\vertex (v6) at (360:1) [label=360:$v_{6}$]{};
	\path 
		(v1) edge (v2)
		(v2) edge (v3)
		(v3) edge (v4)
		(v4) edge (v5)
		(v6) edge (v5)
		(v6) edge (v1)
		(v1) edge (v4)
		(v2) edge (v5)
		(v6) edge (v3)
		;	
\end{tikzpicture}
\caption{Graph $K_{3,3}$. Known as the \textit{Utility graph}.} 
  \end{subfigure}
  \qquad \qquad
\begin{subfigure}[b]{0.4\linewidth}
\begin{tikzpicture}[x=1.5cm, y=1.5cm]
	\vertex (v1) at (45:1) [label=45:$v_{1}$]{};
	\vertex (v2) at (90:1) [label=90:$v_{2}$]{};
	\vertex (v3) at (135:1) [label=135:$v_{3}$]{};
	\vertex (v4) at (180:1) [label=180:$v_{4}$]{};
	\vertex (v5) at (225:1) [label=225:$v_{5}$]{};
	\vertex (v6) at (270:1) [label=270:$v_{6}$]{};
	\vertex (v7) at (315:1) [label=315:$v_{7}$]{};
	\vertex (v8) at (360:1) [label=360:$v_{8}$]{};
	\path 
		(v1) edge (v2)
		(v2) edge (v3)
		(v3) edge (v4)
		(v4) edge (v5)
		(v6) edge (v5)
		(v6) edge (v7)
		(v7) edge (v8)
		(v1) edge (v8)
		(v2) edge (v5)
		(v2) edge (v7)
		(v1) edge (v6)
		(v3) edge (v6)
		(v4) edge (v1)
		(v4) edge (v7)
		(v3) edge (v8)
		(v5) edge (v8)
		;	
\end{tikzpicture}
\caption{Graph $K_{4,4}$. Known as the \textit{Cayley graph}.} 
\end{subfigure}
\caption{}
\end{center}
\end{figure}
\begin{figure}[H]  
\begin{center}
\begin{tikzpicture}[x=2cm, y=2cm]
	\vertex (v1) at (36:1) [label=36:$v_{1}$]{};
	\vertex (v2) at (72:1) [label=72:$v_{2}$]{};
	\vertex (v3) at (108:1) [label=108:$v_{3}$]{};
	 \vertex (v4) at (144:1) [label=144:$v_{4}$]{};
	\vertex (v5) at (180:1) [label=180:$v_{5}$]{};
	\vertex (v6) at (216:1) [label=216:$v_{6}$]{};
	\vertex (v7) at (252:1) [label=252:$v_{7}$]{};
	\vertex (v8) at (288:1) [label=288:$v_{8}$]{};
	\vertex (v9) at (324:1) [label=324:$v_{9}$]{};
	\vertex (v10) at (360:1) [label=360:$v_{10}$]{};
	\path 
		
		(v1) edge (v7)
		(v1) edge (v8)
		(v1) edge (v6)
		(v1) edge (v9)
		
		(v2) edge (v6)
		(v2) edge (v7)
		(v2) edge (v8)
		(v2) edge (v9)
		
		(v3) edge (v6)
		(v3) edge (v7)
		(v3) edge (v9)
		(v3) edge (v8)
		
		(v4) edge (v6)
		(v4) edge (v7)
		(v4) edge (v8)
		(v4) edge (v9)
		
		(v5) edge (v6)
		(v5) edge (v7)
		(v5) edge (v8)
		(v5) edge (v9)

		(v6) edge (v10)
	
		(v9) edge (v10)	
		(v7) edge (v10)
		
		(v8) edge (v10)	
		;
\end{tikzpicture}
 \caption{Bipartite graph with set partition $\{V_{1}, V_{2}\}$, where  $V_{1}=\{v_{1}, v_{2}, v_{3}, v_{4}, v_{5}, v_{10}\}$ and $V_{2}=\{v_{6}, v_{7}, v_{8}, v_{9}\}$. } \label{fig:M1}  
\end{center}
\end{figure}
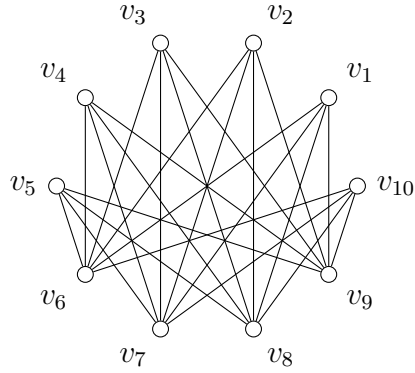

\item \underline{Complete Tripartite graphs $K_{m,n, p}$:}
\begin{figure}[H]   
\begin{center}
\begin{subfigure}[b]{0.4\linewidth}
\begin{tikzpicture}[x=1.5cm, y=1.5cm]
	\vertex (v1) at (90:1) [label=90:$v_{1}$]{};
	\vertex (v2) at (180:1) [label=180:$v_{2}$]{};
	\vertex (v3) at (270:1) [label=270:$v_{3}$]{};
	\vertex (v4) at (360:1) [label=360:$v_{4}$]{};
	\vertex (v5) at (360:0) [label=45:$v_{5}$]{};
	\path 
		(v1) edge (v2)
		(v2) edge (v3)
		(v3) edge (v4)
		(v4) edge (v1)
		(v1) edge (v4)
		(v4) edge (v3)
		(v2) edge (v5)
		(v5) edge (v4)
		(v1) edge (v5)
		(v5) edge (v3)
		;	
\end{tikzpicture}
\caption{Graph $K_{1,2,2}$. Known as the \textit{5-wheel graph}.}
  \end{subfigure}
  \qquad \qquad
\begin{subfigure}[b]{0.4\linewidth}
\begin{tikzpicture}[x=1.5cm, y=1.5cm]
	\vertex (v1) at (90:1) [label=90:$v_{1}$]{};
	\vertex (v2) at (180:1) [label=180:$v_{2}$]{};
	\vertex (v3) at (270:1) [label=270:$v_{3}$]{};
	\vertex (v4) at (360:1) [label=360:$v_{4}$]{};
	\path 
		(v1) edge (v2)
		(v2) edge (v3)
		(v3) edge (v4)
		(v4) edge (v1)
		(v2) edge (v4)
		;	
\end{tikzpicture}
\caption{Graph $K_{1,1,2}$. Known as the \textit{Diamond graph}.}
\end{subfigure}
\end{center}
\end{figure}

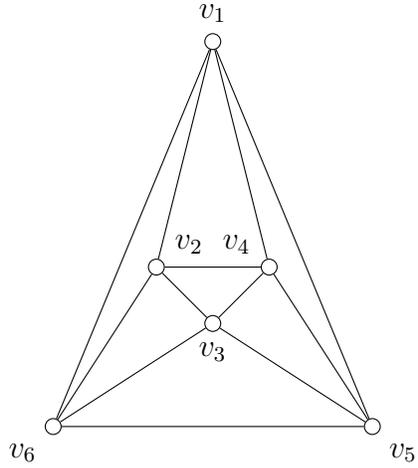
\begin{figure}[H]  
\begin{center}
\begin{tikzpicture}[x=1.5cm, y=1.5cm]
	\vertex (v1) at (90:2) [label=90:$v_{1}$]{};
	\vertex (v2) at (180:0.5) [label=20:$v_{2}$]{};
	\vertex (v3) at (270:0.5) [label=270:$v_{3}$]{};
	\vertex (v4) at (360:0.5) [label=160:$v_{4}$]{};
	\vertex (v5) at (315:2) [label=315:$v_{5}$]{};
	\vertex (v6) at (225:2) [label=225:$v_{6}$]{};
	\path 
		(v1) edge (v2)
		(v2) edge (v3)
		(v3) edge (v4)
		(v4) edge (v1)
		(v2) edge (v4)
		(v1) edge (v6)
		(v6) edge (v5)
		(v5) edge (v1)
		(v2) edge (v6)
		(v3) edge (v6)
		(v3) edge (v5)
		(v4) edge (v5)
		;	
\end{tikzpicture}
\caption{Graph $K_{2,2,2}$. Known as the \textit{Octahedral graph}.}
\end{center}
\end{figure}   
\end{itemize}

\item A notable special case of Corollary \ref{cor: one missing edge} occurs when $m$ is even and $|N(v)|=m-2$ for all $v\in V(G)$, in which case $G=K_{2, \ldots, 2(\frac{m}{2} \text{times})}$. This graph is known as the \textit{Cocktail Party Graph}. See example below.
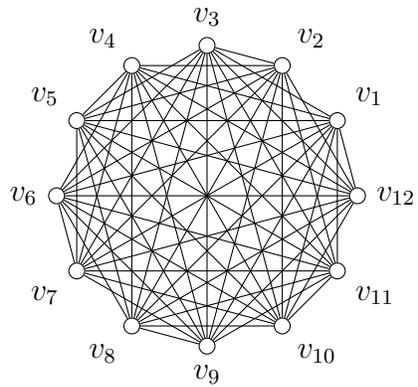
\begin{figure}[H]
\centering
\begin{center}
\begin{tikzpicture}[x=2cm, y=2cm]
\vertex (v1) at (30:1) [label=30:$v_{1}$]{};
\vertex (v2) at (60:1) [label=60:$v_{2}$]{};
\vertex (v3) at (90:1) [label=90:$v_{3}$]{};
\vertex (v4) at (120:1) [label=120:$v_{4}$]{};
\vertex (v5) at (150:1) [label=150:$v_{5}$]{};
\vertex (v6) at (180:1) [label=180:$v_{6}$]{};
\vertex (v7) at (210:1) [label=210:$v_{7}$]{};
 \vertex (v8) at (240:1) [label=240:$v_{8}$]{};
\vertex (v9) at (270:1) [label=270:$v_{9}$]{};
\vertex (v10) at (300:1) [label=300:$v_{10}$]{};
\vertex (v11) at (330:1) [label=330:$v_{11}$]{};
\vertex (v12) at (360:1) [label=360:$v_{12}$]{};
\path 
		(v1) edge (v3)
		(v1) edge (v4)
		(v1) edge (v5)
		(v1) edge (v6)
		(v1) edge (v7)
		(v1) edge (v8)
		(v1) edge (v9)
		(v1) edge (v10)
		(v1) edge (v11)
		(v1) edge (v12)
		(v2) edge (v3)
		(v2) edge (v4)
		(v2) edge (v5)
		(v2) edge (v6)
		(v2) edge (v7)
		(v2) edge (v8)
		(v2) edge (v9)
		(v2) edge (v10)
		(v2) edge (v11)
		(v2) edge (v12)
		(v3) edge (v5)
		(v3) edge (v6)
		(v3) edge (v7)
		(v3) edge (v8)
		(v3) edge (v9)
		(v3) edge (v10)
		(v3) edge (v11)
		(v3) edge (v12)
		(v4) edge (v5)
		(v4) edge (v6)
		(v4) edge (v7)
		(v4) edge (v8)
		(v4) edge (v9)
		(v4) edge (v10)
		(v4) edge (v11)
		(v4) edge (v12)
		(v5) edge (v7)
		(v5) edge (v8)
	    (v5) edge (v9)
	    (v5) edge (v10)
	    (v5) edge (v11)
	    (v5) edge (v12)
        (v6) edge (v7)
		(v6) edge (v8)
		(v6) edge (v9)
		(v6) edge (v10)
		(v6) edge (v11)
		(v6) edge (v12)
		(v7) edge (v9)
		(v7) edge (v10)
		(v7) edge (v11)
		(v7) edge (v12)
		(v8) edge (v9)
		(v8) edge (v10)
		(v8) edge (v11)
		(v8) edge (v12)	
		(v9) edge (v11)	
		(v9) edge (v12)	
		(v10) edge (v11)
		(v10) edge (v12)		
		;
\end{tikzpicture}
 \caption{A Cocktail Party Graph with $m=12$. } \label{fig:M1}  
\end{center}
\end{figure} 

\item  Theorem \ref{Theorem 2} can be used easily to construct many other, more obscure, graphs leading to Monge solutions.  We construct one of such examples here; set
\begin{flalign*}
V_{1}&=\{ v_{1}, v_{14},v_{15},v_{16},v_{17},v_{18},v_{19},v_{20}\}, \nonumber\\
V_{2}&= \{v_{9},v_{10},v_{11},v_{12},v_{13},v_{14},v_{15},v_{16}\},\nonumber\\
V_{3}&= \{v_{6},v_{14},v_{15},v_{16}\},\nonumber\\
V_{4}&= \{v_{4},v_{14},v_{15},v_{16}\}.
\end{flalign*} 
Consider  $ S_{1}, S_{2}, S_{3}, S_{4}$ complete graphs with $V(S_j)=V_j$, $j=1,2,3,4$. Then, the graph $S=S_1\cup S_2\cup S_3\cup S_4$ has inner hub $A=\{v_{14},v_{15},v_{16}\}$, with maximal cliques  $S_{1}, S_{2}, S_{3}, S_{4}$. See Figure below.
 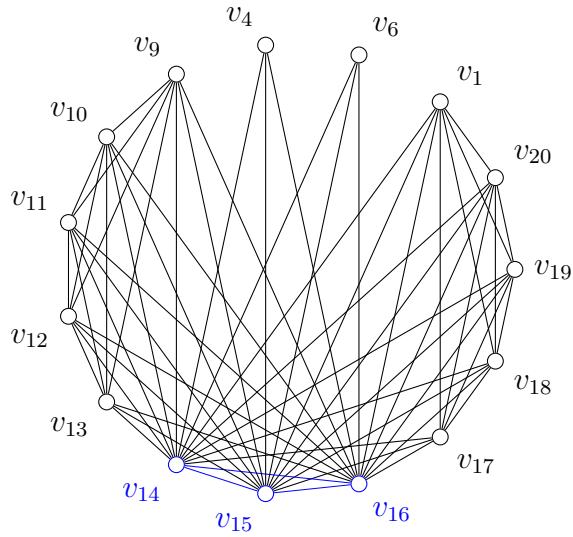
\begin{figure}[H]
\centering
\begin{center}
\begin{tikzpicture}[x=3cm, y=3cm]
\vertex (v20) at (24:1) [label=24:$v_{20}$]{};
\vertex (v1) at (48:1) [label=48:$v_{1}$]{};
\vertex (v6) at (72:1) [label=72:$v_{6}$]{};
\vertex (v4) at (96:1) [label=96:$v_{4}$]{};
\vertex (v9) at (120:1) [label=120:$v_{9}$]{};
\vertex (v10) at (144:1) [label=144:$v_{10}$]{};
\vertex (v11) at (168:1) [label=168:$v_{11}$]{};
 \vertex (v12) at (192:1) [label=192:$v_{12}$]{};
\vertex (v13) at (216:1) [label=216:$v_{13}$]{};
{\blue \vertex (v14) at (240:1) [label=240:$v_{14}$]{};
\vertex (v15) at (264:1) [label=264:$v_{15}$]{};
\vertex (v16) at (288:1) [label=288:$v_{16}$]{};}
\vertex (v17) at (312:1) [label=312:$v_{17}$]{};
\vertex (v18) at (336:1) [label=336:$v_{18}$]{};
\vertex (v19) at (360:1) [label=360:$v_{19}$]{};
\path 
		(v1) edge (v20)
		(v1) edge (v19)
		(v1) edge (v18)
		(v1) edge (v17)
		(v1) edge (v16)
		(v1) edge (v15)
		(v1) edge (v14)
		(v20) edge (v19)
		(v20) edge (v18)
		(v20) edge (v17)
		(v20) edge (v16)
		(v20) edge (v15)
		(v20) edge (v14)
		(v19) edge (v18)
		(v19) edge (v17)
		(v19) edge (v16)
		(v19) edge (v15)
		(v19) edge (v14)
		(v18) edge (v15)
		(v18) edge (v16)
		(v18) edge (v17)
		(v18) edge (v14)
		(v17) edge (v16)
		(v17) edge (v15)
		(v17) edge (v14)
		(v16) edge [ultra thin, draw=blue,-](v14)
		(v14) edge [ultra thin, draw=blue,-](v15)
		(v9) edge (v10)
		(v9) edge (v11)
		(v9) edge (v12)
		(v9) edge (v13)
		(v9) edge (v14)
		(v9) edge (v15)
		(v9) edge (v16)
		(v10) edge (v11)
		(v10) edge (v12)
		(v10) edge (v13)
		(v10) edge (v14)
		(v10) edge (v15)
		(v10) edge (v16)
		(v11) edge (v12)
		(v11) edge (v13)
		(v11) edge (v14)
		(v11) edge (v15)
		(v11) edge (v16)
		(v12) edge (v13)
		(v12) edge (v14)
		(v12) edge (v15)
		(v12) edge (v16)
		(v13) edge (v14)
		(v13) edge (v15)
		(v13) edge (v16)
		(v14) edge (v4)
		(v14) edge (v6)
		(v15) edge (v4)
		(v15) edge (v6)
		(v15) edge [ultra thin, draw=blue,-](v16)
		(v16) edge (v4)
		(v16) edge (v6)

		;
\end{tikzpicture}
 \caption{Graph $S=S_{1}\cup S_{2} \cup S_{3}\cup S_{4}.$} \label{fig:M1}  
\end{center}
\end{figure} 
Then, $G=C_{20}\setminus S$ provides a solution of Monge type. 
\begin{figure}[H]
\centering
\begin{center}
\begin{tikzpicture}[x=3cm, y=3cm]
 \vertex (v1) at (18:1) [label=18:$v_{1}$]{};
\vertex (v2) at (36:1) [label=36:$v_{2}$]{};
\vertex (v3) at (54:1) [label=54:$v_{3}$]{};
\vertex (v4) at (72:1) [label=72:$v_{4}$]{};
\vertex (v5) at (90:1) [label=90:$v_{5}$]{};
\vertex (v6) at (108:1) [label=108:$v_{6}$]{};
\vertex (v7) at (126:1) [label=126:$v_{7}$]{};
 \vertex (v8) at (144:1) [label=144:$v_{8}$]{};
 \vertex (v9) at (162:1) [label=162:$v_{9}$]{};
\vertex (v10) at (180:1) [label=180:$v_{10}$]{};
\vertex (v11) at (198:1) [label=198:$v_{11}$]{};
\vertex (v12) at (216:1) [label=216:$v_{12}$]{};
\vertex (v13) at (234:1) [label=234:$v_{13}$]{};
{\blue\vertex (v14) at (252:1) [label=252:$v_{14}$]{};
\vertex (v15) at (270:1) [label=270:$v_{15}$]{};
\vertex (v16) at (288:1) [label=288:$v_{16}$]{};}
\vertex (v17) at (306:1) [label=306:$v_{17}$]{};
 \vertex (v18) at (324:1) [label=324:$v_{18}$]{};
\vertex (v19) at (342:1) [label=342:$v_{19}$]{};
\vertex (v20) at (360:1) [label=360:$v_{20}$]{};
\path 
		(v1) edge (v2)
		(v1) edge (v3)
		(v1) edge (v4)
		(v1) edge (v5)
		(v1) edge (v6)
		(v1) edge (v7)
		(v1) edge (v8)
		(v1) edge (v9)
		(v1) edge (v10)
		(v1) edge (v11)
		(v1) edge (v12)
		(v1) edge (v13)
		(v2) edge (v3)
		(v2) edge (v4)
		(v2) edge (v5)
		(v2) edge (v6)
		(v2) edge (v7)
		(v2) edge (v8)
		(v2) edge (v9)
		(v2) edge (v10)
		(v2) edge (v11)
		(v2) edge (v12)
		(v2) edge (v13)
		(v2) edge (v14)
		(v2) edge (v15)
		(v2) edge (v16)
		(v2) edge (v17)
		(v2) edge (v18)
		(v2) edge (v19)
		(v2) edge (v20)
		(v3) edge (v4)
		(v3) edge (v5)
		(v3) edge (v6)
		(v3) edge (v7)
		(v3) edge (v8)
		(v3) edge (v9)
		(v3) edge (v10)
		(v3) edge (v11)
		(v3) edge (v12)
		(v3) edge (v13)
		(v3) edge (v14)
		(v3) edge (v15)
		(v3) edge (v16)
		(v3) edge (v17)
		(v3) edge (v18)
		(v3) edge (v19)
		(v3) edge (v20)
		(v4) edge (v5)
		(v4) edge (v7)
		(v4) edge (v8)
		(v4) edge (v9)
		(v4) edge (v10)
		(v4) edge (v11)
		(v4) edge (v12)
		(v4) edge (v13)
		(v4) edge (v17)
		(v4) edge (v18)
		(v4) edge (v19)
		(v4) edge (v20)
		(v5) edge (v6)
		(v5) edge (v7)
		(v5) edge (v8)
	    (v5) edge (v9)
	    (v5) edge (v10)
	    (v5) edge (v11)
	    (v5) edge (v12)
	    (v5) edge (v13)
	    (v5) edge (v14)
	    (v5) edge (v15)
	    (v5) edge (v16)
	    (v5) edge (v17)
	    (v5) edge (v18)
	    (v5) edge (v19)
	    (v5) edge (v20)
        (v6) edge (v7)
		(v6) edge (v8)
		(v6) edge (v9)
		(v6) edge (v10)
		(v6) edge (v11)
		(v6) edge (v12)
		(v6) edge (v13)
		(v6) edge (v17)
		(v6) edge (v18)
		(v6) edge (v19)
		(v6) edge (v20)
		(v7) edge (v8)
		(v7) edge (v9)
		(v7) edge (v10)
		(v7) edge (v11)
		(v7) edge (v12)
		(v7) edge (v13)
		(v7) edge (v14)
		(v7) edge (v15)
		(v7) edge (v16)
		(v7) edge (v17)
		(v7) edge (v18)
		(v7) edge (v19)
		(v7) edge (v20)
		(v8) edge (v9)
		(v8) edge (v10)
		(v8) edge (v11)
		(v8) edge (v12)	
		(v8) edge (v13)	
		(v8) edge (v14)	
		(v8) edge (v15)
		(v8) edge (v16)
		(v8) edge (v17)
		(v8) edge (v18)
		(v8) edge (v19)
		(v8) edge (v20)
		(v9) edge (v17)
		(v9) edge (v18)
		(v9) edge (v19)
		(v9) edge (v20)
		(v10) edge (v17)
		(v10) edge (v18)
		(v10) edge (v19)
		(v10) edge (v20)
		(v11) edge (v17)
		(v11) edge (v18)
		(v11) edge (v19)
		(v11) edge (v20)
		(v12) edge (v17)
		(v12) edge (v18)
		(v12) edge (v19)
		(v12) edge (v20)
	    (v13) edge (v17)	
	    (v13) edge (v18)	
	    (v13) edge (v19)	
	    (v13) edge (v20)		

		;
\end{tikzpicture}
 \caption{Graph $G=C_{20}\setminus S$.} \label{fig:M1}  
\end{center}
\end{figure}
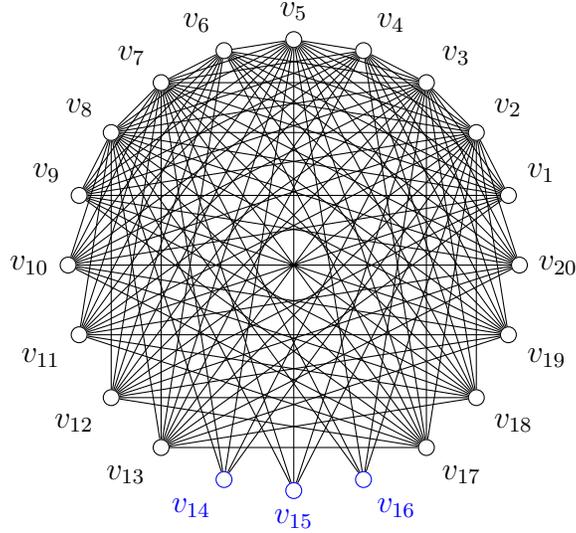 
 
\end{enumerate}

 \section{\textbf{Monge solutions for graphs with inner hubs and gluing of them
 }}\label{Section 4}
 The main result of this section (Theorem \ref{Theorem 1}) ensures that under regularity conditions on two of the marginals, the surplus associated to a graph with inner hub provides a unique solution  for the Monge-Kantorovich problem.\par
 
Before stating the main result of this section, we present the following simple example, which illustrates part of the motivation for Theorem \ref{Theorem 1} and Propositions \ref{Proposition 2} and \ref{Proposition 3}.

 \begin{example}\label{eqn:1.2-54}
 	Let $b$ be the surplus associated to the graph $G$ below.
 	\begin{figure}[H]  
 		\begin{center}
 			\begin{tikzpicture}[x=2cm, y=2cm]
 			\vertex (v1) at (36:1) [label=36:$v_{1}$]{};
 			\vertex (v2) at (72:1) [label=72:$v_{2}$]{};
 			\vertex (v4) at (180:1) [label=180:$v_{4}$]{};
 			\vertex (v3) at (288:1) [label=288:$v_{3}$]{};
 			\vertex (v5) at (330:1.5) [label=360:$v_{5}$]{};
 			\path 
 			(v1) edge (v2)
 			(v1) edge (v3)
 			(v1) edge (v5)
 			(v2) edge (v3)
 			(v3) edge (v4)				
 			;
 			\end{tikzpicture}
 		\end{center}
 	\end{figure}
 The second assertion of Proposition \ref{prop: non ToSS} implies that $b$ is not twisted on splitting sets, and  there are in fact choices $\mu_1, \mu_2, \mu_3$ and $\mu_4$ of marginals such that $\mu_1$ is absolutely continuous with respect to Lebesgue measure and the solution to \eqref{KP} is of non-Monge form and non-unique (explicitly, take $\mu_3$ to be a Dirac mass and the other marginals to be uniform on bounded domains).  However, it is clear that the problem does admit a unique, Monge type solution as soon as both $\mu_1$ and $\mu_3$ are absolutely continuous.  The reason for this is one may solve the three marginal problem with $\mu_1$, $\mu_2$, and $\mu_3$ and the surplus $x_1 \cdot x_2 +x_1 \cdot x_3+x_2 \cdot x_3$ via the Gangbo-\'{S}wi\c{e}ch theorem \cite{Gangbo}, obtaining unique optimal maps $T_2, T_3$, and then solve independently the two marginal problems between $\mu_3$ and $\mu_4$ with surplus $x_3\cdot x_4$, yielding a unique optimal map $\bar T_4$, and between $\mu_1$ and $\mu_5$, with surplus $x_1 \cdot x_5$, yielding a unique optimal map $T_5$.   
 Since $x_4$ only interacts with $x_3$, and $x_5$ only interacts with $x_1$,  $(T_2,T_3,T_4,T_5):= (T_2,T_3,\bar T_4 \circ T_3, T_5)$ is then the unique Monge solution for the overall problem.
 
 This sort of result is not captured by Theorem \ref{Theorem 2}, as the graph extracted from the complete graph $C_5$ to yield $G$, depicted below:
 
 \begin{figure}[H]  
 		\begin{center}
 			\begin{tikzpicture}[x=2cm, y=2cm]
 			\vertex (v1) at (36:1) [label=36:$v_{1}$]{};
 			\vertex (v2) at (72:1) [label=72:$v_{2}$]{};
 			\vertex (v4) at (180:1) [label=180:$v_{4}$]{};
 			\vertex (v3) at (288:1) [label=288:$v_{3}$]{};
 			\vertex (v5) at (330:1.5) [label=360:$v_{5}$]{};
 			\path 
 			(v1) edge (v4)
 			(v2) edge (v4)
 			(v2) edge (v5)
 			(v3) edge (v5)
 			(v4) edge (v5)				
 			;
 			\end{tikzpicture}
 		\end{center}
 	\end{figure} 
does not have an inner hub; we develop in this section a framework that encapsulates simple examples like this one, as well as more complicated ones which cannot be treated with  adhoc  arguments like the one sketched above.

 
 \end{example}
\subsection{Monge solutions for graphs with inner hubs}
We now proceed to state and prove our second main result.
\begin{theorem}\label{Theorem 1}
Let $G$ be a graph with inner hub $A$ and maximal cliques $S_1, \ldots, S_l$, with $m=|V(G)|$, and $b$ its associated surplus. Let $\mu_{i}$ be probability measures over $X_{i}$, $i=1,\ldots, m$, with $\mu_{1}$ absolutely continuous  with respect to  $\mathcal{L}^{n}$. If there exists $p\in I(A)$ such that $\mu_{p}$ is absolutely continuous with respect to  $\mathcal{L}^{n}$, then every solution to the Kantorovich problem \eqref{KP} with surplus $b$ is induced by a map.
\end{theorem}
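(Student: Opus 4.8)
The plan is to follow the template of the proof of Theorem~\ref{Theorem 2}. Let $\gamma$ be a solution of \eqref{KP} and $(u_1,\dots,u_m)$ a $b$-conjugate solution of \eqref{DP}, and set
\[
\widetilde W_p=\Bigl\{(x_1,\dots,x_m): Du_1(x_1)\text{ and }Du_p(x_p)\text{ exist, and }\sum_{i=1}^m u_i(x_i)=b(x_1,\dots,x_m)\Bigr\}.
\]
Since $\mu_1$ and $\mu_p$ are absolutely continuous and the $u_i$ are locally Lipschitz, $\gamma(\widetilde W_p)=1$. Fixing $x_1^0$ in the support of $\mu_1$ at which $u_1$ is differentiable, it suffices, exactly as in Theorem~\ref{Theorem 2}, to show that the map
\[
(x_2,\dots,x_m)\longmapsto D_{x_1}b(x_1^0,x_2,\dots,x_m)=\sum_{s\in I(N(v_1))}x_s
\]
is injective on the slice $\{(x_2,\dots,x_m):(x_1^0,x_2,\dots,x_m)\in\widetilde W_p\}$: the relation $Du_1(x_1^0)=\sum_{s\in I(N(v_1))}x_s$ then determines $(x_2,\dots,x_m)$ from $x_1^0$ on $\widetilde W_p$, so $\gamma$ is concentrated on a graph. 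Thus I take two points $(x_1^0,x_2^1,\dots,x_m^1),(x_1^0,x_2^2,\dots,x_m^2)\in\widetilde W_p$ with $\sum_{s\in I(N(v_1))}x_s^1=\sum_{s\in I(N(v_1))}x_s^2$ and must prove $x_s^1=x_s^2$ for every $s$.

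The structural facts that drive the argument are these: writing $B_j=V(S_j)\setminus A$ and using $A=\bigcap_j V(S_j)$, one has $N(v_s)=V(S_j)\setminus\{v_s\}$ (so $\overline N(v_s)=V(S_j)$) for $s\in I(B_j)$, and $N(v_s)=V(G)\setminus\{v_s\}$ (so $\overline N(v_s)=V(G)$) for $s\in I(A)$; moreover the hypothesis forces $A\neq\emptyset$ (hence $G$ connected), and $l=1$ is the Gangbo--\'{S}wi\c{e}ch case \cite{Gangbo}. I distinguish two cases. If $v_1\in A$, take $p=1$: then $x_1^1=x_1^2$ and $Du_1(x_1^0)$ exists, so part~\ref{Part a} of Lemma~\ref{Lemma:1} pins every coordinate indexed by $A$ (all vertices of $A$, and only these, satisfy $\overline N=V(G)=\overline N(v_1)$), while the equality at $v_1$ gives $\sum_{s\in I(V(G))}x_s^1=\sum_{s\in I(V(G))}x_s^2$. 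If $v_1\in B_k$, the equality at $v_1$ together with $x_1^1=x_1^2$ gives $\sum_{s\in I(V(S_k))}x_s^1=\sum_{s\in I(V(S_k))}x_s^2$; since $\overline N(v_t)=V(S_k)$ for every $t\in I(B_k)$, part~\ref{Part 4} of Lemma~\ref{Lemma:1} pins all coordinates indexed by $B_k$, and hence also $\sum_{s\in I(A)}x_s^1=\sum_{s\in I(A)}x_s^2$.

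It remains to propagate these equalities outward through the remaining maximal cliques (and, in the second case, through the rest of $A$), which is the technical heart of the proof. As in the proof of Theorem~\ref{Theorem 2}, I would do this by an inductive use of Lemma~\ref{Lemma:1}: part~\ref{Part 1}, to manufacture auxiliary points of $\widetilde W_p$ in which already-determined coordinates have been substituted; part~\ref{Part 2} together with part~\ref{Part 4}, to reduce the problem on each clique $S_j$ to pinning the single vector $\sum_{s\in I(V(S_j))}x_s$; and part~\ref{Part b}, exploiting the differentiability of $u_p$ at $x_p$ once enough of the hub $A$ has been pinned, to reach the cliques not containing $v_1$. The main obstacle is precisely this propagation: in contrast with Theorem~\ref{Theorem 2}, where the non-hub vertices of an extracted clique share a common large neighborhood — so that part~\ref{Part b} of Lemma~\ref{Lemma:1} applies to a whole clique at once — here the vertices of $B_j$ have pairwise distinct neighborhoods $V(S_j)\setminus\{v_s\}$, so part~\ref{Part b} must be invoked one vertex at a time and the cliques ordered carefully, with precise bookkeeping of which coordinates are already known to agree so that the hypotheses of each invocation hold. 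Once the displayed map is shown to be injective, every solution of \eqref{KP} is induced by a map; uniqueness then follows from the standard argument given in the fifth section.
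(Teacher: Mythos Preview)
Your setup and the first reduction are exactly right, and the case split agrees with the paper: when $v_1\in B_k$ you correctly obtain $x_s^1=x_s^2$ on $I(B_k)$ and hence $\sum_{s\in I(A)}x_s^1=\sum_{s\in I(A)}x_s^2$; when $v_1\in A$ you correctly pin all of $A$ via part~\ref{Part a}. But the portion you label the ``technical heart'' is not just incomplete bookkeeping---it is missing the paper's key idea, and the obstacle you identify is not the real one.

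In the case $v_1\in B_k$, after your first step you know only that the \emph{sum} over $A$ agrees; you have not pinned any individual hub coordinate, and in particular you cannot yet invoke part~\ref{Part b} at $p\in I(A)$ since you do not know $x_p^1=x_p^2$. The paper's decisive move here is to apply part~\ref{Part 2} at every $t\in I(A)$, use $N(v_t)=V(G)\setminus\{v_t\}$ and the already-known equality $\sum_{I(A)}x_s^1=\sum_{I(A)}x_s^2$ to rewrite each inequality as
\[
\|x_t^2-x_t^1\|^2+(x_t^2-x_t^1)\cdot\sum_{s\in\bigcup_j I(B_j)}(x_s^1-x_s^2)\le 0,
\]
and then \emph{sum these over $t\in I(A)$}: the dot-product terms collapse to zero by the same sum identity, forcing $x_t^1=x_t^2$ for every $t\in I(A)$. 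Only now, with $x_p^1=x_p^2$ and $Du_p(x_p^1)$ existing, does differentiability of $u_p$ give $\sum_{s\in\bigcup_j I(B_j)}x_s^1=\sum_{s\in\bigcup_j I(B_j)}x_s^2$. This summing trick over the hub is the crux, and your sketch does not contain it.

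Once the whole hub is pinned, the remaining cliques are handled by a swapping construction (closer to part~\ref{Part 1} than to part~\ref{Part b}): for fixed $k$, replace the $\bigcup_{j\neq k}B_j$ coordinates of $x^2$ by those of $x^1$ to get a new $y\in\widetilde W_p$, apply the previously established equality $\sum_{\bigcup_j I(B_j)}(\cdot)$ to the pair $(x^1,y)$, and read off $\sum_{I(B_k)}x_s^1=\sum_{I(B_k)}x_s^2$. Since $\overline N(v_t)=V(S_k)$ for every $t\in I(B_k)$, part~\ref{Part 4} then pins all of $B_k$ \emph{at once}. So your worry that ``the vertices of $B_j$ have pairwise distinct neighborhoods, so part~\ref{Part b} must be invoked one vertex at a time'' is misplaced: the distinct neighborhoods are exactly what make part~\ref{Part 4} apply directly, and part~\ref{Part b} is in fact never used in this proof.
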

\begin{proof}
 Let $\gamma$ be a solution to the Kantorovich problem with surplus $b$ and $(u_{1}, \ldots, u_{m})$ a $b$-conjugate solution to its dual. Set:
\begin{align*}
\widetilde{W_{p}}&=\Big\{ (x_{1},\ldots, x_{m}): Du_{1}(x_{1})\quad \text{and} \quad  Du_{p}(x_{p})\quad\text{exist,}\quad \text{and} \quad\sum_{i=1}^{m}u_{i}(x_{i})=b(x_{1},\ldots, x_{m})\Big\}.
\end{align*}
As in Theorem \ref{Theorem 2}, we obtain $\gamma(\widetilde{W_{p}})=1$. Moreover, by fixing $x_{1}^{0}$ where $u_{1}(x_{1})$ is differentiable, we get for any $(m-1)$-tuple $(x_{2}^{0},\ldots, x_{m}^{0})$ satisfying $(x_{1}^{0},\ldots, x_{m}^{0})\in \widetilde{W_{p}}$, 
\begin{equation*}
Du_{1}(x_{1}^{0})= D_{x_{1}}b(x_{1}^{0},\ldots, x_{m}^{0}).
\end{equation*}
Let us show that the map $$(x_{2},\ldots, x_{m})\mapsto D_{x_{1}}b(x_{1}^{0},x_{2},\ldots, x_{m})$$ is injective on $\widetilde{W}_{x_{1p}^{0}}:=\left\lbrace( x_{2}, \ldots, x_{m}): (x_{1}^{0}, x_{2}, \ldots, x_{m})\in \widetilde{W_{p}}\right\rbrace$.
Indeed, assume
\begin{equation}\label{eqn:1.2-8}
D_{x_{1}}b(x_{1}^{0},x_{2}^{1},\ldots, x_{m}^{1})=\sum_{s\in I(N(v_1))}x_{s}^{1}=\sum_{s\in I(N(v_1))}x_{s}^{2}=D_{x_{1}}b(x_{1}^{0},x_{2}^{2},\ldots, x_{m}^{2}),
\end{equation} 
where $(x_{1}^{0},x_{2}^{1},\ldots, x_{m}^{1}),(x_{1}^{0},x_{2}^{2},\ldots, x_{m}^{2})\in \widetilde{W_{p}}$, and $x_{1}^{0}:=x_{1}^{1}:=x_{1}^{2}$. Recall that $G=\bigcup_{j=1}^{l}S_j$, and without lost of generality assume $v_{1}\in V(S_{1})$.  Then $v_{1}\in B_{1}$ or $v_{1}\in A $, where $B_{j}=V(S_{j})\setminus A$, $j\in \{1, \ldots, l\}$.  For the case $v_{1}\in B_{1}$, we split the proof into several steps.
\begin{enumerate}[label=\textbf{Step \arabic*.}]
\item Since $S_{1}$ is complete, for every $s\in I(B_{1})$, $N(v_s)= V(S_{1})\setminus \{v_{s}\}$, which implies $\overline{N}(v_1)=\overline{N}(v_s)$. Then, by part \ref{Part a} of Lemma \ref{Lemma:1} we get
\begin{equation}\label{eqn:1.2-19}
x_{s}^{1}=x_{s}^{2}\quad \text{for all}\quad s\in I(B_{1}).
\end{equation}
Hence, the equalities  $N(v_1)= V(S_{1})\setminus \{v_{1}\}=\left( B_{1}\cup A\right)\setminus \{v_{1}\}$, and \eqref{eqn:1.2-8}, show that
\begin{equation}\label{eqn:1.2-9}
\sum_{s\in I(A)}x_{s}^{1}=\sum_{s\in I(A)}x_{s}^{2}.
\end{equation}
\item From part \ref{Part 2} of Lemma \ref{Lemma:1}, for every  $t\in I\left( A\right)$ we get
\begin{equation}\label{eqn:1.2-10}
\left(x_{t}^{2}-x_{t}^{1}\right)\cdot\sum_{s\in I(N(v_t))}\left(x_{s}^{1}-x_{s}^{2}\right )\leq 0,
\end{equation}
and by the definition of $A$, 
\begin{flalign}\label{eqn:1.2-12}
N(v_t)& =V(G)\setminus \{v_{t}\} \nonumber\\
    &=\left(\bigcup_{j=1}^{l} V(S_{j})\right)\setminus\{v_{t}\}\nonumber\\
    &= \left( A\setminus\{v_{t}\}\right)\bigcup\left(\bigcup_{j=1}^{l} B_{j}\right).
\end{flalign} 
Thus, we can write \eqref{eqn:1.2-10} as
\begin{equation*}
\left(x_{t}^{2}-x_{t}^{1}\right)\cdot\sum_{s\in I(A)\setminus\{t\}}\left(x_{s}^{1}-x_{s}^{2}\right )+\left(x_{t}^{2}-x_{t}^{1}\right)\cdot\sum_{s\in \bigcup_{j=1}^{l} I(B_{j})}\left(x_{s}^{1}-x_{s}^{2}\right )\leq 0.
\end{equation*}
It follows from \eqref{eqn:1.2-9} that
\begin{equation}\label{eqn:1.2-14}
\Vert x_{t}^{2}-x_{t}^{1}\Vert^{2} +\left(x_{t}^{2}-x_{t}^{1}\right)\cdot\sum_{s\in \bigcup_{j=1}^{l} I(B_{j})}\left(x_{s}^{1}-x_{s}^{2}\right )\leq 0,
\end{equation}
 hence, one easily deduces
 \begin{equation}\label{eqn:1.2-11}
\left(x_{t}^{2}-x_{t}^{1}\right)\cdot\sum_{s\in \bigcup_{j=1}^{l} I(B_{j})}\left(x_{s}^{1}-x_{s}^{2}\right )\leq 0.
\end{equation}
Summing over $t\in I(A)$ we get
 \begin{equation*}
\sum_{t\in I(A)}\left(x_{t}^{2}-x_{t}^{1}\right)\cdot\sum_{s\in \bigcup_{j=1}^{l} I(B_{j})}\left(x_{s}^{1}-x_{s}^{2}\right )\leq 0,
\end{equation*}
and by \eqref{eqn:1.2-9}, we must have equality in \eqref{eqn:1.2-11} for every $t\in I(A)$. Therefore, from \eqref{eqn:1.2-14} we get
\begin{equation}\label{eqn:1.2-15}
x_{t}^{1}=x_{t}^{2}\quad \text{for all}\quad t\in I(A).
\end{equation}   In particular, $x_{p}^{1}=x_{p}^{2}$ and so, $x_{p}^{2}$ belongs to 
$$\text{Argmax}\Big\{x_{p} \mapsto \Big (\sum_{{s\in I(N(v_p))}}x_{s}^{1}\Big )\cdot x_{p}-u_{p}(x_{p})\Big\}\bigcap \text{Argmax}\Big\{x_{p} \mapsto \Big (\sum_{{s\in I(N(v_p))}}x_{s}^{2}\Big )\cdot x_{p}-u_{p}(x_{p})\Big\}.$$ It follows that
$$\sum_{{s\in I(N(v_p))}}x_{s}^{1}=Du_{p}(x_{p}^{2})=\sum_{{s\in I(N(v_p))}}x_{s}^{2},$$
or equivalently, invoking \eqref{eqn:1.2-12}, 
$$\sum_{{s\in  I(A)\setminus\{p\}}}x_{s}^{1} + \sum_{{s\in \bigcup_{j=1}^{l} I(B_{j})}}x_{s}^{1}=\sum_{{s\in  I(A)\setminus\{p\}}}x_{s}^{2} + \sum_{{s\in \bigcup_{j=1}^{l} I(B_{j})}}x_{s}^{2}.$$
It immediately implies by \eqref{eqn:1.2-15} that
\begin{equation}\label{eqn:1.2-13}
\sum_{{s\in \bigcup_{j=1}^{l} I(B_{j})}}x_{s}^{1}=\sum_{{s\in \bigcup_{j=1}^{l} I(B_{j})}}x_{s}^{2}.
\end{equation} 
\item Fix $k\in \{2, \ldots, l\}$. From definition \ref{Def 1}, $\left\lbrace B_{j}\right\rbrace_{j=1}^{l}$  is a disjoint collection of sets and every $j\in \{1, \ldots, l\}$ satisfies $N(v_s)=V(S_{j})\setminus \{v_{s}\}=\left( B_{j}\cup A\right)\setminus \{v_{s}\}$, for every  $s\in I(B_{j})$. Since $(x_{1}^{0},x_{2}^{1},\ldots, x_{m}^{1})\in \widetilde{W_{p}}$, we get
\begin{align*}
\left\lbrace x_{s}^{1}\right\rbrace_{s\in \bigcup_{\underset{j\neq k}{j=1}}^{l} I(B_{j})}&\in\text{Argmax}\Bigg\{\left\lbrace x_{s}\right\rbrace_{s\in \bigcup_{\underset{j\neq k}{j=1}}^{l} I(B_{j})} \mapsto \Big (\sum_{{s\in I(A)}}x_{s}^{1}\Big )\cdot \sum_{s\in \bigcup_{\underset{j\neq k}{j=1}}^{l} I(B_{j})}x_{s}\\ &
\qquad\qquad\qquad\qquad\qquad\quad \qquad+ \sum_{\underset{j\neq k}{j=1}}^{l}\sum_{\underset{s<t }{s,t\in  I(B_{j})}}x_{s}\cdot x_{t} -\sum_{s\in \bigcup_{\underset{j\neq k}{j=1}}^{l} I(B_{j})}u_{s}(x_{s})\Bigg\},\end{align*}
and by \eqref{eqn:1.2-15},
\begin{align*}
\left\lbrace x_{s}^{1}\right\rbrace_{s\in \bigcup_{\underset{j\neq k}{j=1}}^{l} I(B_{j})}&\in\text{Argmax}\Bigg\{\left\lbrace x_{s}\right\rbrace_{s\in \bigcup_{\underset{j\neq k}{j=1}}^{l} I(B_{j})} \mapsto \Big (\sum_{{s\in I(A)}}x_{s}^{2}\Big )\cdot \sum_{s\in \bigcup_{\underset{j\neq k}{j=1}}^{l} I(B_{j})}x_{s}\\ &
\qquad\qquad\qquad\qquad\qquad\quad \qquad+ \sum_{\underset{j\neq k}{j=1}}^{l}\sum_{\underset{s<t }{s,t\in  I(B_{j})}}x_{s}\cdot x_{t} -\sum_{s\in \bigcup_{\underset{j\neq k}{j=1}}^{l} I(B_{j})}u_{s}(x_{s})\Bigg\}.
\end{align*}
Hence, setting $y:=(y_{1}, y_{2}, \ldots, y_{m})$ with
\[y_{s}= \begin{cases} 
      x_{s}^{2} & \text{if}\quad s\in \{1, 2, \ldots, m\}\setminus \bigcup_{\underset{j\neq k}{j=1}}^{l} I(B_{j})= I(V(S_{k}))\\
      x_{s}^{1} & \text{if}\quad  s\in \bigcup_{\underset{j\neq k}{j=1}}^{l} I(B_{j}), \\
      
   \end{cases}
\]
we get $y \in \widetilde{W_{p}}$, as $(x_{1}^{0},x_{2}^{2},\ldots, x_{m}^{2})\in \widetilde{W_{p}}$. Since \eqref{eqn:1.2-13} holds true for every  $(x_{1}^{0},x_{2}^{1},\ldots, x_{m}^{1})$, $ (x_{1}^{0},x_{2}^{2},\ldots, x_{m}^{2})\in\widetilde{W_{p}}$; in particular, it is true for $(x_{1}^{0},x_{2}^{1},\ldots, x_{m}^{1})$ and $y$, which implies that
\begin{equation*}
\sum_{{s\in I(B_{k})}}x_{s}^{1}=\sum_{{s\in  I(B_{k})}}x_{s}^{2}.
\end{equation*}
Using \eqref{eqn:1.2-15} we can write the above equality as
\begin{equation*}
\sum_{{s\in I(V(S_{k}))}}x_{s}^{1}=\sum_{{s\in  I(V(S_{k}))}}x_{s}^{2}.
\end{equation*}

 Hence, all the elements of $I(B_{k})$ satisfy \eqref{eqn:1.2-1}, as each $s\in I(B_{k})$ satisfies $N(v_s)=V(S_{k})\setminus \{v_{s}\}$. Then, by part \ref{Part 4} of Lemma \ref{Lemma:1}, $x_{s}^{1}=x_{s}^{2}$ for all $s\in  I(B_{k})$. We thus conclude by \eqref{eqn:1.2-19} and \eqref{eqn:1.2-15} that $x_{s}^{1}=x_{s}^{2}$ for all $s\in \bigcup_{j=1}^{l} I(B_{j})\cup I(A)=I(G)=\{1,2,\ldots, m\}$. This completes the proof for the case $v_{1}\in B_{1}$.
\end{enumerate}
\par
Finally, for the case $v_{1}\in A$, note that every $s\in I(A)$ satisfies $N(v_s)= V(G)\setminus \{v_{s}\}$, hence for any $s\in I(A)$ we get 
$$\overline{N}(v_s)=\{v_{s}\}\cup N(v_s)= V(G)=\left(V(G)\setminus \{v_{1}\}\right)\cup \{v_{1}\}=N(v_1)\cup\{v_{1}\}=\overline{N}(v_1).$$
Therefore, by part \ref{Part a} of Lemma \ref{Lemma:1} we get \eqref{eqn:1.2-15}, and then,  \eqref{eqn:1.2-8} reduces to \eqref{eqn:1.2-13}. The rest of the proof runs exactly as the proof in Step 3, but instead of fixing $k$ in $\{2, \ldots, l\}$, we fix it in $\{1, \ldots, l\}$, completing the proof of the theorem.
\end{proof}
\subsection{Monge solutions for graphs glued on cliques}
We now turn to a natural extension of Theorem \ref{Theorem 1}.  The next proposition states, roughly speaking, that gluing together several graphs with inner hubs via the procedure formulated in Definition \ref{Def 2}, leads to a solution of Monge type.

\begin{proposition}\label{Proposition 2}
Let $S_1$ be a graph with inner hub $A_1$ and  $\{ S_{1 j}\}_{j=1}^{l}$  its collection of maximal cliques.  Let $E\subset \{2, \ldots, l\}$ such that for every $\alpha\in E$, $S_\alpha$ is a graph with inner hub $A_\alpha \neq \emptyset$, and with collection of maximal cliques  $\{ S_{\alpha j}\}_{j=1}^{k_{\alpha}}$. Assume $A_{\alpha}\cap A_{1}= \emptyset$ for every $\alpha \in E$, and set  $G=\bigcup_{\alpha\in E\cup\{1\}}S_{\alpha}$ and  $m=|V(G)|$. Let $\mu_{i}$ be probability measures over $X_{i}$, $i=1,\ldots, m$ and assume:
\begin{enumerate}
\item $S_{\alpha}$ and $S_{1}$ are glued on a clique for all $\alpha \in E$.
\item $V(S_{\alpha})\bigcap V(S_{\beta})=A_{1}$ for all $\alpha\neq \beta$,  $\alpha,\beta\in E$.
\item For each $\alpha\in E\cup \{1\}$, there exists $p_{\alpha}\in I(A_{\alpha})$ such that $\mu_{p_{\alpha}}$ is absolutely continuous  with respect to $\mathcal{L}^{n}$.
\item $\mu_{1}$ is absolutely continuous with respect to  $\mathcal{L}^{n}$ and $v_{1}\in V( S_{11})$.
\end{enumerate}
Then every solution to the Kantorovich problem \eqref{KP} with surplus associated to $G$ is concentrated on a graph of a measurable map.
\end{proposition}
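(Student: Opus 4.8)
The plan is to extend the proof of Theorem~\ref{Theorem 1}, exploiting hypotheses~(1)--(2): the branches $\{S_\alpha\}_{\alpha\in E}$ are glued to the central graph $S_1$ along maximal cliques and meet one another only in $A_1$, so that once the ``shared'' coordinates have been matched, the branches decouple and can be treated one at a time. As in Theorems~\ref{Theorem 2} and~\ref{Theorem 1}, I would let $\gamma$ be a Kantorovich solution and $(u_1,\dots,u_m)$ a $b$-conjugate dual solution, put
\[
\widetilde W=\Big\{(x_1,\dots,x_m):\ Du_1(x_1)\text{ and }Du_{p_\alpha}(x_{p_\alpha})\text{ exist for all }\alpha\in E\cup\{1\},\ \sum_{i=1}^{m} u_i(x_i)=b(x_1,\dots,x_m)\Big\},
\]
and note $\gamma(\widetilde W)=1$ since $\mu_1$ and every $\mu_{p_\alpha}$ is absolutely continuous. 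It then suffices, by the standard argument, to show that for a.e.\ fixed $x_1^0$ the map $(x_2,\dots,x_m)\mapsto D_{x_1}b(x_1^0,x_2,\dots,x_m)=\sum_{s\in I(N(v_1))}x_s$ is injective on $\{(x_2,\dots,x_m):(x_1^0,x_2,\dots,x_m)\in\widetilde W\}$; so I fix such an $x_1^0$, take $(x_1^0,x^1),(x_1^0,x^2)\in\widetilde W$ with $\sum_{s\in I(N(v_1))}x^1_s=\sum_{s\in I(N(v_1))}x^2_s$, and aim to prove $x^1_s=x^2_s$ for every $s$.

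The argument hinges on one elementary observation: since $1\notin E$, every vertex of $V(S_{11})$, and more generally every vertex of $A_1$, acquires \emph{no} neighbour in $G$ outside $V(S_1)$ (a vertex can only gain neighbours inside a branch $S_\alpha$ to which it belongs, which forces it into $V(S_{1\alpha})$, and hypothesis~(2) together with the inner-hub identities $V(S_{1j})\cap V(S_{1k})=A_1$ rules this out). Hence the neighbourhood relations used in Steps~1--2 of the proof of Theorem~\ref{Theorem 1} hold verbatim inside $G$, and that argument yields $x^1_s=x^2_s$ on $I(V(S_{11}))$ together with $\sum_{s\in\bigcup_{j\ge2}I(B_{1j})}x^1_s=\sum_{s\in\bigcup_{j\ge2}I(B_{1j})}x^2_s$, where $B_{1j}=V(S_{1j})\setminus A_1$; concretely, part~\ref{Part a} of Lemma~\ref{Lemma:1} pins $I(B_{11})$, this plus the hypothesis pins $\sum_{I(A_1)}$, and then part~\ref{Part 2} with summation over $I(A_1)$, followed by the differentiability of $u_{p_1}$, finishes. (If $v_1\in A_1$ rather than $v_1\in B_{11}$, one uses the ``$v_1\in A$'' case of that proof, reaching the same point.)

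Next I would peel off the branches. For each fixed $k\in\{2,\dots,l\}$, a swap as in Step~3 of Theorem~\ref{Theorem 1} — setting $y_s=x^2_s$ for $s\in I(V(S_k))$ (or $I(V(S_{1k}))$ if $k\notin E$) and $y_s=x^1_s$ otherwise — lies in $\widetilde W$, because these coordinates interact in $b$ only with vertices of $V(S_k)$ and the only part of $V(S_k)$ held fixed, namely $A_1$, is already pinned; comparing $x^1$ with $y$ then gives $\sum_{s\in I(B_{1k})}x^1_s=\sum_{s\in I(B_{1k})}x^2_s$, hence $\sum_{s\in I(V(S_{1k}))}x^1_s=\sum_{s\in I(V(S_{1k}))}x^2_s$. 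If $k\notin E$ then $V(S_{1k})$ is complete, so $\overline{N}_{G}(v_s)=V(S_{1k})$ for $s\in I(B_{1k})$ and part~\ref{Part 4} of Lemma~\ref{Lemma:1} finishes this block. If $k\in E$ then $V(S_{1k})$ is a maximal clique of $S_k$ and $A_k\subseteq B_{1k}$ (since $A_k\subseteq V(S_{1k})$ and $A_k\cap A_1=\emptyset$); the non-hub vertices of $B_{1k}$ still satisfy $\overline{N}_{G}(v_s)=V(S_{1k})$, so part~\ref{Part 4} pins them and forces $\sum_{s\in I(A_k)}x^1_s=\sum_{s\in I(A_k)}x^2_s$. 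From here one is exactly in the ``$v_1\in A$'' situation of Theorem~\ref{Theorem 1} applied to $S_k$: $A_k$ is its inner hub, $v_{p_k}$ (with $p_k\in I(A_k)$, $\mu_{p_k}$ absolutely continuous) plays the role of $v_1$, the boundary clique $V(S_{1k})$ is already controlled, and the remaining vertices of $S_k$ have all their $G$-neighbours inside $V(S_k)$. Running that proof — part~\ref{Part 2} and summation pin $A_k$, $Du_{p_k}$ gives the analogous sum identity over the remaining maximal cliques of $S_k$, and one more round of swaps plus part~\ref{Part 4} pins each of them — yields $x^1_s=x^2_s$ for all $s\in I(V(S_k))$. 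Taking the union over $k\in\{2,\dots,l\}$ completes the proof; the whole scheme may alternatively be organized as an induction on $|E|$, with base case $|E|=0$ being Theorem~\ref{Theorem 1} and the inductive step handling a single branch in the manner just described.

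The main difficulty is not any single computation but the bookkeeping at the gluing interfaces: one must check carefully, from hypotheses~(1)--(2) and the inner-hub identities, that (i) a vertex of $V(S_1)$ can only gain neighbours inside the ``new'' part $V(S_\alpha)\setminus V(S_{1\alpha})$ of a single branch, and (ii) those new vertices have all their $G$-neighbours inside $V(S_\alpha)$. These two facts are precisely what make every swap legitimate and the branch subproblems independent, so that the proposition reduces to at most $|E|+1$ applications of (pieces of) the argument of Theorem~\ref{Theorem 1}.
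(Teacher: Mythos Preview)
Your proposal is correct and follows essentially the same approach as the paper: set up $\widetilde W$, run Steps~1--2 of Theorem~\ref{Theorem 1} on $S_1$ (using that vertices of $A_1$ and of $B_{11}$ gain no $G$-neighbours outside $V(S_1)$), then for each $\alpha\in\{2,\dots,l\}$ perform a swap over the complementary block to isolate $\sum_{I(B_{1\alpha})}$, treat $\alpha\notin E$ directly by part~\ref{Part 4} of Lemma~\ref{Lemma:1}, and for $\alpha\in E$ re-run the inner-hub argument inside $S_\alpha$ using $p_\alpha$. The paper organizes the last step as an explicit second swap (with sets $\mathcal I_1',\mathcal I_2'$) rather than invoking the ``$v_1\in A$'' case of Theorem~\ref{Theorem 1} by name, but the computations are the same; your bookkeeping observations (i)--(ii) are exactly equation~\eqref{eqn:1.2-38} and the disjointness facts the paper uses.
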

\begin{proof}
The strategy of the proof is similar to the strategy used in Theorem \ref{Theorem 1}. Let $\gamma$ be a solution to the Kantorovich problem with surplus $b(x_{1}, \ldots,x_{m})$,  where $b$ is the surplus associated to $G$. Let $(u_{1}, \ldots, u_{m})$ be a $b$-conjugate solution to its dual and set
\begin{align*}
\widetilde{W}&=\Big\{ (x_{1},\ldots, x_{m}): Du_{1}(x_{1})\quad \text{and} \quad  Du_{p_{\alpha}}(x_{p_{\alpha}}) \quad\text{exist}\quad\text{for all $\alpha\in E \cup\{1\}$},\\
&\qquad \qquad \qquad\qquad\text{and} \quad\sum_{i=1}^{m}u_{i}(x_{i})=b(x_{1},\ldots, x_{m})\Big\}.
\end{align*}
Fix $x_{1}^{0}\in spt(\mu_{1})$, where $u_{1}(x_{1})$ is differentiable. Then $Du_{1}(x_{1}^{0})= D_{x_{1}}b(x_{1}^{0},\ldots, x_{m}^{0})$, for every $(x_{1}^{0},\ldots, x_{m}^{0})\in \widetilde{W}$.
We want to prove that the map $(x_{2}, \ldots, x_{m})\mapsto D_{x_{1}}b(x_{1}^{0},x_{2}, \ldots,x_{m})$ is injective on $$\widetilde{W}_{x_{1}^{0}}:=\left\lbrace( x_{2}, \ldots, x_{m}): (x_{1}^{0}, x_{2}, \ldots, x_{m})\in \widetilde{W}\right\rbrace.$$ Assume
\begin{equation}\label{eqn:1.2-48}
D_{x_{1}}b(x_{1}^{0},x_{2}^{1},\ldots, x_{m}^{1})=\sum_{s\in I(N(v_1))}x_{s}^{1}=\sum_{s\in I(N(v_1))}x_{s}^{2}=D_{x_{1}}b(x_{1}^{0},x_{2}^{2},\ldots, x_{m}^{2}),
\end{equation} 
with $(x_{1}^{0},x_{2}^{1},\ldots, x_{m}^{1}),(x_{1}^{0},x_{2}^{2},\ldots, x_{m}^{2})\in \widetilde{W}$ and $x_{1}^{0}:= x_{1}^{1}=x_{1}^{2}$. Note that if   $E=\emptyset$, we get $G=S_{1}$, and then, by Theorem \ref{Theorem 1} we get a solution of Monge type.
Assume $E\neq \emptyset$ and set $B_{j}= V(S_{1j})\setminus A_{1}$, where $j \in \{1,\ldots, l\}$. Since $A_{j}\cap A_{1}= \emptyset$ for every $j \in E$, $B_{j}\neq \emptyset$ for every $j \in E$ and 
\begin{equation}\label{eqn:1.2-38}
N(v_s)=V(S_{1})\setminus \{v_{s}\}=\bigcup_{j=1}^{l} B_{j}\cup \left(A_{1}\setminus \{v_{s}\}\right), \quad \text{for every}\quad s\in I(A_{1}).
\end{equation} 
Furthermore, by assumption 1 we can assume without lost of generality that $S_{1\alpha}=S_{\alpha 1}$ for every $\alpha \in E$. As in Theorem \ref{Theorem 1}, we consider two cases, $v_{1}\in B_{1}$ or $v_{1}\in A_{1}$. Let us divide the proof of case $v_{1}\in B_{1}$ into several steps:
\begin{enumerate}[label=\textbf{Step \arabic*.}]
\item We proceed to make a straightforward adaptation of the arguments used in Step 3 of the proof of Theorem \ref{Theorem 1}. First, note that   $N(v_s)=V(S_{11})\setminus\{v_{s}\}$, for every $s\in I(B_{1})$, then, using the differentiability of $u_{p_{1}}(x_{p_{1}})$ at $x_{p_{1}}^{1}$ and $x_{p_{1}}^{2}$,  and the equalities \eqref{eqn:1.2-48} and \eqref{eqn:1.2-38}, we can mirror steps 1 and 2 in the proof of Theorem \ref{Theorem 1} to get:
\begin{equation}\label{eqn:1.2-40}
x_{s}^{1}=x_{s}^{2}\quad \text{for all}\quad s\in I(B_{1}),
\end{equation}
\begin{equation}\label{eqn:1.2-16}
x_{s}^{1}=x_{s}^{2}\quad \text{for all}\quad s\in I(A_{1}),
\end{equation}
and
\begin{equation}\label{eqn:1.2-17}
\sum_{{s\in \bigcup_{j=1}^{l} I(B_{j})}}x_{s}^{1}=\sum_{{s\in \bigcup_{j=1}^{l}I(B_{j})}}x_{s}^{2}.
\end{equation}
\item 
Fix $\alpha\in \{2, \ldots, l\}$ and set $S_{\beta}=S_{1\beta}$ for any $\beta\in \{ 2, \ldots, l\}\setminus E$. Define $\mathcal{I}_{1}=\bigcup_{\underset{\beta\neq \alpha}{\beta=2}}^{l} I(V(S_{\beta})\setminus A_{1})\cup I(B_{1})=\{t_{1}, \ldots, t_{d}\}$ and $\mathcal{I}_{2}=I(V(S_{\alpha})\setminus A_{1})=\{r_{1}, \ldots, r_{e}\}$.
We claim that $x_{s}^{1}=x_{s}^{2}$ for all $s\in \mathcal{I}_{2}$, this will complete the proof. Indeed, note that
\begin{flalign}\label{eqn:1.2-37}
\{1, \ldots, m\}& =\bigcup_{\beta=2}^{l} I(V(S_{\beta}))\cup I(B_{1}) \nonumber\\
  &= \left(\bigcup_{\underset{\beta\neq \alpha}{\beta=2}}^{l} I(V(S_{\beta}))\cup I(B_{1})\right)\cup I(V(S_{\alpha})) \nonumber\\
   &=\left(\bigcup_{\underset{\beta\neq \alpha}{\beta=2}}^{l} I(V(S_{\beta})\setminus A_{1})\cup I(B_{1})\right)\cup I(V(S_{\alpha})\setminus A_{1})\cup A_{1}\nonumber\\
    &= \mathcal{I}_{1} \cup \mathcal{I}_{2}\cup A_{1}
\end{flalign}
Furthermore, the last union is disjoint by assumptions 1 and 2. Now, let $g_{1}(x_{t_{1}},\ldots,x_{t_{d}})$ and $g_{2}(x_{r_{1}},\ldots,x_{r_{e}})$  be the functions formed by all the terms of $b$ that depend only on the  variables with index in $\mathcal{I}_{1}$ and  $\mathcal{I}_{2}$ respectively. From Definition 2.3 and assumptions 1 and 2,  it is not hard to deduces that  
\begin{equation*}
\bigcup_{s\in \mathcal{I}_{k}}N(v_s)= \{v_{s}\}_{s\in \mathcal{I}_{k}}\cup A_{1}, k=1,2.
\end{equation*}
Combining the above equalities, \eqref{eqn:1.2-37} and \eqref{eqn:1.2-38} we get
\begin{align*}
b(x_{1},\ldots, x_{m})&=g_{1}(x_{t_{1}},\ldots,x_{t_{d}})+ g_{2}(x_{r_{1}},\ldots,x_{r_{e}})\\ &
 + \Big(\sum_{{s\in I(A_{1})}}x_{s}\Big )\cdot  \sum_{s\in  \bigcup_{j=1}^{l} I(B_{j})}x_{s} + \sum_{\underset{s<t }{s,t\in  I(A_{1})}}x_{s}\cdot x_{t}\\
 &= g_{1}(x_{t_{1}},\ldots,x_{t_{d}})+ g_{2}(x_{r_{1}},\ldots,x_{r_{e}}) \\ &
 + \Big(\sum_{{s\in I(A_{1})}}x_{s}\Big )\cdot   \sum_{s\in  \bigcup_{\underset{j\neq \alpha}{j=1}}^{l} I(B_{j})}x_{s} +  \Big(\sum_{{s\in I(A_{1})}}x_{s}\Big )\cdot\sum_{s\in   I(B_{\alpha})}x_{s} + \sum_{\underset{s<t }{s,t\in  I(A_{1})}}x_{s}\cdot x_{t}.
\end{align*}
Note that
\begin{equation}\label{eqn:1.2-39}
 \bigcup_{\underset{j\neq \alpha}{j=1}}^{l} I(B_{j})\subset \mathcal{I}_{1}, 
\end{equation}
and the only terms of $b$ that depend on the variables with index in $\mathcal{I}_{1}$ are $g_{1}(x_{t_{1}},\ldots,x_{t_{d}})$ and $\Big(\sum_{{s\in I(A_{1})}}x_{s}\Big )\cdot   \sum_{s\in  \bigcup_{\underset{j\neq \alpha}{j=1}}^{l} I(B_{j})}x_{s}$. Hence,
\begin{align*}
\left\lbrace x_{s}^{1}\right\rbrace_{s\in \mathcal{I}_{1}}&\in\text{Argmax}\Bigg\{\left\lbrace x_{s}\right\rbrace_{s\in \mathcal{I}_{1}} \mapsto \Big (\sum_{{s\in I(A_{1})}}x_{s}^{1}\Big )\cdot \sum_{s\in  \bigcup_{\underset{j\neq \alpha}{j=1}}^{l} I(B_{j})}x_{s} + g_{1}(x_{t_{1}},\ldots,x_{t_{d}})\\ &
\qquad\qquad\qquad  -\sum_{s\in\mathcal{I}_{1}}u_{s}(x_{s}) + g_{2}(x_{r_{1}}^{1},\ldots,x_{r_{e}}^{1})+  \Big(\sum_{{s\in I(A_{1})}}x_{s}^{1}\Big )\cdot\sum_{s\in   I(B_{\alpha})}x_{s}^{1}\\
& \qquad\qquad\qquad + \sum_{\underset{s<t }{s,t\in  I(A_{1})}}x_{s}^{1}\cdot x_{t}^{1}-\sum_{s\in\mathcal{I}_{2}\cup I(A_{1})}u_{s}(x_{s}^{1})\Bigg\}\\
&=\text{Argmax}\Bigg\{\left\lbrace x_{s}\right\rbrace_{s\in \mathcal{I}_{1}} \mapsto \Big (\sum_{{s\in I(A_{1})}}x_{s}^{1}\Big )\cdot \sum_{s\in  \bigcup_{\underset{j\neq \alpha}{j=1}}^{l} I(B_{j})}x_{s} + g_{1}(x_{t_{1}},\ldots,x_{t_{d}})\\&
\qquad\qquad\qquad  -\sum_{s\in\mathcal{I}_{1}}u_{s}(x_{s})\Bigg\},\\
\end{align*}
and by \eqref{eqn:1.2-16},
\begin{align*}
\left\lbrace x_{s}^{1}\right\rbrace_{s\in \mathcal{I}_{1}}&\in\text{Argmax}\Bigg\{\left\lbrace x_{s}\right\rbrace_{s\in \mathcal{I}_{1}} \mapsto \Big (\sum_{{s\in I(A_{1})}}x_{s}^{2}\Big )\cdot \sum_{s\in  \bigcup_{\underset{j\neq \alpha}{j=1}}^{l} I(B_{j})}x_{s} +  g_{1}(x_{t_{1}},\ldots,x_{t_{d}}) \\&
\qquad\qquad\qquad  -\sum_{s\in\mathcal{I}_{1}}u_{s}(x_{s})\Bigg\}.
\end{align*}
Since  $(x_{1}^{0},x_{2}^{2},\ldots, x_{m}^{2})\in \widetilde{W}$, we obtain  $y:=(y_{1}, y_{2}, \ldots, y_{m})\in\widetilde{W}$, where
\[y_{s}= \begin{cases} 
      x_{s}^{2} & \text{if}\quad s\in \{1, 2, \ldots, m\}\setminus \mathcal{I}_{1}\\
      x_{s}^{1} & \text{if}\quad  s\in\mathcal{I}_{1}\\
      
   \end{cases}
\]
 Therefore, \eqref{eqn:1.2-17} holds true for $y$ and $(x_{1}^{0},x_{2}^{1},\ldots, x_{m}^{1})\in \widetilde{W}$; that is,
 \begin{equation*}
 \sum_{{s\in \bigcup_{j=1}^{l} I(B_{j})}}y_{s}=\sum_{{s\in \bigcup_{j=1}^{l}I(B_{j})}}x_{s}^{1},
\end{equation*}
or equivalently,
\begin{equation*}
\sum_{{s\in I(B_{\alpha})}}y_{s} +  \sum_{s\in  \bigcup_{\underset{j\neq \alpha}{j=1}}^{l} I(B_{j})}y_{s} = \sum_{{s\in I(B_{\alpha})}}x_{s}^{1} + \sum_{s\in  \bigcup_{\underset{j\neq \alpha}{j=1}}^{l} I(B_{j})}x_{s}^{1}.
\end{equation*}
 By the above equality, \eqref{eqn:1.2-39} and construction of $y$ we get
\begin{equation}\label{eqn:1.2-30}
 \sum_{{s\in I(B_{\alpha})}}x_{s}^{2}=\sum_{{s\in  I(B_{\alpha})}}x_{s}^{1}.
\end{equation}
\item Since $B_{\alpha }= V(S_{1\alpha })\setminus A_{1}$, by \eqref{eqn:1.2-16} and the above equality we can write,
\begin{equation}\label{eqn:1.2-20}
\sum_{{s\in I(V(S_{1\alpha }))}}x_{s}^{1}=\sum_{{s\in  I(V(S_{1\alpha }))}}x_{s}^{2}.
\end{equation}

Now, if $\alpha \in \{2, \ldots, m\}\setminus E$, then  $S_{\alpha}=S_{1\alpha}$ and $N(v_s)=V(S_{1\alpha })\setminus \{v_{s}\}$ for any $s\in I(B_{\alpha })$. Hence, from \eqref{eqn:1.2-20} we get \eqref{eqn:1.2-1} on $I(B_{\alpha })$, implying $x_{s}^{1}=x_{s}^{2}$ on $I(V(S_{\alpha}))$, by part \ref{Part 4} of Lemma \ref{Lemma:1} and \eqref{eqn:1.2-16}. \par On the other hand, if $\alpha \in E$, the equality $A_{\alpha} \cap A_{1}=\emptyset$ implies $A_{\alpha} \subseteq V(S_{\alpha 1 })\setminus A_{1}=V(S_{1\alpha })\setminus A_{1}=B_{\alpha }$. It follows by \eqref{eqn:1.2-20} that equality \eqref{eqn:1.2-1} holds  for the elements of $I(B_{\alpha }\setminus A_{\alpha})$, as every $s\in I(B_{\alpha }\setminus A_{\alpha})$ satisfies  $N(v_s)=V(S_{1\alpha })\setminus \{v_{s}\}$. Then, by part \ref{Part 4} of Lemma \ref{Lemma:1},
\begin{equation}\label{eqn:1.2-58}
x_{s}^{1}=x_{s}^{2}\quad \text{for all}\quad s\in I(B_{\alpha }\setminus A_{\alpha}), 
\end{equation} 
and by \eqref{eqn:1.2-30},
 $$\sum_{{s\in I(A_{\alpha })}}x_{s}^{1}=\sum_{{s\in  I(A_{\alpha })}}x_{s}^{2}.$$
Note that by the differentiability of $u_{p_{\alpha}}(x_{p_{\alpha}})$ at $x_{p_{\alpha}}^{1}$ and $x_{p_{\alpha}}^{2}$, we can apply to the graph $S_{\alpha}=\bigcup_{j=1}^{k_{\alpha}} S_{\alpha j}$, the same arguments discussed in Step 2 of the proof of Theorem \ref{Theorem 1}, getting 
\begin{equation}\label{eqn:1.2-56}
 x_{s}^{1}=x_{s}^{2}\quad \text{for all}\quad s\in I(A_{\alpha}), \quad \text{and}\quad \sum_{{s\in \bigcup_{j=1}^{k_{\alpha}} I(B_{\alpha j})}}x_{s}^{1}=\sum_{{s\in \bigcup_{j=1}^{k_{\alpha}}I(B_{\alpha j})}}x_{s}^{2},
\end{equation}
where $B_{\alpha j}=V(S_{\alpha j })\setminus A_{\alpha}$ for all $j\in \{1, \ldots, k_{\alpha}\}$. By the left-hand equality of the above results, \eqref{eqn:1.2-16} and \eqref{eqn:1.2-58}, we get 
\begin{equation}\label{eqn:1.2-59}
 x_{s}^{1}=x_{s}^{2}\;\;\text{on}\;\; I(V(S_{1\alpha }))=I(V(S_{\alpha 1})).
\end{equation}
 Next, we fix $r\in \{2, \ldots, k_{\alpha}\}$ and proceed to apply the same strategy used in step 2: we set $\mathcal{I}_{1}^{\prime}=\{1, \ldots, m\}\setminus I(V(S_{\alpha r }) )=\{e_{1}, \ldots, e_{q}\}$ and $\mathcal{I}_{2}^{\prime}=I(B_{\alpha r})=\{d_{1}, \ldots, d_{f}\}$, and consider $g_{1}^{\prime}(x_{e_{1}},\ldots,x_{e_{q}})$ and $g_{2}^{\prime}(x_{d_{1}},\ldots,x_{d_{f}})$, the functions formed by all the terms of $b$ that depend only on the vertices with index in $\mathcal{I}_{1}^{\prime}$ and  $\mathcal{I}_{2}^{\prime}$ respectively. 
Noting that $\bigcup_{s\in \mathcal{I}_{j}^{\prime}}N(v_s)= \{v_{s}\}_{s\in \mathcal{I}_{j}^{\prime}}\cup A_{\alpha}, j=1,2$, and using the left-hand equality  in \eqref{eqn:1.2-56}, we follow the arguments of Step 2 to get
\begin{align*}
\left\lbrace x_{s}^{1}\right\rbrace_{s\in \mathcal{I}_{1}^{\prime}}&\in\text{Argmax}\Bigg\{\left\lbrace x_{s}\right\rbrace_{s\in \mathcal{I}_{1}^{\prime}} \mapsto \Big (\sum_{{s\in I(A_{\alpha})}}x_{s}^{1}\Big )\cdot \sum_{s\in  \bigcup_{\underset{j\neq r}{j=1}}^{k_{\alpha}} I(B_{\alpha j})}x_{s} +  g_{1}^{\prime}(x_{e_{1}}, \ldots, x_{e_{q}})\\ &
\qquad\qquad\qquad  -\sum_{s\in\mathcal{I}_{1}^{\prime}}u_{s}(x_{s}) + g_{2}^{\prime}(x_{d_{1}}^{1},\ldots,x_{d_{f}}^{1})+  \Big(\sum_{{s\in I(A_{\alpha})}}x_{s}^{1}\Big )\cdot\sum_{s\in   I(B_{\alpha r})}x_{s}^{1}\\
& \qquad\qquad\qquad + \sum_{\underset{s<t }{s,t\in  I(A_{\alpha})}}x_{s}^{1}\cdot x_{t}^{1}-\sum_{s\in\mathcal{I}_{2}^{\prime}\cup I(A_{\alpha})}u_{s}(x_{s}^{1})\Bigg\}\\
&=\text{Argmax}\Bigg\{\left\lbrace x_{s}\right\rbrace_{s\in \mathcal{I}_{1}^{\prime}} \mapsto \Big (\sum_{{s\in I(A_{\alpha})}}x_{s}^{1}\Big )\cdot \sum_{s\in  \bigcup_{\underset{j\neq r}{j=1}}^{k_{\alpha}} I(B_{\alpha j})}x_{s} +  g_{1}^{\prime}(x_{e_{1}}, \ldots, x_{e_{q}})\\ &
\qquad\qquad\qquad  -\sum_{s\in\mathcal{I}_{1}^{\prime}}u_{s}(x_{s}) \Bigg\}\\
&=\text{Argmax}\Bigg\{\left\lbrace x_{s}\right\rbrace_{s\in \mathcal{I}_{1}^{\prime}} \mapsto \Big (\sum_{{s\in I(A_{\alpha})}}x_{s}^{2}\Big )\cdot \sum_{s\in  \bigcup_{\underset{j\neq r}{j=1}}^{k_{\alpha}} I(B_{\alpha j})}x_{s} +  g_{1}^{\prime}(x_{e_{1}}, \ldots, x_{e_{q}})\\ &
\qquad\qquad\qquad  -\sum_{s\in\mathcal{I}_{1}^{\prime}}u_{s}(x_{s}) \Bigg\},
\end{align*}
and then, using the right-hand equality in   \eqref{eqn:1.2-56}, we get the equality \eqref{eqn:1.2-30} on $I(B_{\alpha r})$; that is, 
\begin{equation}\label{eqn:1.2-57}
 \sum_{{s\in I(B_{\alpha r})}}x_{s}^{2}=\sum_{{s\in  I(B_{\alpha r})}}x_{s}^{1}.
\end{equation}
Finally, we combine the above equality with the left-hand equality in \eqref{eqn:1.2-56} to get \eqref{eqn:1.2-1} for all $s\in I(B_{\alpha r})$, since $N(v_s)=V(S_{\alpha r })\setminus \{v_{s}\}$ for every $s\in I(B_{\alpha r})$ and $B_{\alpha j}=V(S_{\alpha j })\setminus A_{\alpha}$. Hence, by part \ref{Part 4} of Lemma \ref{Lemma:1}, $x_{s}^{1}=x_{s}^{2}$ for all $s\in I(B_{\alpha r})$. Thus, $x_{s}^{1}=x_{s}^{2}$ for all $s\in \bigcup_{j=2}^{k_{\alpha}}I(B_{\alpha j})=I(V(S_{\alpha})\setminus V(S_{\alpha 1})) $. Hence, from \eqref{eqn:1.2-59} we conclude  $x_{s}^{1}=x_{s}^{2}$ on  $I(V(S_{\alpha}))$, and so, $x_{s}^{1}=x_{s}^{2}$ for all $s\in \bigcup_{\alpha =1}^{l} I(V(S_{\alpha}))= \{1, \ldots, m\}$, completing the proof of the case $v_{1}\in B_{1}$.
\end{enumerate}

 For the case $v_{1}\in A_{1}$, every $s\in I(A_{1})$ satisfies $N(v_s)= V(S_{1})\setminus \{v_{s}\}$, then any $s\in I(A_{1})$ satisfies $\overline{N}(v_s)=\overline{N}(v_1)$. Therefore, by part \ref{Part a} of Lemma \ref{Lemma:1} we get \eqref{eqn:1.2-16}, and \eqref{eqn:1.2-48} reduces to \eqref{eqn:1.2-17}. For the rest of the proof we fix $\alpha \in\{1, \ldots, l\}$ and mimic the proof of the case $v_{1}\in B_{1}$, completing the proof of the theorem.
\end{proof}
\begin{remark} Th results developed in this section, for  graphs with inner hubs glued on their cliques, are neither more or less general than Theorem 3.1, which applies to graphs obtained by extracting subgraphs with inner hubs from complete graphs. To see this, note that in Theorem \ref{Theorem 2}, if $m=4$ and $l=2$, with $S_{1}=\{x_{1},x_{3}\}$ and $S_{2}=\{x_{2},x_{4}\}$, we get the surplus associated to the graph in Figure \ref{Graph cycle.}, which clearly cannot be obtained from the results of Section 4. On the other hand, we can find examples of surplus functions covered by the framework presented in Section 4, but not covered by Theorem \ref{Theorem 2}. For instance, Figure \ref{Path} and \ref{Tree} are graphs whose respective surplus are not covered by Theorem \ref{Theorem 2}, as we need more than two absolutely continuous measures and clearly, these conditions are necessary.
\end{remark}

 We next turn to a slight generalization of Proposition \ref{Proposition 2}, where, roughly speaking, any two graphs (with inner hubs) can be glued together (unlike in the preceding proposition, where each $S_\alpha$, $\alpha \in E$ was glued to $S_1$).  The proof is a straightforward modification of the proof of Proposition \ref{Proposition 2} and is therefore omitted.
 \par
In order to facilitate the description of the next Proposition we will introduce a natural higher level notion of graph. For this, we interpret any collection of graphs with inner hubs $\{G_\alpha\}_{\alpha=1}^{l}$, as the vertices of a graph $\mathcal{G}$, whose edges are glueings on cliques  between the $G_{\alpha}$ and $G_{\beta}$; that is,
 $$V(\mathcal{G})=\{G_\alpha\}_{\alpha=1}^{l}$$
 and
 $$E(\mathcal{G})=\left\lbrace \{ G_{\alpha}, G_{\beta}\}: G_{\alpha}\;\; \text{is glued on a clique to}\;\;G_{\beta} \right\rbrace.$$
 \begin{proposition}\label{Proposition 3}
Let $\{G_\alpha\}_{\alpha=1}^{l}$ be a collection of graphs with inner hubs $A_{\alpha}$, and $\mathcal{G}$ its associated higher order graph (described above).  Let $m=|\bigcup_{\alpha=1}^{l}V(G_{\alpha})|$ and $\mu_{i}$ be probability measures over $X_{i}$, $i=1,\ldots, m$, where without loss of generality $v_{1}\in V(G_{1})$. Assume:
\begin{enumerate}
\item For each distinct $\alpha \neq \beta$, $A_{\alpha}\cap A_{\beta}=\emptyset$ and $V(G_{\alpha})\cap V(G_{\beta})$ is either:
\begin{itemize}
\item empty,
\item the vertex set $V(S)$, where $S$ is a maximal clique $S$ of both $G_{\alpha}$ and $G_{\beta}$ (in this case $G_{\alpha}$ and $G_{\beta}$ are glued on a clique $S$), or
\item $A_\lambda$ for some other $G_\lambda$ (as when $G_\alpha$ and $G_\beta$ are both glued to $G_\lambda$).
\end{itemize}
\item  $\mu_1$ is  absolutely continuous  with respect to $\mathcal{L}^{n}$, and, for each $\alpha \in \{1, \ldots, l\}$, there exists $p_{\alpha}\in I(A_{\alpha})$ such that $\mu_{p_{\alpha}}$ is absolutely continuous  with respect to $\mathcal{L}^{n}$.
\item For at least one maximal clique $S$ having $v_1$ as one of its vertices, $G_1$ is not glued to any other $G_{\alpha}$ on $S$.
\item  $\mathcal{G}$ is a tree.
\end{enumerate}
Then every solution to the Kantorovich problem with surplus $\bigcup_{\alpha=1}^{l}G_{\alpha}$ is induced by a map.
\end{proposition}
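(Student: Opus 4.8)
The plan is to imitate the proof of Proposition \ref{Proposition 2}, replacing its single gluing step by an induction that sweeps along the tree $\mathcal{G}$ from the root $G_1$ outward. As in the earlier proofs, fix a solution $\gamma$ of \eqref{KP} with the surplus $b$ associated to $G=\bigcup_{\alpha=1}^{l}G_\alpha$, a $b$-conjugate dual solution $(u_1,\ldots,u_m)$, and set
\[
\widetilde{W}=\Big\{(x_1,\ldots,x_m): Du_1(x_1)\ \text{and each}\ Du_{p_\alpha}(x_{p_\alpha})\ \text{exist, and}\ \textstyle\sum_{i}u_i(x_i)=b(x_1,\ldots,x_m)\Big\}.
\]
Hypothesis 2 gives $\gamma(\widetilde{W})=1$, so, exactly as in Theorems \ref{Theorem 2} and \ref{Theorem 1}, it suffices to prove that the map $(x_2,\ldots,x_m)\mapsto D_{x_1}b(x_1^0,x_2,\ldots,x_m)=\sum_{s\in I(N(v_1))}x_s$ is injective on the slice $\widetilde{W}_{x_1^0}$ for $\mu_1$-a.e.\ $x_1^0$. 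I would therefore take two points of $\widetilde W$ above $x_1^0$ sharing this value and aim to show that all remaining coordinates coincide.

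First I would treat the root $G_1$ exactly as in Theorem \ref{Theorem 1}. By Hypothesis 3 there is a maximal clique $S$ of $G_1$ with $v_1\in V(S)$ on which $G_1$ is glued to no other $G_\alpha$; this plays the role of ``$S_{11}$'' in Proposition \ref{Proposition 2}. Splitting into the cases $v_1\in V(S)\setminus A_1$ and $v_1\in A_1$ and running Steps 1--2 of Theorem \ref{Theorem 1} (via parts \ref{Part 1}, \ref{Part a}, \ref{Part 2} and \ref{Part 4} of Lemma \ref{Lemma:1} and the differentiability of $u_{p_1}$), one obtains $x_s^1=x_s^2$ on $I(V(S))$ and on $I(A_1)$, together with the sum-equality $\sum_{s\in I(V(G_1)\setminus A_1)}x_s^1=\sum_{s\in I(V(G_1)\setminus A_1)}x_s^2$.

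Then I would descend the tree, processing the nodes of $\mathcal{G}$ in order of increasing distance from $G_1$. Suppose $G_\beta$ is glued to an already-processed neighbour on a maximal clique $S'$, so that $V(G_\beta)\cap V(\text{that neighbour})=V(S')$ and, inductively, $x_s^1=x_s^2$ on $I(V(S'))$ (hence on $I(A_\beta)\subseteq I(V(S'))$), with an appropriate sum-equality over the branch containing $G_\beta$ available. Because condition 1 forces every pairwise overlap of the $G_\gamma$'s to be empty, a gluing clique, or some third graph's inner hub, the only coupling in $b$ between the variables indexed in $I(V(G_\beta)\setminus V(S'))$, together with those of the descendants of $G_\beta$, and the remaining variables runs through $I(V(S'))$, whose coordinates already agree. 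Hence the variable-swapping Argmax argument of Step 2 of Proposition \ref{Proposition 2} produces a competitor $y\in\widetilde W$ and yields a sum-equality over $I(V(S'))$; then Steps 1--3 of Theorem \ref{Theorem 1}, applied internally to $G_\beta=\bigcup_j S_{\beta j}$ and using the differentiability of $u_{p_\beta}$, give $x_s^1=x_s^2$ on all of $I(V(G_\beta))$ and the sum-equality feeding the children of $G_\beta$. Since $\mathcal{G}$ is a finite tree, after finitely many steps every vertex is processed, so $x_s^1=x_s^2$ on $I\big(\bigcup_\alpha V(G_\alpha)\big)=\{1,\ldots,m\}$; the passage from ``Monge'' to ``unique Monge'' is then the standard argument of the final section.

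The main obstacle is organizational rather than conceptual: one must arrange the induction so that the data delivered by processing the parent of $G_\beta$ — equality of the shared-clique coordinates, plus a sum-equality of exactly the right shape — is precisely what the Argmax lemma and the Theorem \ref{Theorem 1} machinery require at $G_\beta$, and one must check, using the disjointness in condition 1 and the absence of cycles in $\mathcal{G}$, that the terms of $b$ split cleanly at every stage, so that resolving one branch never disturbs equalities already secured on ancestors or on sibling branches. This is exactly where the tree hypothesis is used: it rules out the ``wrapping around'' responsible for the failure of Monge solutions for cyclic surpluses (cf.\ Proposition \ref{prop: non ToSS} and \cite{Pass5}), and so Hypothesis 4 cannot be dropped.
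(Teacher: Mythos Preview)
Your proposal is correct and matches the paper's approach: the paper omits the proof entirely, stating only that it ``is a straightforward modification of the proof of Proposition \ref{Proposition 2},'' and your plan of running the Proposition \ref{Proposition 2} argument inductively along the tree $\mathcal{G}$ from the root $G_1$ outward is precisely that modification.
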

\begin{remark}
Using the terminology developed  above, the assumptions in Proposition \ref{Proposition 2} are equivalent to the assumptions in Proposition \ref{Proposition 3}, except that the hypothesis that $\mathcal{G}$ is a tree is replaced with the hypothesis that $\mathcal{G}$ is a star with internal node $S_1$.  Therefore, Proposition \ref{Proposition 3} is a direct generalization of Proposition \ref{Proposition 2}.
\end{remark}

\subsection{Examples}
Let us illustrate the results obtained in Section 4 throughout some examples.
In what follows, $\mu_{1}$ is absolutely continuous.
\begin{examples}
\begin{enumerate}[label=(\roman*)]
\item In Theorem \ref{Theorem 1}, if $S_{j}=S_{k}$ for every $j,k\in \{1, \ldots, l\}$, then $\bigcup_{j=1}^{l} S_{j}$ reduces to the Gangbo and \'{S}wi\c{e}ch surplus.

\item By Theorem \ref{Theorem 1}, the graph $S_{1}\cup S_{2}\cup S_{3}$ in Example \ref{Example 1}  provides a Monge solution, with $\mu_{p}$ absolutely continuous for some $p\in \{6,7,8\}$. 
\item In Example \ref{Example 2}, if there are $p_{1}\in I(A)$ and $p_{2}\in I(A^{\prime})$ such that $\mu_{p_{1}}$ and $\mu_{p_{2}}$ are absolutely continuous, then by Proposition \ref{Proposition 2} the graph $\left(\bigcup_{j=1}^{3}S_{j}\right)\bigcup \left(\bigcup_{j=1}^{4}S_{j}^{\prime}\right)$ provides a solution of Monge type .
\item By Theorem \ref{Theorem 1}, any graph of the form $K_{1,k}$ ( known as a star graph) provides a solution of Monge type, under at most two regularity conditions (see pictures below). Note that $|V(K_{1,k})|=k+1$ and there exists $v\in V(K_{1,k})$ such that $N(v)=\{v_{1}, \ldots, v_{k}\}$. Additionally, $N(v_s)=\{v\}$ for all $s\in \{1, \ldots, k\}$. This is one of the most simple graphs  providing Monge solutions that we could obtain, since  a graph with inner hub have in fact a "star shape". Note that, in the general setting, the single set $\{v\}$ is replaced by the inner hub $A$ and $\{v_{j}\}$ is replaced by $B_{j}:=V(S_{j})\setminus A$, $j=1, \ldots, k$, where $\{S_{j}\}_{j=1}^{l}$ is the collection of maximal cliques. See for instance Figure \ref{fig:13}.
\begin{figure}[H]   
\begin{center}
\begin{subfigure}[b]{0.4\linewidth}
\begin{tikzpicture}[x=1.5cm, y=1.5cm]
	\vertex (v1) at (60:1) [label=60:$v_{1}$]{};
	\vertex (v2) at (120:1) [label=120:$v_{2}$]{};
	\vertex (v3) at (180:1) [label=180:$v_{3}$]{};
	\vertex (v4) at (240:1) [label=240:$v_{4}$]{};
	\vertex (v5) at (300:1) [label=300:$v_{5}$]{};
	\vertex (v6) at (360:1) [label=360:$v_{6}$]{};
	\vertex (v7) at (360:0) [label=5:$v_{7}$]{};
	\path 
		(v6) edge (v7)
		(v7) edge (v1)
		(v7) edge (v2)
		(v7) edge (v4)
		(v7) edge (v3)
		(v7) edge (v5)
		;
\end{tikzpicture}
\caption{$K_{1,6}$, with $V_{1}=\{v_{7}\}$ and $V_{2}=\{v_{i}\}_{i=1}^{6}$. 
Here, we need regularity conditions on $\mu_{1}$ and $\mu_{7}$.} 
  \end{subfigure}
  \qquad \qquad
\begin{subfigure}[b]{0.4\linewidth}
\begin{tikzpicture}[x=1.5cm, y=1.5cm]
	\vertex (v7) at (60:1) [label=60:$v_{7}$]{};
	\vertex (v2) at (120:1) [label=120:$v_{2}$]{};
	\vertex (v3) at (180:1) [label=180:$v_{3}$]{};
	\vertex (v4) at (240:1) [label=240:$v_{4}$]{};
	\vertex (v5) at (300:1) [label=300:$v_{5}$]{};
	\vertex (v6) at (360:1) [label=360:$v_{6}$]{};
	\vertex (v1) at (360:0) [label=5:$v_{1}$]{};
	\path 
		(v6) edge (v1)
		(v1) edge (v7)
		(v1) edge (v2)
		(v1) edge (v4)
		(v1) edge (v3)
		(v1) edge (v5)
		;
\end{tikzpicture}
\caption{$K_{1,6}$, with $V_{1}=\{v_{1}\}$ and $V_{2}=\{v_{i}\}_{i=2}^{7}$.  Here, we only need a regularity condition on $\mu_{1}$.} 
\end{subfigure}
\caption{}
\end{center}
\end{figure}
\begin{figure}[H]  
\begin{center}
\begin{tikzpicture}[x=1.5cm, y=1.5cm]
	\vertex (v1) at (95:2) [label=95:$v_{1}$]{};
	\vertex (v7) at (85:2) [label=85:$v_{7}$]{};
	\vertex (v2) at (180:0.5) [label=20:$v_{2}$]{};
	\vertex (v3) at (270:0.5) [label=270:$v_{3}$]{};
	\vertex (v4) at (360:0.5) [label=160:$v_{4}$]{};
	\vertex (v5) at (370:3) [label=370:$v_{5}$]{};
\vertex (v9) at (375:3) [label=375:$v_{9}$]{};
	\vertex (v6) at (170:3) [label=170:$v_{6}$]{};
	\vertex (v8) at (165:3) [label=165:$v_{8}$]{};
	\vertex (v10) at (330:3) [label=330:$v_{10}$]{};
	\vertex (v13) at (335:3) [label=335:$v_{13}$]{};
	\vertex (v11) at (210:3) [label=210:$v_{11}$]{};
	\vertex (v12) at (205:3) [label=205:$v_{12}$]{};
	\path 
		(v1) edge (v2)
		(v2) edge (v3)
		(v3) edge (v4)
		(v4) edge (v1)
		(v2) edge (v4)
		(v5) edge (v2)
		(v2) edge (v6)
		(v3) edge (v6)
		(v3) edge (v5)
		(v3) edge (v1)
		(v4) edge (v5)
		(v4) edge (v6)
		(v4) edge (v7)
		(v3) edge (v7)
		(v2) edge (v7)
		(v1) edge (v7)
	    (v2) edge (v8)
	    (v3) edge (v8)
	   	(v4) edge (v8)
	   	(v6) edge (v8)
	   	(v5) edge (v9)
	   	(v2) edge (v9)
	   	(v4) edge (v9)
	   	(v3) edge (v9)
	   	(v3) edge (v11)
	   	(v3) edge (v12)
	   	(v2) edge (v11)
	   	(v2) edge (v12)
	   	(v4) edge (v11)
	   	(v4) edge (v12)
	   	(v11) edge (v12)
	   	(v10) edge (v13)
	   	(v10) edge (v2)
	   	(v10) edge (v3)
	   	(v10) edge (v4)
	   	(v13) edge (v2)
	   	(v13) edge (v3)
	   	(v13) edge (v4)

		;	
\end{tikzpicture}
\caption{Graph $G=\bigcup_{j=1}^{5}S_{j}$ generated by the collection of its maximal cliques $\{S_{j}\}_{j=1}^{5}$, where $V(S_{1})=\{v_{2},v_{3},v_{4},v_{1},v_{7}\}$, $V(S_{2})=\{v_{2},v_{3},v_{4},v_{6},v_{8}\}$, $V(S_{3})=\{v_{2},v_{3},v_{4},v_{11},v_{12}\}$, $V(S_{4})=\{v_{2},v_{3},v_{4},v_{10},v_{13}\}$ and $V(S_{5})=\{v_{2},v_{3},v_{4},v_{5},v_{9}\}$. Clearly,  $A=\{v_{2},v_{3},v_{4}\}$ is the inner hub of $G$.}\label{fig:13} 
\end{center}
\end{figure}
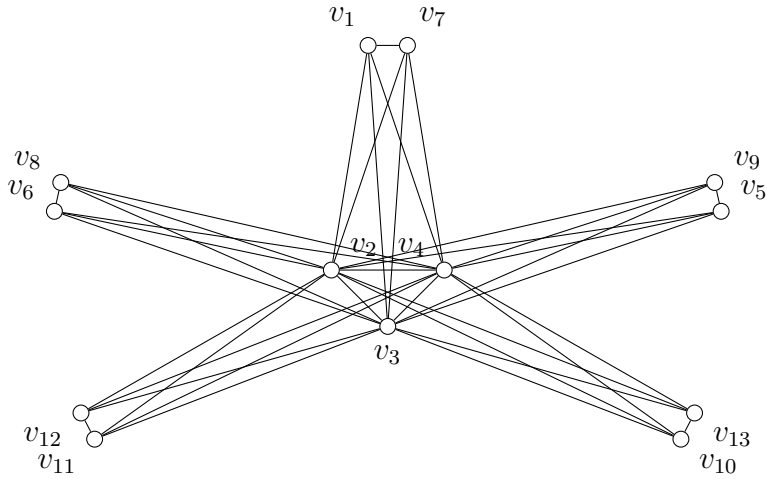   

\item Let $G$ be a graph tree with $V(G)=\{v_{1}, \ldots, v_{m}\}$ and $\mathcal{D}=\{s\in \{1, \ldots, m\}: |N(v_s)|=1\}$. Assume $\mu_{s}$ is absolutely continuous for every $s\in \{2, \ldots, m\} \setminus \mathcal{D}$. Monge solutions for these graphs could be easily deduced by adapting the reasoning presented in Example \ref{eqn:1.2-54}; the solution will be the composition of optimal maps for two marginal problems along any path.   Alternatively, these can be seen as special cases of Proposition \ref{Proposition 3}. 
\begin{figure}[H]   
\begin{center}
\begin{subfigure}[b]{0.4\linewidth}
\begin{tikzpicture}[x=1.5cm, y=1.5cm]
	\vertex (v1) at (60:1) [label=60:$v_{1}$]{};
	\vertex (v3) at (180:1) [label=180:$v_{3}$]{};
	\vertex (v4) at (240:1) [label=240:$v_{4}$]{};
	\vertex (v5) at (300:1) [label=300:$v_{5}$]{};
	\vertex (v6) at (360:1) [label=360:$v_{6}$]{};
	\vertex (v7) at (360:0) [label=5:$v_{7}$]{};
	\path 
		(v7) edge (v1)
		(v7) edge (v3)
		(v1) edge (v6)
		(v6) edge (v5)
		(v5) edge (v4)
		;
\end{tikzpicture}
\caption{Path with vertex sequence $(x_{3},x_{7},x_{1},x_{6},x_{5},x_{4})$. Here, we need regularity conditions on $\mu_{1}, \mu_{5},\mu_{6},\mu_{7}$.} \label{Path} 
  \end{subfigure}
  \qquad \qquad
\begin{subfigure}[b]{0.4\linewidth}
\begin{tikzpicture}[x=1.5cm, y=1.5cm]
	\vertex (v1) at (60:1) [label=60:$v_{1}$]{};
	\vertex (v2) at (120:1) [label=120:$v_{2}$]{};
	\vertex (v3) at (180:1) [label=180:$v_{3}$]{};
	\vertex (v4) at (240:1) [label=240:$v_{4}$]{};
	\vertex (v5) at (300:1) [label=300:$v_{5}$]{};
	\vertex (v6) at (360:1) [label=360:$v_{6}$]{};
	\vertex (v7) at (360:0) [label=5:$v_{7}$]{};
	\vertex (v8) at (360:2) [label=360:$v_{8}$]{};
	\vertex (v9) at (30:2) [label=30:$v_{9}$]{};
	\vertex (v10) at (340:3) [label=340:$v_{10}$]{};
	\path 
		(v7) edge (v1)
		(v7) edge (v3)
		(v1) edge (v6)
		(v6) edge (v5)
		(v5) edge (v4)
		(v1) edge (v2)
		(v6) edge (v9)
		(v9) edge (v8)
		(v8) edge (v10)
		;
\end{tikzpicture}
\caption{Here, we need regularity conditions on $\mu_{k}$, for every $k\in \{5,6,7,8,9\}$.}\label{Tree}
\end{subfigure}
\caption{}
\end{center}
\end{figure}
\end{enumerate}
\end{examples}

\section{ Uniqueness}
Here, we include  a standard argument, showing that in situations where all solutions are of Monge type, the solution to \eqref{KP} must be unique.
	
	\begin{corollary}
		Under the hypotheses in any of Theorem \ref{Theorem 2}, Theorem \ref{Theorem 1}, Proposition \ref{Proposition 2} or Proposition \ref{Proposition 3}, the solution to the Kantorovich problem \eqref{KP} is unique.  
	\end{corollary}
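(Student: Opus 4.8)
The plan is to run the classical convexity argument that upgrades ``every solution is of Monge type'' to ``the solution is unique''. Let $\gamma_1$ and $\gamma_2$ be two solutions of \eqref{KP}; by the hypotheses of any of Theorem~\ref{Theorem 2}, Theorem~\ref{Theorem 1}, Proposition~\ref{Proposition 2} or Proposition~\ref{Proposition 3}, each is concentrated on the graph of a measurable map from $X_1$, say $\gamma_1 = (\mathrm{Id}, T_2^1, \ldots, T_m^1)_\sharp \mu_1$ and $\gamma_2 = (\mathrm{Id}, T_2^2, \ldots, T_m^2)_\sharp \mu_1$. The key observation is that the set of optimal plans is convex, so $\gamma := \tfrac12(\gamma_1 + \gamma_2)$ is again a solution of \eqref{KP}, and hence, again by the same theorem/proposition, $\gamma$ is also concentrated on the graph of a measurable map $x_1 \mapsto (T_2(x_1), \ldots, T_m(x_1))$.

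The main step is then to compare these representations. Since $\gamma = \tfrac12(\gamma_1+\gamma_2)$ projects to $\mu_1$ in the first marginal, disintegrating all three measures with respect to $\mu_1$ gives, for $\mu_1$-a.e.\ $x_1$, that the disintegration $\gamma_{x_1}$ equals $\tfrac12(\delta_{(T_2^1(x_1),\ldots,T_m^1(x_1))} + \delta_{(T_2^2(x_1),\ldots,T_m^2(x_1))})$. But $\gamma$ being induced by a map means $\gamma_{x_1}$ is a single Dirac mass for $\mu_1$-a.e.\ $x_1$. A convex combination $\tfrac12(\delta_a + \delta_b)$ is a Dirac mass if and only if $a = b$, so $(T_2^1(x_1), \ldots, T_m^1(x_1)) = (T_2^2(x_1), \ldots, T_m^2(x_1))$ for $\mu_1$-a.e.\ $x_1$. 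Therefore $\gamma_1 = \gamma_2$, which proves uniqueness.

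I do not expect any serious obstacle here; the only points requiring a line of justification are that the set of optimizers of the linear functional \eqref{KP} over the (convex) set of admissible plans is convex (immediate, since \eqref{KP} is linear in $\mu$ and the marginal constraints are affine), and that the existence/disintegration machinery applies under the stated compactness assumption on the $X_i$, which is already invoked in Theorem~2.1. The argument is entirely standard — it is exactly the reasoning used, e.g., in \cite{Gangbo}, \cite{Kim}, \cite{Pass4}, \cite{Pass5} — so the write-up will be short, and the body of the work has already been done in establishing the Monge property in Sections \ref{Section 3} and \ref{Section 4}.
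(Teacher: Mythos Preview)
Your proof is correct and follows essentially the same approach as the paper: take two optimizers, average them, apply the Monge result to the average, and conclude the two maps coincide $\mu_1$-a.e. The only cosmetic difference is that you phrase the last step via disintegration of $\gamma$ over $\mu_1$, whereas the paper simply observes that the support of $\tfrac12(\gamma_0+\gamma_1)$ is the union of the two graphs, which can be a single graph only if $T_0=T_1$ $\mu_1$-a.e.
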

\begin{proof}
	If there are two such solutions, $\gamma_0$ and $\gamma_1$, linearity of the Kantorovich functional implies that their interpolant $\gamma_{1/2} =\frac{1}{2}\gamma_0 +\frac{1}{2}\gamma_1$ is also a solution;	under any of the collections of hypotheses listed in the statement of the corollary, the corresponding result then asserts that each of $\gamma_0, \gamma_1$ and $\gamma_{1/2}$  must concentrate on the graph of a function.  This is clearly not possible, as if $\gamma_0, \gamma_1$ concentrate on the graphs of $T_0$ and $T_1$, respectively, $\gamma_{1/2}$ concentrates on the union of these two graphs, which is itself a single graph only if $T_0=T_1$ $\mu_1$ almost everywhere, in which case $\gamma_0=\gamma_1$.
\end{proof}
\section{Discussion and negative examples}
This paper has identified a wide class of graphs leading to Monge solution and uniqueness results in the multi-marginal optimal transport problem \eqref{MP} with corresponding surplus \eqref{Main cost}, under appropriate conditions on the marginals; see Theorems \ref{Theorem 2} and \ref{Theorem 1} as well as Propositions \ref{Proposition 2} and \ref{Proposition 3}.   To the best of our knowledge, such results are not known for any graph which is not covered here. Furthermore,  Part 2 of Proposition 2.1 verifies that the extra regularity conditions on the marginals imposed here are necessary in order to obtain Monge solution and uniqueness results. 

There are many graphs to which none of Theorem \ref{Theorem 2}, Theorem \ref{Theorem 1}, Proposition \ref{Proposition 2} or \ref{Proposition 3} apply, and for most of these we do not know whether or not Monge solution and uniqueness results might hold, assuming for simplicity that all the marginals are absolutely continuous.  A notable exception to this is the cycle graph for $m\geq 5$ (see Figure 2-(b) for the case $m=7$); in a recent work \cite{Pass5}, we showed the existence of absolutely continuous marginals generating non-Monge solutions for the corresponding surplus \eqref{cycle cost}.  For illustrative purposes, we close by mentioning a class of graphs falling outside the scope of this paper, for which Monge solution and uniqueness remain completely open. For this, recall that for graphs $G_1$ and $G_2$ with disjoint vertex sets $V_1$ and $V_2$, the \emph{graph join} $G_1 +G_2$ is defined as the graph union $G_1 \cup G_2$ together with all edges joining vertices in $V_1$ with vertices in $V_2$. Also, for any graph $G$, the \textit{graph complement} (denoted $\overline{G}$) is the graph with vertices $V(G)$ and set of edges $E(\overline{G})=\left\lbrace \{v, w\}:v,w\in V(G)\;\;\text{and}\;\;\{v, w\}\notin E(G)\right\rbrace$.
\begin{definition}
Let $P_n$ be a path with $n$ vertices and $\overline{C_k}$ the complement of the complete graph with $k$ vertices $C_k$ (so $N_{\overline{C_k}}(v)=\emptyset$ for every $v\in V(\overline{C_k})$).  The fan graph $F_{k,n}$ is defined as the graph $ \overline{C_k} + P_n$. 
\end{definition}
\begin{example}
Let us illustrate the above definition with some basics examples.
\begin{itemize}
\item The graphs $F_{1,1}$ and $F_{1,2}$ reduce to complete graphs with two and three vertices respectively. 
\item The graph $F_{1,3}$ reduces to the extraction of the graph consisting of only one edge from the complete graph $C_4$.
\item The graphs $F_{1,6}$ and $F_{2,5}$, where for $F_{1,6}$  we denote the only vertex of $C_1$ as $v_1$, and for $F_{2,5}$ we denote the vertices of $C_2$  as $v_1$ and $v_7$. See figures below
\begin{figure}[H]   
\begin{center}
\begin{subfigure}[b]{0.4\linewidth}
\begin{tikzpicture}[x=1.5cm, y=1.5cm]
		\vertex (v1) at (51:1) [label=51:$v_1$]{};
		\vertex (v2) at (103:1) [label=103:$v_2$]{};
		\vertex (v3) at (154:1) [label=154:$v_3$]{};
		\vertex (v4) at (206:1) [label=206:$v_4$]{};
		\vertex (v5) at (257:1) [label=257:$v_5$]{};
		\vertex (v6) at (309:1) [label=309:$v_6$]{};
		\vertex (v7) at (360:1) [label=360:$v_7$]{};
		\path 
		
		(v1) edge (v6)
		(v1) edge (v4)
		(v6) edge (v7)
		(v7) edge (v1)
		(v5) edge (v1)
		(v5) edge (v4)
		(v4) edge (v3)
		(v2) edge (v3)
		(v1) edge (v2)
		(v1) edge (v3)
		(v5) edge (v6)
		;
		\end{tikzpicture}
\caption{$F_{1,6}$} 
  \end{subfigure}
  \qquad \qquad
\begin{subfigure}[b]{0.4\linewidth}
\begin{tikzpicture}[x=1.5cm, y=1.5cm]
		\vertex (v7) at (51:1) [label=51:$v_7$]{};
		\vertex (v2) at (103:1) [label=103:$v_2$]{};
		\vertex (v3) at (154:1) [label=154:$v_3$]{};
		\vertex (v4) at (206:1) [label=206:$v_4$]{};
		\vertex (v5) at (257:1) [label=257:$v_5$]{};
		\vertex (v6) at (309:1) [label=309:$v_6$]{};
		\vertex (v1) at (360:1) [label=360:$v_1$]{};
		\path 
		
		(v7) edge (v6)
		(v7) edge (v4)
		(v6) edge (v1)
		(v5) edge (v7)
		(v5) edge (v4)
		(v4) edge (v3)
		(v2) edge (v3)
		(v7) edge (v2)
		(v7) edge (v3)
		(v5) edge (v6)
		(v2) edge (v1)
		(v3) edge (v1)
		(v4) edge (v1)
		(v5) edge (v1)
		;
		\end{tikzpicture}
\caption{$F_{2,5}$}
\end{subfigure}
\caption{}
\end{center}
\end{figure}
\end{itemize}
\end{example}
\begin{proposition}\label{Proposition 6.1}
Let $F_{k,n}$ be a fan graph.
\begin{enumerate}
\item If $n\geq 4$, then $F_{k,n}$ does not belong to the class of graphs in Theorem \ref{Theorem 2}, Theorem \ref{Theorem 1}, Proposition \ref{Proposition 2} or Proposition \ref{Proposition 3}.
 \item If $n< 4$, then $F_{k,n}$ belongs to the class of graphs considered in Theorem \ref{Theorem 2}.
\end{enumerate}
\end{proposition}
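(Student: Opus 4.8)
The plan is to handle the two structural descriptions separately: for $n<4$ I exhibit $F_{k,n}$ as an extraction $C_m\setminus S$ with $S$ having an inner hub (so Theorem \ref{Theorem 2} applies), and for $n\geq 4$ I show that none of the four constructions can produce $F_{k,n}$. Write the vertices of $F_{k,n}=\overline{C_k}+P_n$ as $h_1,\dots,h_k$ (the vertices of $\overline{C_k}$, pairwise non-adjacent) and $p_1-p_2-\cdots-p_n$ (the path $P_n$), so each $h_j$ is adjacent to every $p_i$ and $m=k+n$. Since the $h_j$ are independent and $P_n$ is triangle-free, for $n\geq 2$ the maximal cliques of $F_{k,n}$ are exactly the triangles $T_{j,i}:=\{h_j,p_i,p_{i+1}\}$, $j\in\{1,\dots,k\}$, $i\in\{1,\dots,n-1\}$; also, the complement of $F_{k,n}$ inside the complete graph $C_m$ is the disjoint union of the complete subgraph on $\{h_1,\dots,h_k\}$ with the path-complement $\overline{P_n}$ on $\{p_1,\dots,p_n\}$.

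For Part (2), suppose $n\leq 3$. If $n\leq 2$ then $\overline{P_n}$ has no edges, so $\overline{F_{k,n}}$ has the same edge set as the complete graph on $\{h_1,\dots,h_k\}$; taking $S$ to be that complete graph gives $C_m\setminus S=F_{k,n}$, and $S$ is a single maximal clique, hence has inner hub $V(S)$. If $n=3$ then $\overline{P_3}$ is the single edge $\{p_1,p_3\}$ together with the isolated vertex $p_2$; taking $S$ to be the disjoint union of the complete graph on $\{h_1,\dots,h_k\}$ with the edge $\{p_1,p_3\}$ again gives $C_m\setminus S=F_{k,n}$, and $S$ is a disjoint union of complete graphs, hence has inner hub $\emptyset$. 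In all three cases $G=F_{k,n}$ is connected and $V(G)\setminus V(S)$ is nonempty (it contains the single path vertex when $n=1$, the vertices $p_1,p_2$ when $n=2$, and $p_2$ when $n=3$), so labelling the vertices so that $v_1\in V(G)\setminus V(S)$ places $F_{k,n}$ within the scope of Theorem \ref{Theorem 2} via its hypothesis (i).

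For Part (1), let $n\geq 4$. The three maximal cliques $T_{1,1},T_{1,2},T_{1,3}$ of $F_{k,n}$ all exist, all contain $h_1$, and have pairwise intersections $\{h_1,p_2\}$, $\{h_1,p_3\}$ and $\{h_1\}$, which are not all equal. In particular $F_{k,n}$ has no inner hub, so Theorem \ref{Theorem 1} does not apply to it. These same three cliques also exclude Propositions \ref{Proposition 2} and \ref{Proposition 3}, granted the following lemma: \emph{if $G$ is obtained by gluing graphs with inner hubs as in Definition \ref{Def 2} under the hypotheses of Proposition \ref{Proposition 2} or \ref{Proposition 3}, then any three maximal cliques of $G$ sharing a common vertex have the same pairwise intersection.} Indeed, $T_{1,1},T_{1,2},T_{1,3}$ then witness that $F_{k,n}$ is not such a $G$. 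It remains to exclude Theorem \ref{Theorem 2}, which applies only to graphs $C_m\setminus S$ with $S$ having an inner hub. But $C_m\setminus S=F_{k,n}$ forces $E(S)=E(\overline{F_{k,n}})$, and enlarging $V(S)$ by isolated vertices only adds disjoint singleton maximal cliques and cannot help; so the relevant $S$ is the disjoint union of the complete graph on $\{h_1,\dots,h_k\}$ with $\overline{P_n}$. Since $n\geq 4$, the vertices $p_1,p_3,p_4$ induce in $\overline{P_n}$ the path $p_3-p_1-p_4$ (because $p_1p_3$ and $p_1p_4$ are non-edges and $p_3p_4$ an edge of $P_n$), so $\overline{P_n}$ has two distinct maximal cliques both containing $p_1$; together with the maximal clique $\{h_1,\dots,h_k\}$ of $S$, which is disjoint from every maximal clique contained in the $\overline{P_n}$-part, this yields both a pair of maximal cliques of $S$ with nonempty intersection and a pair with empty intersection, so $S$ has no inner hub. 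Hence Theorem \ref{Theorem 2} does not apply either, and Part (1) follows modulo the lemma.

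The main obstacle is proving the lemma. For a single graph $G$ with inner hub $A$ it is trivial: by definition of the inner hub, any two distinct maximal cliques of $G$ meet in exactly $A$. For a gluing $G_1\cup G_2$ of two inner-hub graphs along a common maximal clique $S$ with $V(G_1)\cap V(G_2)=V(S)$, one observes that every clique of $G_1\cup G_2$ lies within $V(G_1)$ or within $V(G_2)$ (a clique meeting both $V(G_1)\setminus V(S)$ and $V(G_2)\setminus V(S)$ would need an edge between those parts, which does not exist), so the maximal cliques of $G_1\cup G_2$ are exactly those of $G_1$ together with those of $G_2$, and a maximal clique $M\neq S$ of $G_i$ meets $V(S)$ in exactly $A_i$. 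Using $A_1\cap A_2=\emptyset$, it follows that three maximal cliques with a common vertex cannot be distributed between $G_1$ and $G_2$, hence all belong to a single $G_i$ and have all pairwise intersections equal to $A_i$. The general case is an induction on the number of glued pieces: one peels off a leaf of the tree $\mathcal{G}$ and checks that the leaf is attached to the remainder along a maximal clique whose vertex set is exactly the overlap; carrying this through, and in particular accommodating the ``$A_\lambda$-gluings'' allowed in Proposition \ref{Proposition 3}, is the technical heart of the proof.
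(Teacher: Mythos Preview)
Your treatment of Part~(2) and of the Theorem~\ref{Theorem 2} and Theorem~\ref{Theorem 1} exclusions in Part~(1) is correct and close in spirit to the paper's; the paper encapsulates the observation that $F_{1,n}$ has no inner hub as a separate Lemma (Lemma~\ref{Lemma 6.1}), which plays the role your three cliques $T_{1,1},T_{1,2},T_{1,3}$ play here.

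The genuine difference, and the gap, is in excluding Propositions~\ref{Proposition 2} and~\ref{Proposition 3}. You reduce this to a structural lemma (``any three maximal cliques of a glued graph sharing a common vertex have the same pairwise intersection''), and you candidly note that the inductive verification along the tree $\mathcal{G}$, especially across the $A_\lambda$-type overlaps permitted in Proposition~\ref{Proposition 3}, is not carried out. As written, then, the argument is incomplete: the lemma is plausible and your two-piece sketch is in the right direction, but the full induction (handling how a vertex of some $A_\alpha$ can migrate through shared cliques and through $A_\lambda$-intersections) is exactly the work you have not done.

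The paper bypasses your lemma entirely. Instead of proving a general fact about glued graphs, it fixes a hub vertex $v=h_j\in V(\overline{C_k})$ and exploits that $N_{F_{k,n}}(v)=V(P_n)$. If $F_{k,n}=\bigcup_\alpha G_\alpha$ as in Proposition~\ref{Proposition 3} and $v\in A_\beta$ for some $\beta$, then $V(G_\beta)\subseteq N(v)\cup\{v\}=V(P_n)\cup\{v\}$, forcing $G_\beta$ to sit inside a copy of $F_{1,n}$; since $G_\beta$ must have an inner hub, this contradicts Lemma~\ref{Lemma 6.1}. If instead $v$ lies in $V(S_\beta)\setminus A_\beta$ for a single maximal clique $S_\beta$ of $G_\beta$ (and in no other $A_\alpha$), then $V(S_\beta)=N(v)\cup\{v\}$, so $F_{1,n}$ would be complete, again contradicting Lemma~\ref{Lemma 6.1}. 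This argument is ad hoc to fan graphs but avoids the need for any general structure theory of glued graphs; if you want to keep your route, you must supply the full inductive proof of your lemma.
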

The proof of Part 1 of the above proposition will be divided into two cases. In both cases the next lemma  will be used during the proofs. 
\begin{lemma}\label{Lemma 6.1}
Assume $n\geq 4$. Then $F_{1,n}$  does not have an inner hub.
\end{lemma}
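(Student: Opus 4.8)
The plan is to describe the maximal cliques of $F_{1,n}$ explicitly and then exhibit two pairs of maximal cliques whose intersections are different sets; by Definition \ref{Def 1} this is precisely what it means for $F_{1,n}$ to fail to have an inner hub. Throughout, write $v_1$ for the single vertex coming from $\overline{C_1}$, and $w_1, \ldots, w_n$ for the vertices of $P_n$, listed so that $\{w_i, w_{i+1}\} \in E(P_n)$ for $1 \le i \le n-1$. By the definition of the graph join, $v_1$ is adjacent to every $w_i$, while the only edges among the $w_i$ are the path edges $\{w_i, w_{i+1}\}$.

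First I would classify the maximal cliques. The key elementary observation is that in a path no two vertices $w_i, w_j$ with $|i-j| \ge 2$ are adjacent, so any clique of $F_{1,n}$ contains at most two of the $w_i$, and if it contains two of them they must be consecutive. Hence every clique of $F_{1,n}$ is a subgraph of the complete subgraph induced by some vertex set of the form $\{v_1, w_i, w_{i+1}\}$, and since each such triple does induce a complete subgraph (because $v_1$ is adjacent to everything), the maximal cliques of $F_{1,n}$ are exactly the triangles $S_i$ on $\{v_1, w_i, w_{i+1}\}$, $i = 1, \ldots, n-1$. This is the only step where any verification is genuinely needed, and I would write it out carefully; since $n \ge 4$ forces $n - 1 \ge 3$, there are at least three distinct maximal cliques, so in particular $S_1, S_2, S_3$ all exist.

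It then remains only to compare intersections: $V(S_1) \cap V(S_2) = \{v_1, w_1, w_2\} \cap \{v_1, w_2, w_3\} = \{v_1, w_2\}$, while $V(S_1) \cap V(S_3) = \{v_1, w_1, w_2\} \cap \{v_1, w_3, w_4\} = \{v_1\}$, since $\{w_1, w_2\} \cap \{w_3, w_4\} = \emptyset$; this last intersection is where the hypothesis $n \ge 4$ is used, to guarantee that $w_4$ exists and $S_3$ is a genuine maximal clique. As $\{v_1, w_2\} \ne \{v_1\}$, there is no set $A$ with $A = V(S_j) \cap V(S_k)$ simultaneously for all $j \ne k$; hence $F_{1,n}$ has no inner hub.

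I do not anticipate any real obstacle: the whole argument is combinatorial, and the only part deserving a careful write-up is the classification of maximal cliques, which is short. The one subtlety is simply to keep track of which $S_i$ exist — exactly the point at which $n \ge 4$ (rather than $n \ge 3$) matters, noting that $F_{1,3}$, having only two maximal cliques, does in fact possess an inner hub.
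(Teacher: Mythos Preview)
Your proof is correct and takes a genuinely different route from the paper's. You classify the maximal cliques of $F_{1,n}$ explicitly as the triangles $S_i$ on $\{v_1, w_i, w_{i+1}\}$ and then directly exhibit two pairs of maximal cliques with distinct intersections, which immediately violates Definition~\ref{Def 1}. The paper instead argues by contradiction at a more structural level: assuming an inner hub exists, connectedness forces it to be nonempty, and any hub vertex must be adjacent to all others; the only such vertex is the apex $v_1$, so $A=\{v_1\}$, which would force $P_n$ to be a disjoint union of complete graphs --- impossible for a connected path with $n\ge 3$. Your approach is more concrete and makes the role of $n\ge 4$ completely transparent (you need a third maximal clique $S_3$, and you correctly note that $F_{1,3}$ does have an inner hub). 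The paper's approach, on the other hand, avoids enumerating cliques and therefore generalizes more readily to the abstract setting of Lemma~\ref{lem: general negative examples}, where the path is replaced by an arbitrary connected $G$ with no universal vertex.
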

\begin{proof}
 Assume $F_{1,n}$ has an inner hub. Since $F_{1,n}$ is  connected, every vertex in the nonempty hub is adjacent to all the other vertices. Now, the only vertex of $F_{1,n}$ satisfying this property is the vertex of $\overline{C_1}=C_1$ (so $V(C_1)$ is the hub of $F_{1,n}$).  This implies by definition of inner hub that $P_n$ is complete or it is the disjoint union of complete graphs. This is a contradiction as  $n> 2$ (so $P_n$ can not be complete)   and it is connected, completing the proof of the lemma.
\end{proof}
\begin{proof}[Proof of Proposition \ref{Proposition 6.1}]
Since Proposition \ref{Proposition 3} generalizes Theorem \ref{Theorem 1} and Proposition \ref{Proposition 2}, it suffices to prove Part 1 for Theorem \ref{Theorem 2} and Proposition \ref{Proposition 3}. For this, we set $m=k+n$ and consider two cases.
\begin{enumerate}[label=\textbf{Case \arabic*.}]
\item Assume $k=1$.  If $F_{1,n}=C_m\setminus S$ for some subgraph $S$ of $C_m$, then $S=\overline{P_n}$ or $S=C_1 \cup \overline{P_n} $. Since  $n\geq 4$, $\overline{P_n} $ is connected and there is not a vertex in $V(\overline{P_n})$  adjacent to all the other vertices; that is, $\overline{P_n}$ can not have an inner hub. Also, the only way that the disconnected graph $C_1 \cup \overline{P_n} $ has an inner hub is when $\overline{P_n} $ is complete (as it is connected), which is clearly not the case. Hence, the structure of $F_{1,n}$ does not correspond to the graphs considered in Theorem \ref{Theorem 2}. On the other hand, note that the vertex of $\overline{C_1}=C_1$ is connected to all the other vertices of $F_{1,n}$, and the only case in Proposition \ref{Proposition 3} where a vertex of a graph $\bigcup_{\alpha=1}^{l}G_{\alpha}$ ( where $\{G_\alpha\}_{\alpha=1}^{l}$ is a collection of graphs with inner hubs $A_{\alpha}$ satisfying the conditions  in Proposition \ref{Proposition 3})   satisfies this condition is when $l=1$; that is, if there exists one of these collections satisfying   $\bigcup_{\alpha=1}^{l}G_{\alpha}=F_{1,n}$, then $F_{1,n}$ would be a graph with an inner hub,   contradicting Lemma \ref{Lemma 6.1}.  This proves that $F_{1,n}$ does not belong to the class of graphs in Proposition \ref{Proposition 3}, completing the proof of Case 1.
\item Assume $k\geqslant 2$. If $F_{k,n}=C_m\setminus S$ for some subgraph $S$ of $C_m$, then  $S=C_k \cup \overline{P_n} $. Note that $S$ is disconnected with connected components $C_k$ and $\overline{P_n} $ (as $n\geqslant 4)$, so if  $S$ has inner hub then it must be empty, which implies $\overline{P_n}$ is complete. This clearly is not possible as $n>1$. Hence,  $F_{k,n}$ does not belong to the class of graphs in Theorem \ref{Theorem 2}. For the other part of the assertion, consider $\{G_\alpha\}_{\alpha=1}^{l}$ a collection of graphs with inner hubs $A_{\alpha}$ satisfying the conditions imposed in Proposition \ref{Proposition 3} and assume $F_{k,n}=\bigcup_{\alpha=1}^{l}G_{\alpha}$. Fix any vertex $v$ in $V(\overline{C_k})\subseteq V(F_{k,n})$, then  there exists $\beta$ such that $v\in A_{\beta}$ or $v\in V(S_{\beta})\setminus A_{\beta}$ for some maximal clique $S_{\beta}$ of $G_{\beta}$. If $v\in A_{\beta}$, then 
\begin{flalign*}
V(G_{\beta})& =\left(  V(G_{\beta})\setminus \{v\}\right) \cup \{v\}\\
& = N_{\bigcup_{\alpha=1}^{l}G_{\alpha}}(v) \cup \{v\}\nonumber\\
    &= N_{F_{k,n}}(v)\cup \{v\}\nonumber\\
    &= V(P_n)\cup \{v\}\qquad\qquad \text{ as $v\in V(\overline{C_k})$}\nonumber\\
\end{flalign*} 
This implies that  $G_{\beta}=P_n \cup K_{v, V(P_n)}$ where  $ K_{v, V(P_n)}$ is a bi-partite graph with set partition $\{ \{v\},V(P_n)\}$ (alternatively we can interpret it as a star graph with "center"  $v$); that is, $G_{\beta}$ is a graph of the form $F_{1,n}$ having an inner hub. This is a contradiction by Lemma \ref{Lemma 6.1}. This proves that $F_{k,n}$ does not satisfy the graph structure condition in Proposition \ref{Proposition 3}.
 Now, assume $v\in V(S_{\beta})\setminus A_{\beta}$ and without lost of generality assume $v\notin A_{\alpha}$ for any $\alpha \neq \beta$ (otherwise we apply the same arguments as in the case $v\in A_{\beta}$ above),  then $V(S_{\beta})=N_{F_{k,n}}(v) \cup \{v\}=V(P_n)\cup \{v\}$; that is, $P_n \cup K_{v, V(P_n)}$ is a complete graph (so it has inner hub $V(P_n \cup K_{v, V(P_n)}))$, contradicting Lemma \ref{Lemma 6.1}. Hence, $F_{k,n}$ does not belong to the class of graphs in Proposition \ref{Proposition 3}, completing the proof of Part 1.
\end{enumerate}
To prove Part 2, note that if $n\in \{1,2,3\}$ the graph $\overline{P_n}$ can be trivially expressed as a union of disjoint complete graphs, so $C_{k}\cup \overline{P_n}$ is a disjoint union of complete graphs and can be interpreted as a graph with empty inner hub. Since $F_{k,n}=C_{m}\setminus (C_{k}\cup \overline{P_n})$, we immediately conclude that $F_{k,n}$ belong to the class of graphs in Theorem \ref{Theorem 2}. This completes the Proof of part 2.
\end{proof}
 We note that the essential ideas in the proposition above can in fact be adapted to a more abstract class of graphs.  The next lemma describes such a class, which therefore also falls outside the scope of the results in this paper and for which the Monge solution and uniqueness questions remain open. 
\begin{lemma}\label{lem: general negative examples}
Let $G$ be a connected graph satisfying $N_{G}(v)\cup \{v\}\neq V(G)$, for all $v\in V(G)$ and consider the graph $\mathcal{F}_{k, G}:= \overline{C_k} + G$. 
Then  $\mathcal{F}_{k, G}$  does not belong to the classes of graph considered in  Theorem \ref{Theorem 2} and Proposition \ref{Proposition 3}.
\end{lemma}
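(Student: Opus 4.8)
The plan is to mirror the two-case argument used in the proof of Proposition \ref{Proposition 6.1}, replacing the path $P_n$ by the general connected graph $G$ with the property $N_G(v)\cup\{v\}\neq V(G)$ for all $v$. The key structural observation is that this property is exactly the abstract feature of $P_n$ (for $n\geq 4$) that made the earlier proof work: in $P_n$ no vertex is adjacent to all the others, and the same hypothesis on $G$ guarantees $G$ itself is not complete, and moreover has no universal vertex. First I would record two easy consequences: (a) $\mathcal{F}_{k,G}=\overline{C_k}+G$ is connected, and (b) the set of vertices of $\mathcal{F}_{k,G}$ adjacent to all other vertices is precisely $V(\overline{C_k})$ — indeed a vertex $v$ of $\overline{C_k}$ is joined to every vertex of $G$ and to nothing in $\overline{C_k}$ except... wait, $\overline{C_k}$ has no edges, so $v\in V(\overline{C_k})$ is adjacent to all vertices of $G$ but to no other vertex of $\overline{C_k}$; hence if $k\geq 2$ no vertex of $\overline{C_k}$ is universal either, while if $k=1$ the single vertex of $\overline{C_1}$ is the unique universal vertex. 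Meanwhile a vertex $w\in V(G)$ has $N_{\mathcal{F}_{k,G}}(w)=N_G(w)\cup V(\overline{C_k})$, which by hypothesis is never all of $V(\mathcal{F}_{k,G})\setminus\{w\}$ when $k=1$ (since $N_G(w)\cup\{w\}\neq V(G)$), and obviously not when $k\geq 2$. So $\mathcal{F}_{1,G}$ has exactly one universal vertex and $\mathcal{F}_{k,G}$ with $k\geq 2$ has none.

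Next I would handle the Proposition \ref{Proposition 3} membership question, which (since Proposition \ref{Proposition 3} subsumes Theorem \ref{Theorem 1} and Proposition \ref{Proposition 2}) is the substantive half. Suppose $\mathcal{F}_{k,G}=\bigcup_{\alpha=1}^l G_\alpha$ for some collection of graphs with inner hubs $A_\alpha$ satisfying the hypotheses of Proposition \ref{Proposition 3}. Pick any vertex $v\in V(\overline{C_k})$ and locate it: either $v\in A_\beta$ for some $\beta$, or $v\in V(S_\beta)\setminus A_\beta$ for some maximal clique $S_\beta$ of some $G_\beta$ with $v\notin A_\alpha$ for all $\alpha\neq\beta$. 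In the first case, exactly as in the proof of Proposition \ref{Proposition 6.1}, $V(G_\beta)=N_{\mathcal{F}_{k,G}}(v)\cup\{v\}=V(G)\cup\{v\}$ (using that $v$ is adjacent to all of $V(G)$ and to nothing else); hence $G_\beta=\overline{C_1}+G=\mathcal{F}_{1,G}$, which by the first paragraph has exactly one universal vertex, namely $v$, and so cannot have an inner hub unless $G$ is complete or a disjoint union of completes (I would prove this as a standalone claim: if $H=\overline{C_1}+G$ with $G$ connected non-complete has an inner hub $A$, then $A=\{v\}$ forces every maximal clique to contain $v$, so $H\setminus\{v\}=G$ must be a disjoint union of cliques, contradicting connectedness plus non-completeness). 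This contradiction rules out $v\in A_\beta$. In the second case, $v\in V(S_\beta)\setminus A_\beta$ with $v$ in no other hub: then $V(S_\beta)\subseteq N_{\mathcal{F}_{k,G}}(v)\cup\{v\}=V(G)\cup\{v\}$, and since $S_\beta$ is a clique containing $v$ which is adjacent to all of $V(G)$, maximality forces $V(S_\beta)=V(G)\cup\{v\}$, so $G$ would have to be complete — contradiction. Either way we contradict the hypothesis on $G$, so $\mathcal{F}_{k,G}$ is not in the class of Proposition \ref{Proposition 3}.

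For the Theorem \ref{Theorem 2} membership question: suppose $\mathcal{F}_{k,G}=C_m\setminus S$ for a subgraph $S$ of $C_m$ (on $m=k+|V(G)|$ vertices) whose maximal cliques have inner hub $A$. Here $C_m\setminus S$ is the complement of $S$ within $C_m$, so $S=\overline{\mathcal{F}_{k,G}}=\overline{\overline{C_k}+G}$. Now I would compute this complement: the complement of a join is the disjoint union of the complements, $\overline{G_1+G_2}=\overline{G_1}\sqcup\overline{G_2}$, so $S=\overline{\overline{C_k}}\sqcup\overline{G}=C_k\sqcup\overline{G}$. For $S$ (possibly disconnected) to have an inner hub $A$, either $A\neq\emptyset$, which forces $S$ to be connected — impossible since $S$ has $C_k$ (when $k\geq 1$) and $\overline{G}$ as separate pieces and $G$ non-complete makes $\overline G$ nonempty on the $V(G)$ side, so $S$ is genuinely disconnected when $k\geq 1$ and $G$ has at least one edge — or $A=\emptyset$, which by the remark after Definition \ref{Def 1} requires every connected component of $S$ to be complete; but $\overline{G}$ is not a disjoint union of cliques, because $\overline{\overline{G}}=G$ is connected, hence $\overline G$'s components being cliques would force $G$ to be a complete multipartite graph, and in particular $G$ would have a universal vertex only if... actually the cleanest route: $\overline G$ is a disjoint union of cliques iff $G$ is complete multipartite, and a connected complete multipartite graph on $\geq 2$ parts has every vertex universal to at least... no — I would instead argue directly: if $\overline G$ is a disjoint union of cliques then each clique's vertices are pairwise non-adjacent in $G$, and two vertices in different cliques are adjacent in $G$; pick any vertex $v$ of $G$ in a singleton clique of $\overline G$ if one exists, then $v$ is universal in $G$, contradicting the hypothesis; if no singleton cliques exist then every clique of $\overline G$ has $\geq 2$ vertices and one checks $N_G(v)\cup\{v\}$ omits the other vertices of $v$'s clique, which is consistent — so I need a sharper argument, namely that a disjoint-union-of-cliques $\overline G$ with all cliques of size $\geq 2$ forces $G$ to be a complete multipartite graph in which $N_G(v)\cup\{v\}$ does equal $V(G)$ exactly when $v$'s part is a singleton, so the obstruction is clean only if I also use connectedness of $G$ to rule out $\overline G$ being a single clique (which would make $G$ edgeless, not connected unless $|V(G)|=1$, excluded since then $N_G(v)\cup\{v\}=V(G)$). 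I expect this last sub-case — pinning down exactly why $\overline G$ cannot be a disjoint union of cliques under the hypothesis $N_G(v)\cup\{v\}\neq V(G)$ for all $v$ together with connectedness — to be the main obstacle; I believe the resolution is that $\overline G$ being a disjoint union of $\geq 2$ cliques makes $G$ a complete multipartite graph with $\geq 2$ parts, every such graph being connected with at least... hmm, the honest fix is: if $\overline G$ is one clique, $G$ is edgeless hence disconnected (as $|V(G)|\geq 2$), contradiction; if $\overline G$ is $\geq 2$ cliques, $G$ is complete multipartite, and then $G$ has a universal vertex iff some part is a singleton — so to finish I would simply add the hypothesis's contrapositive is not quite enough and instead note complete multipartite graphs with all parts of size $\geq 2$ are precisely the cocktail-party-type graphs, which are covered by Theorem \ref{Theorem 2} — so actually such $\mathcal{F}_{k,G}$ WOULD be in Theorem \ref{Theorem 2}'s class, meaning the lemma as I've stated the hypothesis needs $G$ connected AND not complete multipartite, or the lemma's conclusion should be phrased for the Proposition \ref{Proposition 3} class primarily; I would flag this to the authors and, for the writeup, prove the Proposition \ref{Proposition 3} exclusion in full (paragraph two above, which is airtight) and the Theorem \ref{Theorem 2} exclusion under the understanding that $\overline G$ is genuinely not a disjoint union of cliques, which holds whenever $G$ is connected and has an induced $P_3$ — a condition satisfied by $P_n$, $n\geq 4$, and which I would state as the operative hypothesis.
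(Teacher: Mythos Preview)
Your argument for the Proposition \ref{Proposition 3} exclusion (your second paragraph) is essentially the paper's own approach: it mimics the case analysis from Proposition \ref{Proposition 6.1}, locating a vertex $v\in V(\overline{C_k})$ either in some hub $A_\beta$ or in some $V(S_\beta)\setminus A_\beta$, and deriving a contradiction from the hypothesis on $G$ in each case. One small correction: in the second case you invoke ``maximality'' to conclude $V(S_\beta)=V(G)\cup\{v\}$, but the real reason is structural --- under the Proposition \ref{Proposition 3} hypotheses, a vertex in $V(S_\beta)\setminus A_\beta$ that lies in no other hub has all its neighbours inside $S_\beta$, so $N(v)=V(S_\beta)\setminus\{v\}$ directly.

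Your hesitation about the Theorem \ref{Theorem 2} exclusion is well placed, and in fact you have uncovered a genuine gap in the paper's own proof. The paper argues: $G$ connected implies $\overline{G}$ has no universal vertex, hence $\overline{G}$ has no inner hub, hence $\overline{G}$ is not a disjoint union of complete graphs. The first step is correct, but the second is not: a disjoint union of two or more cliques has no universal vertex yet \emph{does} have an inner hub (the empty one). Your candidate counterexample is real. Take $G=K_{2,2}$: it is connected and satisfies $N_G(v)\cup\{v\}\neq V(G)$ for every $v$, but $\overline{G}=K_2\sqcup K_2$, so $S=C_k\sqcup K_2\sqcup K_2$ has empty inner hub and $\mathcal{F}_{k,K_{2,2}}=C_{k+4}\setminus S$ \emph{is} covered by Theorem \ref{Theorem 2}. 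More generally, any complete multipartite $G$ with all parts of size at least $2$ meets the stated hypotheses yet yields $\mathcal{F}_{k,G}$ in the Theorem \ref{Theorem 2} class. So the lemma as stated is false for the Theorem \ref{Theorem 2} half; the natural fix is exactly what you propose --- add the hypothesis that $\overline{G}$ is not a disjoint union of cliques (equivalently, $G$ is not complete multipartite), which for the motivating case $G=P_n$, $n\geq 4$, is automatic.
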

\begin{proof}
Note that the condition $N_{G}(v)\cup \{v\}\neq V(G)$, for all $v\in V(G)$ implies that $G$ does not have an inner hub (there is not a vertex in $V(G)$ adjacent to all the other vertices). Also, since $G$ is connected, $N_{\overline{G}}(v)\cup \{v\}\neq V(\overline{G})$ for all $v\in V(\overline{G})=V(G)$, so  $\overline{G}$ also has no inner hub. In particular, $G$ and $\overline{G}$ are not complete and can not be expressed as a disjoint union of complete graphs. Knowing this, it is not hard to follow the arguments of Lemma \ref{Lemma 6.1} to prove that $\mathcal{F}_{1, G}$ does not have inner hub, and then, by mimicking the proof of the above proposition the proof is completed.
\end{proof}
Lemma \ref{lem: general negative examples} allows one to construct many graphs for which Monge solution and uniqueness results are not known, with more adhoc structure than the fan graphs considered above.  One such possibility is illustrated in the figure below.
\begin{figure}[H]   
\begin{center}
\begin{tikzpicture}[x=1.5cm, y=1.5cm]
		\vertex (v7) at (51:1) [label=51:$v_7$]{};
		\vertex (v2) at (103:1) [label=103:$v_2$]{};
		\vertex (v3) at (154:1) [label=154:$v_3$]{};
		\vertex (v4) at (206:1) [label=206:$v_4$]{};
		\vertex (v5) at (257:1) [label=257:$v_5$]{};
		\vertex (v6) at (309:1) [label=309:$v_6$]{};
		\vertex (v1) at (360:1) [label=360:$v_1$]{};
		\path 
		
		(v7) edge (v6)
		(v7) edge (v4)
		(v6) edge (v1)
		(v5) edge (v7)
		(v5) edge (v4)
		(v4) edge (v3)
		(v2) edge (v3)
		(v7) edge (v2)
		(v7) edge (v3)
		(v5) edge (v6)
		(v2) edge (v1)
		(v3) edge (v1)
		(v4) edge (v1)
		(v5) edge (v1)
		(v2) edge (v6)
		;
		\end{tikzpicture}
\end{center}
\end{figure}

\end{document}